\def\S{\mathbb S}
\def\R{\mathbb R}
\def\L{\textup{Lk}}
\def\c{\mathsf \Lambda}
\def\s{\sigma}
\newtheorem{theorem}{Theorem}[section]
\newtheorem{lemma}[theorem]{Lemma}
\newtheorem{corollary}[theorem]{Corollary}
\newtheorem{proposition}[theorem]{Proposition}
\newtheorem{introthm}{Theorem}
\newtheorem{introcor}[introthm]{Corollary}
\theoremstyle{definition}
\newtheorem{definition}[theorem]{Definition}
\newtheorem{remark}[theorem]{Remark}
\newtheorem{example}[theorem]{Example}
\newtheorem{convention}[theorem]{Convention}
\numberwithin{equation}{subsection}
\numberwithin{figure}{section}
\begin{document}

\title{A convexity theorem for real projective structures}

\author{Jaejeong Lee}

\date{}

\maketitle

\begin{abstract}
Given a finite collection $\mathcal{P}$ of convex $n$-polytopes in
$\R\textup{P}^n$ ($n\ge2$), we consider a real projective manifold
$M$ which is obtained by gluing together the polytopes in
$\mathcal{P}$ along their facets in such a way that the union of any
two adjacent polytopes sharing a common facet is convex. We prove
that the real projective structure on $M$ is
\begin{enumerate}
\item
convex if $\mathcal{P}$ contains no triangular polytope, and
\item
properly convex if, in addition, $\mathcal{P}$ contains a polytope
whose dual polytope is thick.
\end{enumerate}
Triangular polytopes and polytopes with thick duals are defined as
analogues of triangles and polygons with at least five edges,
respectively.
\end{abstract}

\tableofcontents

\section{Introduction}

\begin{figure}[htbp]
\includegraphics[width=1\linewidth]{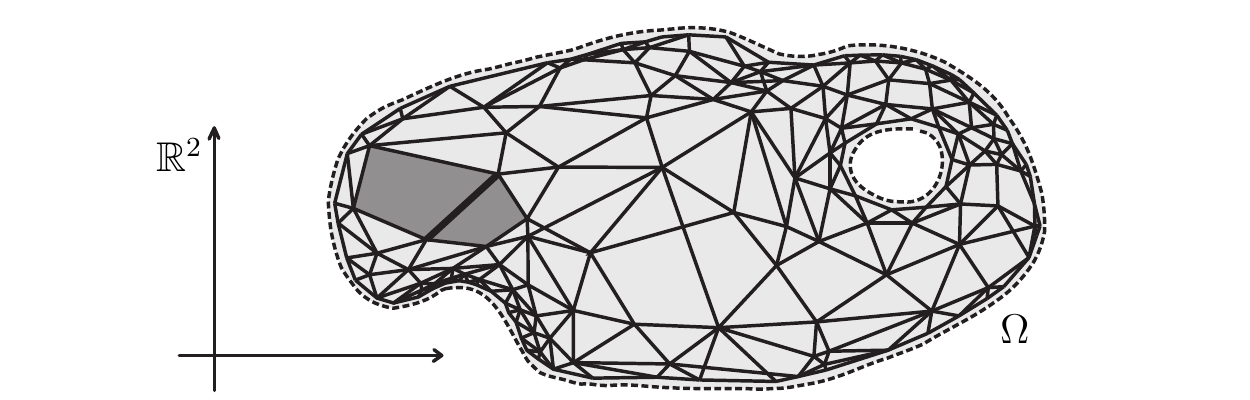}
\caption{\footnotesize A feasible picture of a planar domain
$\Omega$ with a residually convex tessellation. Shaded is the union
of two polygons sharing the thick common edge.} \label{fig:domain}
\end{figure}

Consider a planar domain $\Omega$, an open connected subset of
$\R^2$. Suppose that $\Omega$ admits a tessellation $\mathcal{T}$ by
(a necessarily infinite number of) convex polygons. One may ask if
there are any local conditions on the tessellation $\mathcal{T}$
which can guarantee convexity of the domain $\Omega$. One reasonable
such condition we investigate in this paper is the following:
\begin{quote}
\textsl{the union of two adjacent polygons sharing a common edge is
convex.}
\end{quote}
See Figure~\ref{fig:domain}. This condition was first introduced by
Kapovich \cite{kapovich} and we call tessellations with this
property \emph{residually convex}. It turns out that, under the
residual convexity condition, one can prove the following:
\renewcommand{\labelenumi}{(\Roman{enumi})}
\begin{enumerate}
\item
If $\mathcal{T}$ contains no triangle then the domain $\Omega$ is a
convex subset of $\R^2$.
\item
If, in addition, $\mathcal{T}$ contains a polygon with at least $5$
edges then the convex domain $\Omega$ contains no infinite line.
\end{enumerate}

Figure~\ref{fig:pentagon} illustrates the above assertions: (a)
exhibits a generic shape of a convex domain which admits a
residually convex tessellation without triangles, (b) shows that a
domain containing an infinite line may admit a residually convex
tessellation without polygons with at least $5$ edges, and (c) shows
that a domain with residually convex tessellation containing a
pentagon but no triangles is bounded.
\begin{figure}[htbp]
\includegraphics[width=1\linewidth]{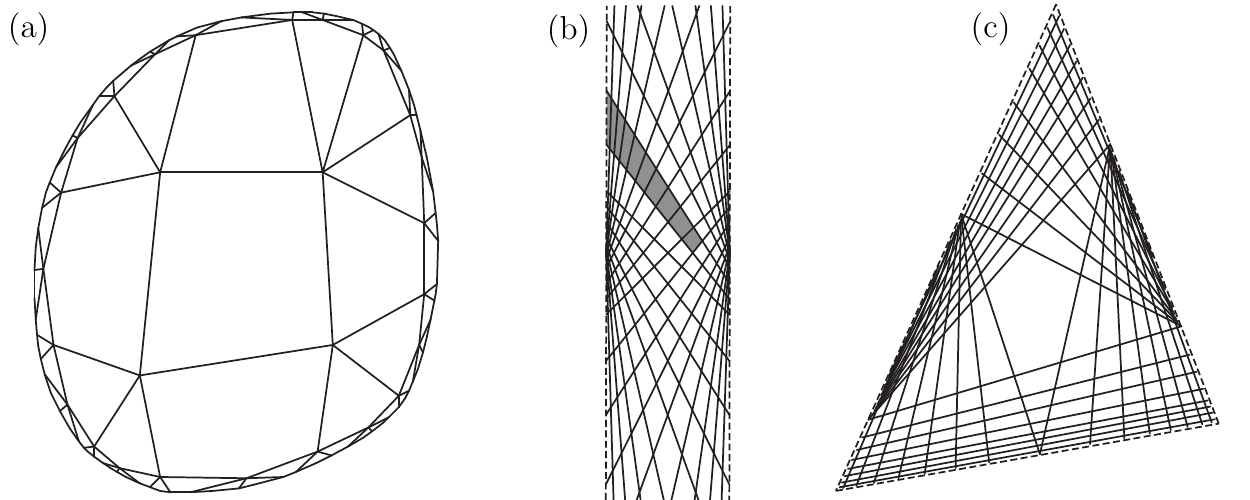}
\caption{\footnotesize (a) A generic residually convex tessellation
without triangles. (b) An unbounded domain with residually convex
tessellation by quadrilaterals, which is not the standard
tessellation of $\R^2$ by squares. A directed gallery (see
Definition~\ref{def:gal}) is shaded. (c) A maximal domain with
residually convex tessellation containing the pentagon in the middle
but no triangles.} \label{fig:pentagon}
\end{figure}

On the other hand, Figure~\ref{fig:benoist}~(b) shows that a
non-convex domain may admit a residually convex tessellation if
triangles are allowed. Figure~\ref{fig:benoist}~(a) motivated the
definition of residual convexity because it clearly exhibits one way
in which a non-convex domain may be tessellated by convex polygons.
Both examples are due to Yves Benoist.
\begin{figure}[htbp]
\includegraphics[width=1\linewidth]{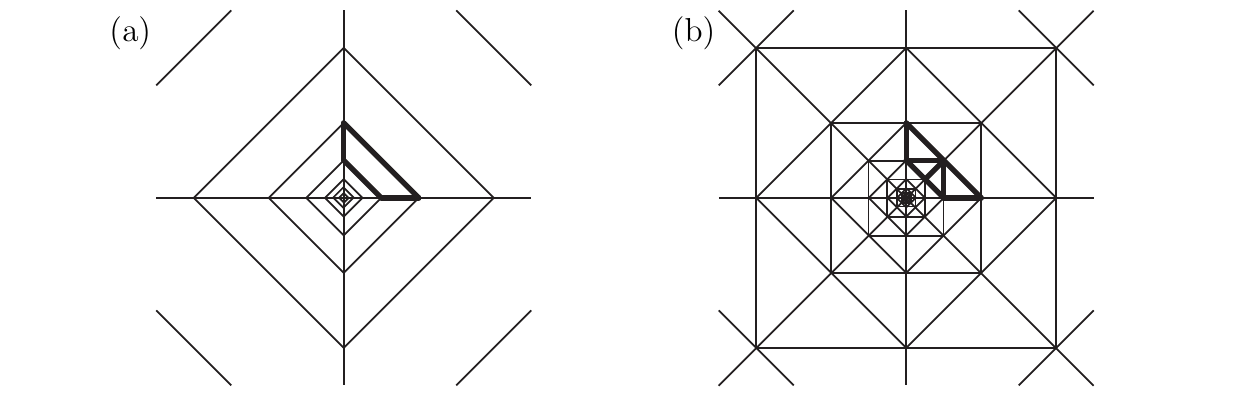}
\caption[8pt]{\footnotesize (a) Given a quadrilateral $P\subset\R^2$
with vertices $(1,0),(2,0),(0,1)$ and $(0,2)$, a tessellation of
$\R^2\setminus\{0\}$ is obtained by taking orbits of $P$ under the
action of the group generated by the homothety by $2$ and the
rotation by $\pi/2$. (b) A residually convex tessellation of
$\R^2\setminus\{0\}$ by triangles is obtained by decomposing the
quadrilateral $P$ in (a) into four triangles.} \label{fig:benoist}
\end{figure}

Our contribution in this paper is to prove the assertions similar to
\textrm{(I)} and \textrm{(II)} above in every dimension -- by
defining appropriate analogues of triangles and polygons with at
least $5$ edges. The former is called a \emph{triangular} polytope
and the latter has a \emph{thick} polytope as its dual. For precise
definitions see Definition~\ref{def:triangular} and
Definition~\ref{def:thick}. As a matter of fact, we prove these
results in a more general context so that they give rise to
convexity criteria for certain real projective structures. From now
on, to the end of the paper, we assume $n\ge2$ except those cases
which are trivially exceptional (like the one in the next
paragraph).

A real projective structure on manifolds is a geometric structure
which is locally modelled on projective geometry $(\R\textup{P}^n,
\textup{Aut}(\R\textup{P}^n))$. If $\Omega\subset\R\textup{P}^n$ is
a convex domain and $\Gamma$ is a discrete subgroup of
$\textup{Aut}(\R\textup{P}^n)$ acting freely and properly
discontinuously on $\Omega$, then the induced real projective
structure on the quotient manifold $\Omega/\Gamma$ is said to be
\emph{convex}. If, moreover, the closure of the convex domain
$\Omega$ does not contain any projective line, then the structure is
called \emph{properly convex}. See Section~\ref{ssec:convex-rpn} for
more details. One of the basic references for real projective
structures is the lecture notes of Goldman \cite{gold-lec}.

Convex real projective structures can be regarded as analogues of
complete Riemannian metrics, and properly convex real projective
structures are expected to share some nice properties with
non-positively curved metrics (see, for example, \cite{benoist} and
\cite{benoist4}). For this reason, given a real projective
structure, one natural question to ask is whether the structure is
(properly) convex. More precisely, let $\{P_i\}$ be a finite family
of convex $n$-dimensional polytopes in $\R\textup{P}^n$. Suppose
that $M$ is a real projective $n$-manifold obtained by gluing
together copies of $P_i$ via projective facet-pairing
transformations. Then there is an associated developing map
$dev:\tilde{M}\to\R\textup{P}^n$ of the universal cover $\tilde{M}$
of $M$, which is a projective isomorphism on each cell of
$\tilde{M}$. One now asks:
\begin{quote}
\textsl{When is the map $dev$ an isomorphism onto a (properly)
convex domain in $\R\textup{P}^n$?}
\end{quote}
The Tits--Vinberg fundamental domain theorem \cite{vinberg} for
discrete linear groups generated by reflections provides a rather
restricted but very constructive solution to this question. Recently
Kapovich \cite{kapovich} proved another convexity theorem when the
$P_i$ are non-compact polyhedra. See Remark~\ref{rem:vin-kap} for a
more detailed discussion. In the present paper, we deal with
complementary cases which are not covered by the aforementioned
results. Our main theorem is as follows (see also
Theorem~\ref{thm:mmain}):

\begin{introthm} \label{introthm:main}
Let $\mathcal{P}$ be a finite family of compact convex
$n$-dimensional polytopes in $\R\textup{P}^n$. Let
$\Phi=\{\phi_\s\in \textup{Aut}(\R\textup{P}^n)\,|\,\s\in\Sigma\}$
be a set of projective facet-pairing transformations for
$\mathcal{P}$ indexed by the collection $\Sigma$ of all facets of
the polytopes in $\mathcal{P}$. Let $M$ be a real projective
$n$-manifold obtained by gluing together the polytopes in
$\mathcal{P}$ by $\Phi$. Assume the following condition:
\begin{quote}
for each facet $\s$ of $P\in\mathcal{P}$, if $\s'$ is a facet of
$P'\in\mathcal{P}$ such that $\phi_\s(\s)=\s'$, then the union
$\phi_\s(P)\cup P'$ is a convex subset of $\R\textup{P}^n$.
\end{quote}
Then the following assertions are true:
\renewcommand{\labelenumi}{\textup{(\Roman{enumi})}}
\begin{enumerate}
\item
If $\mathcal{P}$ contains no triangular polytope, then the
developing map $dev:\tilde{M}\to\R\textup{P}^n$ is an isomorphism
onto a convex domain which is not equal to $\R\textup{P}^n$;
\item
If, in addition, $\mathcal{P}$ contains a polytope $P$ whose dual
$P^*$ is thick, then the map $dev:\tilde{M}\to\R\textup{P}^n$ is an
isomorphism onto a properly convex domain.
\end{enumerate}
\end{introthm}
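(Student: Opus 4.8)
The plan is to study $dev$ directly on the tessellation $\widetilde{\mathcal{T}}$ of $\tilde M$ whose tiles are the lifts of the polytopes of $\mathcal{P}$: on each tile $dev$ is a projective isomorphism onto a compact convex polytope of $\R\textup{P}^n$, and the goal is to show that these polytope images assemble into a convex domain. The backbone is a single inductive statement: \emph{for every connected union $U$ of tiles of $\widetilde{\mathcal{T}}$, the restriction $dev|_U$ is an embedding onto a convex subset of $\R\textup{P}^n$.} The base case is one tile. For the inductive step one enlarges $U$ one tile at a time along a gallery: the incoming tile $T$ meets the previously constructed region along a facet $\s$, and one must see that $dev(U\cup T)$ is again convex and embedded. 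Across the single facet $\s$ this is exactly the residual-convexity hypothesis applied to the facet-pairing transformation identifying $\s$; the real content is to control the faces of $\s$, where $T$ also abuts tiles of $U$ other than its $\s$-neighbour.

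This localizes the whole problem to a neighbourhood of each face of $\widetilde{\mathcal{T}}$, and an induction on the dimension of faces, anchored at the codimension-$2$ (ridge) case, reduces matters to the following ridge lemma. Around a ridge $r$ the tiles containing $r$ form a finite cyclic chain $T_1,\dots,T_k$; developing them and using residual convexity for each consecutive pair $T_i,T_{i+1}$, one shows that the images $dev(T_1),\dots,dev(T_k)$ fit together around $dev(r)$ so that $dev(T_1\cup\cdots\cup T_k)$ is a convex subset of $\R\textup{P}^n$ on which $dev$ is injective; the obstruction to this — tiles ``wrapping all the way around'' the projective $(n-2)$-plane carrying $dev(r)$, as in Benoist's example — is exactly what a triangular polytope is defined to produce. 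Feeding this ridge lemma into the codimension induction yields local convexity and local injectivity of $dev$ at every cell of $\widetilde{\mathcal{T}}$, which is what the gallery induction of the previous paragraph needs.

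With local injectivity in hand, global injectivity of $dev$ follows by the usual minimal-gallery argument: two tiles whose images overlap, joined by a gallery of minimal combinatorial length, would have the gallery ``turn back'' at some ridge and violate the local picture there. Hence $dev$ is a homeomorphism onto $\Omega:=dev(\tilde M)$, and convexity of $\Omega$ follows by transporting the $\widetilde{\mathcal{T}}$-tiles to $\R\textup{P}^n$ and checking that a projective segment joining two points of $\Omega$ meets these tiles in a locally finite chain, passing from one to the next through a common facet, so that residual convexity keeps the segment in $\Omega$; and $\Omega\ne\R\textup{P}^n$ because a convex domain is by convention a proper subset of $\R\textup{P}^n$ (and, concretely, each tile lies in an affine chart, forcing $\Omega$ to do so as well). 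This proves (I). For (II), assume in addition that some $P\in\mathcal{P}$ has thick dual $P^*$ and suppose, for contradiction, that $\overline\Omega$ contains a projective line $\ell$. If $\pi_1 M$ is finite then $\tilde M$ is compact, $\Omega$ is a compact convex domain, and there is nothing to prove; otherwise $\pi_1 M$ is infinite, the lifts of $P$ form an infinite family of tiles of $\widetilde{\mathcal{T}}$ accumulating toward $\partial\Omega$, and $dev^{-1}(\ell)$ is a line in $\tilde M$ meeting infinitely many tiles. A recurrence argument — finitely many polytope types and finitely many combinatorial types of line--tile incidence — produces a lift of $P$ whose interior $\ell$ enters, and at that tile the thickness of $P^*$ is incompatible with residual convexity of the surrounding tiles, exactly as a pentagon cannot sit inside a triangle-free residually convex tessellation of a neighbourhood of a line in the plane.

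The step I expect to be the main obstacle is the ridge lemma together with its codimension induction: extracting from the merely pairwise residual-convexity hypothesis the global description of how the tiles surround a codimension-$2$ plane, and calibrating the definition of ``triangular polytope'' so that its absence is precisely what makes that description hold. Once that is in place, the gallery induction, the minimal-gallery proof of global injectivity, the convexity of $\Omega$, and the proper-convexity argument of (II) are all essentially bookkeeping, with (II) amounting to a localized replay of the planar pentagon argument near a recurrent copy of $P$.
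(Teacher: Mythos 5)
Your high-level outline for (I) has the right shape (localize at ridges, codimension induction, triangular polytopes as the obstruction), but the decisive step — passing from local convexity to global convexity of $\Omega$ — is not actually argued, and the way you propose to close it is circular. You write that ``a projective segment joining two points of $\Omega$ meets these tiles in a locally finite chain, passing from one to the next through a common facet, so that residual convexity keeps the segment in $\Omega$.'' But to know the segment passes from tile to adjacent tile through a shared facet you must already know the segment stays inside $\Omega$, which is exactly the claim. Moreover you work directly in $\R\textup{P}^n$, where a ``segment'' between two points is not even well-defined (there are two arcs), and where locally convex does not formally imply convex because geodesics can close up. The paper sidesteps both issues: it lifts the developing map to $\S^n$, shows that the iterated stars $st^k(P_0)$ are convex $n$-polyballs by an induction that uses strong residual convexity (Lemma~\ref{lem:main}), and deduces global convexity of each star from local convexity via the Alexandrov-space Globalization Theorem (Theorem~\ref{thm:locglo} and Corollary~\ref{cor:locglo}) — a tool the author explicitly flags as indispensable here. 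Your ``ridge lemma'' also understates the content: that the tiles around a single ridge $e$ assemble into a convex set is already residual convexity (Lemma~\ref{lem:residue}(2)); the role of non-triangularity is the stronger statement (Definition~\ref{def:good} and Theorem~\ref{thm:notriangle}) that for every convex subcomplex $F$ in $\partial\,res(e)$ missing $e$, the union $st(F)\cap res(e)$ is again convex — precisely what is needed to keep $st^{k+1}(P_0)$ convex near ridges of its boundary.

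For (II) the gap is sharper. You assume $\overline\Omega$ contains a projective line $\ell$ and then consider $dev^{-1}(\ell)$ as ``a line in $\tilde M$ meeting infinitely many tiles,'' later producing a tile ``whose interior $\ell$ enters.'' But $\ell\subset\overline\Omega$ does not force $\ell$ to meet $\Omega$ at all; generically $\ell$ lies entirely in $\partial\Omega$, so $dev^{-1}(\ell)$ is empty and the recurrence argument never starts. There is no obvious way to push $\ell$ into the interior, since $\Omega$ could be an infinite strip touching $\ell$ only at infinity, and the hypothesis of the theorem is about a single distinguished polytope $P$, not all of them. The paper's route to (II) is genuinely different: it fixes the cell $Q$ with thick dual, constructs directed galleries $Gal_{(Q,\s)}$ in the direction of each facet $\s$ of $Q$ (possible because non-triangular implies not cone-like, Lemma~\ref{lem:cone-like}), extracts from each a supporting hyperplane $H_X(\s)$ of $X$ (Lemma~\ref{lem:gal} and~\eqref{eqn:support}), dualizes the positional constraints of Lemma~\ref{lem:restriction1} into pavilion conditions $x(\s_i)\in\textup{pv}(v_i;Q^*)$ (Lemma~\ref{lem:restriction2}), and finally shows via Lemma~\ref{lem:thin} that if all these dual points lay in a single hyperplane then $Q^*$ would be thin. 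Thickness of $Q^*$ thus produces $n+1$ supporting hyperplanes of $X$ in general position, trapping $X$ in a simplex. This is a constructive argument, not a proof by contradiction, and it never touches $\partial\Omega$ directly.
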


An interesting related question is whether every convex real
projective structures have convex fundamental domains and how common
residually convex structures are. In \cite{lee2} we provide partial
answer by showing that all properly convex real projective
structures have convex fundamental domains.

\subsection{Convexity}

We sketch our approach to assertion (I) of
Theorem~\ref{introthm:main}. The details are the contents of
Section~\ref{sec:complex} and Section~\ref{sec:convexity}. Let
$X=\tilde{M}$ denote the universal covering space of $M$. We
consider the lift $dev:X\to\S^n$ of the developing map to the sphere
$\S^n$, the two-fold cover of $\R\textup{P}^n$. Regarding $\S^n$
then as the standard Riemannian sphere, we pull back the Riemannian
metric to $X$ via $dev$ so that $X$ is locally isometric to $\S^n$.
Then the simply-connected manifold $X$ becomes a spherical
polyhedral complex.
\renewcommand{\labelenumi}{(\arabic{enumi})}
\begin{enumerate}
\item
In fact, we define such a spherical polyhedral complex $X$ admitting
a developing map $dev$ into $\S^n$ in an abstract way
(\emph{$n$-complex}), so that in general the complex $X$ does not
necessarily admit a cocompact group action (see
Definition~\ref{def:complex}). We call a subset $S\subset X$
\emph{convex} if it is mapped by $dev$ injectively onto a convex
subset of $\S^n$.
\item
We then place on $X$ the \emph{residual convexity} condition, that
is, we require that, for every two $n$-polytopes $P_1$ and $P_2$ in
$X$ sharing a common facet, their union $P_1\cup P_2$ be convex (see
Definition~\ref{def:rconvexity}).
\item
We fix a polytope $P_0$ of $X$ and consider the iterated
\emph{stars} $st^k(P_0)$ of $P_0$ so that they exhaust the whole
complex $X$ (see Definition~\ref{def:star-residue}~(1)). Our plan is
to show \textsl{inductively} that
\begin{quote}
each star $st^k(P_0)$ is convex and its image under $dev$ is not
equal to $\S^n$.
\end{quote}
Then this would imply that $dev:X\to\S^n$ is an isometric embedding
onto a convex proper domain in $\S^n$ (see Theorem~\ref{thm:main}).
\item
Projecting $dev:X\rightarrow\S^n$ down back to $\R\textup{P}^n$ we
get the desired convexity result on the real projective structure on
$M$.
\end{enumerate}
A considerable portion of the present paper is devoted to step (3)
of the above plan. We now explain how the induction argument goes:
\renewcommand{\labelenumi}{(\roman{enumi})}
\begin{enumerate}
\item
It turns out that the residual convexity establishes the base step
of the induction (see Lemma~\ref{lem:residue} (1) and
Lemma~\ref{lem:stars} (1)).
\item
We assume that the $k$-th star $st^k(P_0)$ is convex and its image
under $dev$ is not equal to $\S^n$. Then it is rather easy to show
that the $(k+1)$-th star $st^{k+1}(P_0)$ is mapped injectively onto
a topological ball (\emph{$n$-polyball}) in $\S^n$ (see
Lemma~\ref{lem:stars} (2) and Definition~\ref{def:polyball}).
\item
We next want to show that the star $st^{k+1}(P_0)$ is locally
convex. Because of its polyhedral structure, the local convexity of
$st^{k+1}(P_0)$ can be drawn from its local convexity near
codimension-$2$ cells (\emph{ridges}) in the boundary (see
Lemma~\ref{lem:ridge}).
\item
Let $e$ be a codimension-$2$ cell in the boundary of the star
$st^{k+1}(P_0)$. The local geometry of $st^{k+1}(P_0)$ near $e$ is
determined by the union $U(e)$ of $n$-cells in $X$ which contain $e$
and which intersect $st^k(P_0)$. Thus we need to find conditions
which imply that the union $U(e)$ is convex. Interestingly, there is
a \textsl{local} condition for this.
\item
Indeed, we consider a small neighborhood $res(e)$ (\emph{residue})
of $e$ which consists of those $n$-cells in $X$ which contain $e$
(see Definition~\ref{def:star-residue} (2)). Residual convexity
implies that $res(e)$ is convex (see Lemma~\ref{lem:residue} (3)).
Because the star $st^k(P_0)$ is also assumed to be convex and
because $st^k(P_0)$ and $res(e)$ intersect along their boundaries,
their intersection $F:=st^k(P_0)\cap res(e)$ is a convex subset in
the boundary of $res(e)$. Then the union $U(e)$ can be described as
the union $U(e,F)$ of $n$-cells in $res(e)$ which intersect $F$.
\item
The condition, which we call \emph{strong residual convexity},
requires that, for all $e$, the set $U(e,F)$ be always convex
regardless of convex subsets $F$ in the boundary of $res(e)$ (see
Definition~\ref{def:good} and Definition~\ref{def:strong}).
Figure~\ref{fig:strong} illustrates the case where strong residual
convexity fails. In conclusion, under the assumption of strong
residual convexity, we can show that the star $st^{k+1}(P_0)$ is
locally convex near codimension-$2$ cells in its boundary (see
Lemma~\ref{lem:main}).
\begin{figure}[htbp]
\includegraphics[width=1\linewidth]{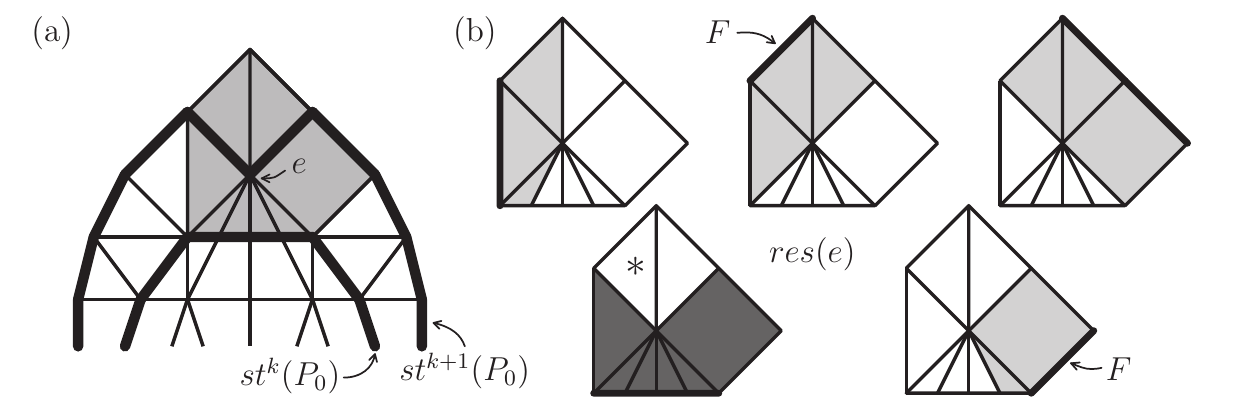}
\caption{\footnotesize Strong residual convexity. (a) A
codimension-$2$ cell $e$ is in the boundary of $st^{k+1}(P_0)$. The
union of $n$-cells containing $e$ forms a convex neighborhood
$res(e)$ of $e$, which intersects the convex set $st^k(P_0)$ along
its boundary. (b) The set $res(e)$ has five maximal convex subsets
$F$ in its boundary. For one of such $F$, the union of $n$-cells of
$res(e)$ intersecting $F$ is not convex. The corresponding picture
is marked by (*).} \label{fig:strong}
\end{figure}
\item
Finally, once the local convexity is established, we may regard the
star $st^{k+1}(P_0)$ as an Alexandrov space of curvature $\ge1$ and
then deduce its global convexity using a well-known local-to-global
theorem for such spaces (see Corollary~\ref{cor:locglo}). All
induction steps are complete.
\end{enumerate}
To summarize, we have the following convexity theorem:
\begin{introthm}\label{intro:main}
Let $X$ be an $n$-complex. If $X$ is strongly residually convex,
then $X$ is isometric to a convex proper domain in $\S^n$. In
particular, $X$ is contractible.
\end{introthm}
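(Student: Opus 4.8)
The plan is to fix a base polytope $P_0$ of $X$ and to prove, by induction on $k$, that each iterated star $st^k(P_0)$ is convex and that $dev\bigl(st^k(P_0)\bigr)\neq\S^n$. Since the stars $st^k(P_0)$ exhaust $X$ and an increasing union of convex proper subdomains of $\S^n$ develops to a convex proper domain, this yields that $dev\colon X\to\S^n$ is an isometric embedding onto a convex proper domain of $\S^n$ (Theorem~\ref{thm:main}); contractibility is then immediate, a convex proper domain in $\S^n$ being a topological ball. The base step $k=0$ is trivial since $st^0(P_0)=P_0$ is a convex polytope, and the convexity of $st^1(P_0)$, as well as the convexity of every residue $res(e)$, follows directly from residual convexity (Lemma~\ref{lem:residue}, Lemma~\ref{lem:stars}).

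For the inductive step, suppose $st^k(P_0)$ is convex and $dev\bigl(st^k(P_0)\bigr)\neq\S^n$. I would first show that $st^{k+1}(P_0)$ develops injectively onto a topological ball --- an \emph{$n$-polyball}: it is obtained from the convex set $st^k(P_0)$ by attaching the stars of its boundary cells, each developing injectively, and the pieces glue along convex faces, so no folding or wrapping around $\S^n$ occurs (Lemma~\ref{lem:stars}). The crux is \emph{local} convexity of $st^{k+1}(P_0)$. By the polyhedral reduction (Lemma~\ref{lem:ridge}), this reduces to local convexity at each codimension-$2$ cell $e$ in the boundary of $st^{k+1}(P_0)$. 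The local picture at such an $e$ is governed by the union $U(e)$ of $n$-cells of $X$ through $e$ that meet $st^k(P_0)$; since $res(e)$ is convex and $F:=st^k(P_0)\cap res(e)$ is a convex subset of the boundary of $res(e)$, we have $U(e)=U(e,F)$, the union of $n$-cells of $res(e)$ touching $F$. Strong residual convexity is exactly the requirement that $U(e,F)$ be convex for \emph{every} admissible convex $F$ in the boundary of $res(e)$ (Definitions~\ref{def:good}, \ref{def:strong}), so $st^{k+1}(P_0)$ is locally convex at $e$, hence everywhere (Lemma~\ref{lem:main}).

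Finally, $st^{k+1}(P_0)$ is a compact, locally convex spherical polyhedral ball, so it may be regarded as an Alexandrov space of curvature $\ge1$, and a local-to-global theorem for such spaces upgrades local to global convexity (Corollary~\ref{cor:locglo}). One must still check $dev\bigl(st^{k+1}(P_0)\bigr)\neq\S^n$; I would carry through the induction a diameter bound (equivalently, a bound keeping the developed image inside a hemisphere), which both supplies this and ensures that the exhausting union stays a proper domain. With both halves of the inductive claim verified, the induction closes and the theorem follows.

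The step I expect to be hardest is the reduction of local convexity to the codimension-$2$ cells together with the use of strong residual convexity: one has to prove that the local geometry of the enlarged star is genuinely detected at the ridges, and that \emph{strong} residual convexity --- not merely residual convexity --- is precisely what forces $U(e,F)$ to be convex for all $F$ at once; the failure mode in Figure~\ref{fig:strong} shows there is no slack. By comparison, the Alexandrov local-to-global step, while technically delicate (the correct notion of local convexity across all polyhedral strata, a uniform curvature lower bound, and diameter control to stay within the injectivity radius of $\S^n$), is essentially an application of known results once local convexity is in hand.
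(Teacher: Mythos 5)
Your outline tracks the paper's proof of Theorem~\ref{thm:main} closely: iterate stars, show each is an $n$-polyball (Lemma~\ref{lem:stars}), reduce local convexity to ridges (Lemma~\ref{lem:ridge}), invoke strong residual convexity (Lemma~\ref{lem:main}), and globalize with the Alexandrov local-to-global criterion (Corollary~\ref{cor:locglo}). There is, however, a real gap in how you set up the induction. You fix a single base cell $P_0$ and carry only the statement ``$st^k(P_0)$ is a convex polyball.'' That is too weak to feed into the next step: Lemma~\ref{lem:stars}~(2), which promotes $st^{k+1}(P_0)$ to an $n$-polyball, requires $st^k(P)$ to be a convex $n$-polyball for \emph{every} $n$-cell $P$ in $X$. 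The reason is structural: the proof of that lemma rests on the decomposition $st^{k+1}(P_0)=\bigcup_{P\in\mathcal{P}}st^k(P)$, where $\mathcal{P}$ is the set of $n$-cells in $st(P_0)$, together with the fact that all of these $st^k(P)$ contain $P_0$; injectivity of $dev$ on the union is then a star-shapedness argument run from a common interior point of $P_0$. Your substitute heuristic --- ``attach the stars of boundary cells, glue along convex faces, no wrapping'' --- does not supply this anchor: stars of distinct boundary cells of $st^k(P_0)$ need not share any interior point, so there is no basepoint from which to run the geodesic comparison, and it is not obvious the union is a ball. The fix is cheap --- the paper simply states the inductive claim for all $n$-cells $P$ simultaneously --- but as written your induction hypothesis does not close.

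A smaller inaccuracy: the properness argument is not that the developed stars stay inside a single fixed hemisphere (they need not). What the proof actually gives is that every point of $st^k(P_0)$ lies at distance strictly less than $\pi$ from any fixed interior point of $P_0$ (because $st^k(P_0)_\S$ is a proper convex set of diameter $\le\pi$ in which that point is interior). Hence all the stars avoid the antipode of that fixed interior point, and so does their union, which is therefore proper. You should state it this way rather than as a uniform hemisphere bound carried through the induction.
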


As can be seen in steps (iii)-(vi) above, the
\textsl{codimension-$2$ phenomena} in polyhedral complexes enables
us to go from dimension $2$ to arbitrary dimensions. This is a
rather common trick which can be found, for example, in the proof of
the Poincar\'e fundamental polyhedron theorem for constant curvature
spaces (see, for example, \cite{ep} and \cite{rat}). However, we
find it worthwhile to develop this trick into a form which is
suitable for our present purpose. Hence the most of
Section~\ref{sec:prelim} is devoted to the study of geometric links
of faces of various dimensions in convex polytopes.

Although strong residual convexity is entirely a local condition,
for practical reasons, it is desirable to have simple combinatorial
conditions under which residual convexity becomes strong residual
convexity. Observe that triangles caused the failure of strong
residual convexity in Figure~\ref{fig:strong}. See also
Figure~\ref{fig:notriangle}. Using the codimension-$2$ phenomena
once again, we define \emph{triangular} polytopes and show that
without presence of triangular polytopes residual convexity implies
strong residual convexity (see Theorem~\ref{thm:notriangle}).
Combining this result with Theorem~\ref{intro:main} we obtain the
following corollary, which again implies assertion \textrm{(I)} of
Theorem~\ref{introthm:main}.

\begin{introcor} \label{intro:cor}
Let $X$ be a residually convex complex. If $X$ contains no
triangular polytopes, then $X$ is isometric to a convex domain which
is not $\S^n$.
\end{introcor}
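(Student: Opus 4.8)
The plan is simply to chain together the two results that the narrative has already assembled. By hypothesis $X$ is a residually convex $n$-complex, so it satisfies the residual convexity condition of Definition~\ref{def:rconvexity} verbatim; and by hypothesis it contains no triangular polytope. These are exactly the hypotheses of Theorem~\ref{thm:notriangle}, so I would first invoke that theorem to conclude that $X$ is in fact \emph{strongly} residually convex --- i.e. for every codimension-$2$ cell $e$ and every maximal convex subset $F$ in $\partial\, res(e)$, the union $U(e,F)$ is convex.

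Next I would feed this into Theorem~\ref{intro:main}: a strongly residually convex $n$-complex is isometric, via its developing map, to a convex proper domain $\Omega\subset\S^n$. Since a proper domain is, by definition, a proper (open, connected) subset, we have $\Omega\neq\S^n$, which is precisely the stated conclusion. The only things to check along the way are bookkeeping: that ``strongly residually convex complex'' as produced by Theorem~\ref{thm:notriangle} is literally the hypothesis of Theorem~\ref{intro:main}, and that ``convex proper domain in $\S^n$'' entails ``convex domain which is not $\S^n$''; both are immediate. If one wishes, the final clause of Theorem~\ref{intro:main} also yields contractibility of $X$, although that is not needed for the assertion here.

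So the proof itself is essentially a one-line syllogism, and I would present it as such. The real work --- and hence the only real obstacle --- lies entirely upstream, in the two inputs: Theorem~\ref{intro:main} requires the inductive exhaustion of $X$ by the stars $st^k(P_0)$, the codimension-$2$ reduction of local convexity, and the Alexandrov-geometry local-to-global step; Theorem~\ref{thm:notriangle} requires the codimension-$2$ combinatorial analysis showing that the absence of a triangular polytope forces the passage from residual convexity to strong residual convexity. Granting those, there is nothing further to do for this corollary.
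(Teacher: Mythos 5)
Your argument is exactly the paper's: the corollary is obtained in the body as Corollary~\ref{cor:notriangle} by combining Theorem~\ref{thm:notriangle} (no triangular polytopes plus residual convexity implies strong residual convexity) with Theorem~\ref{thm:main} (strong residual convexity implies $X$ is isometric to a convex proper domain in $\S^n$). Your bookkeeping remarks are also correct, so the proposal is complete and takes the same route as the paper.
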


\subsection{Proper convexity}

We now outline our approach to assertion \textrm{(II)} of
Theorem~\ref{introthm:main}. The details are explained in
Section~\ref{sec:pconvexity}. The starting point is the above
Corollary~\ref{intro:cor}. That is, we assume that our complex $X$
is residually convex and contains no triangular polytopes. Then $X$
is isometric to a convex domain in $\S^n$. Thus from now on we
regard $X$ as a convex subset of $\S^n$ and find conditions implying
proper convexity of $X$.

Our eventual plan is to find $n+1$ supporting hyperplanes of $X$
that are in general position. Then $X$ is contained in the
$n$-simplex which is determined by these hyperplanes. Because
$n$-simplices are properly convex, the conclusion then follows.
Fortunately, there is a natural way to find supporting hyperplanes
of $X$ provided that $X$ contains no triangular polytope. Thus we
need to find further conditions under which there are $n+1$ such in
general position.

For example, if $X$ is $2$-dimensional and contains no triangle, all
polygons in $X$ have at least four edges and this enables us to
construct the following objects in $X$. We fix a polygon $Q_0$ in
$X$. Given an edge $e_0$ of $Q_0$, consider the polygon $Q_1$ that
is adjacent to $Q_0$ along the common edge $e_0$. Then we can choose
an edge $e_1$ of $Q_1$ which is disjoint from $e_0$. We then
consider the polygon $Q_2$ adjacent to $Q_1$ along $e_1$. Choose an
edge $e_2$ of $Q_2$ which is disjoint from $e_1$, and so on. This
process defines an infinite sequence (\emph{directed gallery}) of
adjacent polygons in $X$ (see Figure~\ref{fig:pentagon}~(b) and
Definition~\ref{def:gal}). One can then show that the limit of the
lines spanned by the edges $e_i$ is a supporting line to $X$. Now,
if the polygon $Q_0$ is, say, a pentagon then we have five such
supporting lines constructed from the edges of $Q_0$ as above. It is
easy to see that two supporting lines coming from two nearby edges
of $Q_0$ may coincide but those coming from disjoint edges of $Q_0$
never coincide. Because $5=2+2+1$, this implies that there are at
least three supporting lines of $X$ which are in general position so
that they bound a triangle (see Figure~\ref{fig:pentagon}~(c)).

We now explain how the previous arguments in dimension $2$ can be
generalized to higher dimensions:
\renewcommand{\labelenumi}{(\alph{enumi})}
\begin{enumerate}
\item
To be able to define directed galleries, we need the analogues of
polygons with at least four edges. For this, we re-interpret
triangles and define \emph{cone-like} polytopes (see
Definition~\ref{def:cone-like}). If none of the polytopes in $X$ is
cone-like then we can define directed galleries in $X$. It turns out
that non-triangular polytopes are not cone-like (see
Lemma~\ref{lem:cone-like}).
\item
Fix a polytope $Q$ in $X$. Each directed gallery associated to a
facet $\s$ of $Q$ defines a supporting hyperplane $H_X(\s)$ of $X$.
Because every $n$-polytope has at least $n+1$ facets, we have at
least $n+1$ such supporting hyperplanes.
\item
Such simple counting as $5=2+2+1$ above does not work in higher
dimensions, where both combinatorial and geometric arguments are
necessary. To deal with the arrangement of supporting hyperplanes,
we consider the dual $Q^*$ of $Q$ and points $x(\s)$ dual to the
halfspaces $H_X(\s)^+$ which contain $X$ and which are bounded by
the supporting hyperplanes $H_X(\s)$. On the other hand, the
vertices $\s^*$ of $Q^*$ are dual to the halfspaces
$\langle\s\rangle^+$ which contain $Q$ and which are bounded by the
hyperplanes $\langle\s\rangle$ spanned by facets $\s$ of $Q$.
\item
Each hyperplane $H_X(\s)$ associated to a facet $\s$ of $Q$ has some
restriction on its location (see Lemma~\ref{lem:restriction1}). We
translate this restriction in terms of duality to obtain a subset
(\emph{pavilion}) of $Q^*$ associated to the vertex $\s^*$, to which
the point $x(\s)$ must belong (see Definition~\ref{def:pav} and
Lemma~\ref{lem:restriction2}).
\item
Finally, we prove that if $Q^*$ is \emph{thick} then there always
exist $n+1$ such points $x(\s)$ in general position, which again
implies that there always exist $n+1$ supporting hyperplanes
$H_X(\s)$ of $X$ in general position (see Lemma~\ref{lem:thin}).
\end{enumerate}
In summary, we have the following theorem (see
Theorem~\ref{thm:pconvexity}) which implies the assertion
\textrm{(II)} of Theorem~\ref{introthm:main}:
\begin{introthm}
Let $X\subset\S^n$ be a residually convex $n$-complex such that none
of the $n$-cells of $X$ are triangular. If $X$ has an $n$-cell $Q$
whose dual $Q^*$ is thick, then $X$ is a properly convex domain in
$\S^n$.
\end{introthm}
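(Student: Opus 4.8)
The plan is to follow the outline sketched above, reducing proper convexity of $X$ to the existence of $n+1$ supporting hyperplanes of $X$ in general position. The hypotheses that $X$ is residually convex and contains no triangular $n$-cell already place us in the situation of Corollary~\ref{intro:cor}, so $X$ is isometric to a convex domain in $\S^n$ which is not all of $\S^n$; from the outset I would therefore identify $X$ with a convex proper subset of $\S^n$. Since a properly convex domain is exactly a convex domain whose closure contains no projective line, and since $n+1$ hyperplanes in general position determine an $n$-simplex, which is properly convex, it suffices to produce $n+1$ supporting hyperplanes of $X$ in general position: then $X$, being convex, is contained in the resulting simplex and hence is properly convex.

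First I would manufacture supporting hyperplanes from the combinatorics of the distinguished $n$-cell $Q$. By Lemma~\ref{lem:cone-like}, a non-triangular polytope is not cone-like, so no $n$-cell of $X$ is cone-like; this is precisely what is needed to launch a directed gallery (Definition~\ref{def:gal}) in $X$ from any facet $\s$ of $Q$ — pass to the adjacent $n$-cell across $\s$, choose a facet disjoint from $\s$, and iterate. For each facet $\s$ of $Q$ I would show, using residual convexity together with the convexity of $X$, that the hyperplanes spanned by the distinguished facets along the gallery converge to a hyperplane $H_X(\s)$ that supports $X$; let $H_X(\s)^+$ denote the closed halfspace bounded by $H_X(\s)$ that contains $X$, and let $x(\s)\in\S^n$ be the point dual to $H_X(\s)^+$. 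Because $Q$ has at least $n+1$ facets, this yields at least $n+1$ supporting hyperplanes of $X$.

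The remaining and decisive task is to arrange that some $n+1$ of the $H_X(\s)$ are in general position, equivalently that the corresponding dual points $x(\s)$ are affinely independent. Here I would pass to the dual polytope $Q^*$: its vertices $\s^*$ are dual to the halfspaces $\langle\s\rangle^+\supset Q$ bounded by the facet hyperplanes of $Q$, while the restriction on the location of $H_X(\s)$ coming from the directed gallery (Lemma~\ref{lem:restriction1}) translates, under duality, into the assertion that $x(\s)$ must lie in a prescribed subset of $Q^*$ — the pavilion at the vertex $\s^*$ (Definition~\ref{def:pav}, Lemma~\ref{lem:restriction2}). The problem thus becomes one purely about $Q^*$: to choose one point from the pavilion at each vertex so that $n+1$ of the chosen points are in general position. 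This is exactly where the hypothesis that $Q^*$ is thick enters, via Lemma~\ref{lem:thin}: thickness of $Q^*$ forces the existence of such $n+1$ points in general position (in dimension $2$ this is the counting $5=2+2+1$, while in general it rests on the combinatorial–geometric analysis of pavilions in Section~\ref{sec:pconvexity}). I expect this to be the crux: showing that thickness of $Q^*$ genuinely rules out every degenerate configuration of the $x(\s)$. Once $n+1$ supporting hyperplanes of $X$ in general position are in hand, $X$ lies in the $n$-simplex they bound and is therefore properly convex, which is Theorem~\ref{thm:pconvexity}.
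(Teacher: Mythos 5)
Your proposal reproduces the paper's own argument essentially step for step: reduce to finding $n+1$ supporting hyperplanes in general position, launch directed galleries from the facets of $Q$ (possible because non-triangular implies not cone-like, Lemma~\ref{lem:cone-like}), extract limiting supporting hyperplanes $H_X(\s)$, dualize to points $x(\s)$ constrained to pavilions of $Q^*$ (Lemma~\ref{lem:restriction2}), and invoke thickness via Lemma~\ref{lem:thin} to rule out all of the $x(\s)$ lying on a common hyperplane. This is exactly the paper's proof of Theorem~\ref{thm:pconvexity}; the one place to tighten your phrasing is that the $x(\s)$ are not freely \emph{chosen} in the pavilions but are handed to you by the geometry, and Lemma~\ref{lem:thin} is used in contrapositive form to show that \emph{any} such configuration must contain $n+1$ points in general position, which you in fact note at the end of the paragraph.
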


In the final Section~\ref{sec:rpn} we discuss real projective
structures in more detail and explain how all these results are
applied to give convexity theorem for certain real projective
structures.

\subsection{Remark}
It should be noted that we introduce metric to prove
Theorem~\ref{introthm:main}, which does not involve any
metric-dependent notion. There are two main reasons for using metric
in our discussion:
\begin{itemize}
\item
When we consider links of polytopes and argue inductively, we can
embed links of various dimension in a single space $\S^n$ so that
our presentation gains more convenience and geometric flavor.
However, this is not an essential ingredient in our proof and there
is a more natural way of defining links without using metric (see
Remark~\ref{rem:metric}).
\item
We can use a local-to-global theorem for Alexandrov spaces of
curvature bounded below (see Theorem~\ref{thm:locglo}). We do not
know how to draw global convexity of spherical domains from their
local convexity without using this theorem.
\end{itemize}

\subsection*{Acknowledgements} My advisor Misha Kapovich recommended
me to investigate the property of residual convexity. I am grateful
to him for this and I deeply appreciate his encouragement and
patience during my work. I also thank Yves Benoist and Damian Osajda
for helpful discussions. During this work I was partially supported
by the NSF grants DMS-04-05180 and DMS-05-54349.

\section{Preliminaries} \label{sec:prelim}

Let $\R^n$ be the $n$-dimensional Euclidean vector space. We denote
the origin by $o$ and the standard inner product by
$\langle\;,\;\rangle$. Given a linear subspace $L$ its orthogonal
complement is denoted $L^\bot$. For two subsets $S_1$ and $S_2$
their sum $S_1+S_2$ is the set of all points $x_1+x_2$ for $x_1\in
S_1$ and $x_2\in S_2$.

Let $S$ be a subset of $\R^n$ whose closure $\overline{S}$ contains
the origin $o$. The smallest linear subspace containing $S$ is
denoted $L(S)$. The \emph{(linear) dimension} of $S$ is defined to
be the dimension of this subspace. We say that $S$ is \emph{open} if
it is open relative to $L(S)$. A point $x\in S$ is called an
\emph{interior} (resp. \emph{boundary}) point of $S$ if $x$ is an
interior (resp. boundary) point of $S$ relative to $L(S)$.

\subsection{Convex cones}

A subset $S\subset\R^n$ is said to be \emph{convex} if for every
$x,y\in S$ and for every $a\ge0, b\ge0$ such that $a+b=1$ the point
$ax+by$ is in $S$, that is, the affine line segment joining $x$ and
$y$ is in $S$. One can show that if $S$ is convex then its closure
$\overline{S}$ is also convex. The \emph{convex hull}
$\textup{conv}(S)$ of a subset $S$ is the smallest convex subset
containing $S$. A \emph{cone} $C$ is a subset of $\R^n$ such that if
$x\in C$ and $a>0$ then $ax\in C$. Thus cones are invariant under
positive homotheties of $\R^n$. Note that for any cone $C$ its
closure $\overline{C}$ necessarily contains the origin $o$.

A \emph{convex cone} is a cone which is convex. Linear subspaces and
halfspaces bounded by codimension-$1$ linear subspaces are convex
cones; these examples contain a complete affine line. A convex cone
is called \emph{line-free} if it contains no complete affine line.
Given a convex cone $C$ we denote by $l(C)$ the largest linear
subspace contained in $\overline{C}$. The following lemma says that
a closed convex cone decomposes into a linear part and a line-free
part; compare with \cite{gold-lec} and \cite{grun}. See also
Figure~\ref{fig:decomp}(a).

\begin{lemma}[Decomposition Theorem] \label{lem:decomp}
Let $C$ be a convex cone in $\R^n$. Then $l(C)=\{o\}$ if and only if
$\overline{C}$ is line-free. If $l(C)\neq\{o\}$ then $\overline{C}$
decomposes into
$$
\overline{C}=(\overline{C}\cap l(C)^\bot)+l(C)
$$
and $\overline{C}\cap l(C)^\bot$ is a line-free convex cone, where
$l(C)^\bot$ denotes the orthogonal complement of $l(C)$.
\end{lemma}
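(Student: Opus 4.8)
The plan is to base everything on one elementary observation: a closed convex cone $\overline C$ is closed under addition, since for $x,y\in\overline C$ convexity gives $\tfrac12x+\tfrac12y\in\overline C$ and then the cone property gives $x+y=2\bigl(\tfrac12x+\tfrac12y\bigr)\in\overline C$. First I would use this to see that $l(C)$ really is well defined, i.e.\ that there is a \emph{largest} linear subspace contained in $\overline C$: if $L_1,L_2\subseteq\overline C$ are linear subspaces then $L_1+L_2\subseteq\overline C+\overline C\subseteq\overline C$ is again a linear subspace, so the family of linear subspaces inside $\overline C$ is directed under inclusion, and inside the finite-dimensional $\R^n$ it has a unique maximal element, namely the sum of all of its members.

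Next I would isolate the auxiliary fact that a \emph{closed} convex cone $D$ contains a complete affine line if and only if it contains a line through the origin. One direction is immediate. For the other, suppose $\{p+tv:t\in\R\}\subseteq D$ with $v\ne o$; since $sD=D$ for every $s>0$ we get $\{sp+tv:t\in\R\}\subseteq D$ for all $s>0$, and letting $s\to0^+$ and using closedness of $D$ yields $\R v\subseteq D$. Taking $D=\overline C$ gives the first assertion at once: $\overline C$ is line-free $\iff$ $\overline C$ contains no line through $o$ $\iff$ $l(C)=\{o\}$.

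For the decomposition, I would set $L=l(C)$, assume $L\ne\{o\}$, and argue both inclusions. The inclusion $(\overline C\cap L^\bot)+L\subseteq\overline C$ is just closure of $\overline C$ under addition together with $L\subseteq\overline C$. Conversely, for $z\in\overline C$ write $z=z'+z''$ with $z'\in L^\bot$ and $z''\in L$ its orthogonal components; then $-z''\in L\subseteq\overline C$, so $z'=z+(-z'')\in\overline C$, whence $z'\in\overline C\cap L^\bot$ and $z\in(\overline C\cap L^\bot)+L$. Finally, $D:=\overline C\cap L^\bot$ is a closed convex cone containing no line through $o$, because any line $\R v\subseteq D$ is a linear subspace of $\overline C$, hence lies in $L=l(C)$, forcing $v\in L\cap L^\bot=\{o\}$; by the auxiliary fact $D$ is therefore line-free.

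There is no real obstacle here; the one point that warrants care is the well-definedness of $l(C)$, and---in the same spirit---the limit argument $s\to0^+$, which is the only place where passing to the closure $\overline C$ is genuinely needed.
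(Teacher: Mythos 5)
Your proof is correct, but it takes a genuinely different route from the paper's. The paper's key technical step is a \emph{translation-invariance} claim: for $x,y\in\overline C$, the line $\{x+tz\}$ lies in $\overline C$ if and only if the parallel line $\{y+tz\}$ does, proved by forming the convex combination $y_{s,t}=\tfrac{s}{s+1}y+\tfrac{1}{s+1}(x+stz)$ and letting $s\to\infty$. The paper then derives the first assertion by taking $y=o$, and derives the decomposition by noting that each translate $x+l(C)$ meets $l(C)^\bot$ and applying the claim. Your approach instead isolates the more elementary observation that a closed convex cone is closed under addition, and proves only the \emph{special case} of the translation claim (line $\iff$ line through $o$) by a scaling argument $s\to 0^+$ rather than $s\to\infty$. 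You then handle the decomposition directly via orthogonal projection plus closure under addition: the nontrivial inclusion $\overline C\subseteq(\overline C\cap L^\bot)+L$ comes from $z'=z+(-z'')\in\overline C+\overline C\subseteq\overline C$. This is cleaner and arguably more transparent than the paper's reliance on the stronger translation claim; it also has the merit of addressing the well-definedness of $l(C)$ (the family of linear subspaces inside $\overline C$ is directed under $+$), a point the paper leaves implicit. One small thing you could state more explicitly: to invoke that every line $\R v\subset\overline C$ lies in $L=l(C)$, you are using that $L$ is the sum of \emph{all} subspaces in $\overline C$, which is exactly what your directedness argument establishes, so the logic is sound.
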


\begin{proof} Let $x$ and $y$ be two points in $\overline{C}$. We first
claim that $\overline{C}$ contains the complete affine line
$\{x+tz\,|\,t\in\R\}$ passing through $x$ in the direction of $z$ if
and only if it contains the parallel line $\{y+tz\,|\,t\in\R\}$
passing through $y$. Suppose first that $\overline{C}$ contains the
line $\{x+tz\,|\,t\in\R\}$. Then for any $s>0$ and $t\in\R$, the
point
$$
y_{s,t}=\frac{s}{s+1}y+\frac{1}{s+1}(x+stz)
$$
is on the affine segment joining $y$ and $x+stz$. Because
$\overline{C}$ is convex the point $y_{s,t}$ is in $\overline{C}$.
As $s$ goes to infinity, however, $y_{s,t}$ converges to $y+tz$.
Since $\overline{C}$ is closed, this shows that $\overline{C}$
contains the line $\{y+tz\,|\,t\in\R\}$. Since $x$ and $y$ play the
equivalent roles, this completes the proof of the claim.

Recall that $\overline{C}$ contains the origin $o$. Then the above
claim says that $\overline{C}$ contains a complete affine line if
and only if it contains a $1$-dimensional subspace. Therefore,
$l(C)=\{o\}$ if and only if $\overline{C}$ is line-free.

So from now on we suppose that $l(C)\neq\{o\}$. Because
$l(C)\subset\overline{C}$ and any translate $x+l(C)$ of $l(C)$
intersects $l(C)^\bot$, it follows from the above claim that
$\overline{C}$ decomposes into $\overline{C}=(\overline{C}\cap
l(C)^\bot)+l(C)$. Since both $\overline{C}$ and $l(C)^\bot$ are
convex cones, their intersection $\overline{C}\cap l(C)^\bot$ is
also a convex cone. Suppose by way of contradiction that
$\overline{C}\cap l(C)^\bot$ contains a complete affine line. The
above claim then shows that it also contains a $1$-dimensional
subspace $l$. But the subspace $l+l(C)$ properly contains $l(C)$ and
is contained in $\overline{C}$; this is contradictory to the
definition of $l(C)$. The proof of lemma is complete.
\end{proof}

\begin{remark} \label{rem:metric}
We can avoid using metric $\langle\;,\;\rangle$ and state
Lemma~\ref{lem:decomp} in terms of quotient space instead of
orthogonal complement. Namely, let $\pi_{l(C)}:\R^n\to\R^n/l(C)$ be
the natural projection onto $\R^n/l(C)$. Then
$\pi_{l(C)}(\overline{C})$ is a line-free convex cone in $\R^n/l(C)$
such that $\overline{C}=\pi_{l(C)}^{-1}[\pi_{l(C)}(\overline{C})]$.
We may consider $\pi_{l(C)}(\overline{C})$ as the line-free part of
$\overline{C}$ and use this to define links of polyhedral cones and
polytopes in the following discussion. While we can proceed in this
more natural way, we prefer using metric for the sake of
presentational convenience.
\end{remark}

A \emph{hyperplane} is an $(n-1)$-dimensional linear subspace of
$\R^n$. Let $C$ be a convex cone. We say that a hyperplane $H$
\emph{supports} $C$ if $C$ is contained in one of the closed
halfspaces bounded by $H$; this halfspace is denoted by $H^+$ (and
the other one by $H^-$) and is also said to support $C$. In fact, it
can be shown that if $C\neq\R^n$ then $C$ is contained in some
halfspace of $\R^n$ (see for example \cite{fenchel}). A non-empty
subset $f\subsetneq C$ is called a \emph{face} of $C$ if there is a
supporting hyperplane $H$ of $C$ such that $f=C\cap H$. Obviously,
faces of $C$ are also convex cones.

\subsection{Polyhedral cones} \label{polycone}

A subset $P\subset\R^n$ is called a \emph{polyhedral cone} if it is
the intersection of a finite family of closed halfspaces of $\R^n$.
Clearly, polyhedral cones are closed convex cones. A polyhedral cone
$P$ is \emph{polytopal} if it is line-free, that is, $l(P)=\{o\}$.

Let $P$ be a polyhedral cone in $\R^n$. It is known that if $f$ is a
face of $P$ then faces of $f$ are also faces of $P$. A maximal face
of $P$ is called a \emph{facet} of $P$. A \emph{ridge} of $P$ is a
facet of a facet of $P$. Let $P=\bigcap_{i=1}^m H_i^+$ where the
$H_i^+$ are halfspaces bounded by hyperplanes $H_i$. We further
assume that the family $\{H_i^+\}$ is \emph{irredundant}, that is,
$$
\bigcap_{j\neq i} H_j^+ \neq P
$$
for each $i=1,2,\ldots,m$. The irredundancy condition implies the
following properties of faces of $P$ (see \cite{grun}):
\begin{itemize}
\item
If $P$ is $n$-dimensional, a facet of $P$ is of the form $P\cap H_i$
for some $i$;
\item
The boundary of $P$ is the union of all facets of $P$;
\item
Each ridge of $P$ is a non-empty intersection of two facets of $P$;
\item
Every face of $P$ is a non-empty intersection of facets of $P$.
\end{itemize}
Thus the number of faces of $P$ is finite. If $P$ is
$n'$-dimensional then its facets are $(n'-1)$-dimensional and ridges
are $(n'-2)$-dimensional.

\subsection{Links in polyhedral cones}

Let $P=\bigcap_{i=1}^m H_i^+$ be a polyhedral cone in $\R^n$. Let
$f$ be a face of $P$. If $P$ is $n$-dimensional then we may assume
without loss of generality that $f$ is the intersection of facets
$P\cap H_1,\ldots,P\cap H_{m_f}$ of $P$ for some $m_f<m$, that is,
$$
f=(P\cap H_1)\cap\cdots\cap(P\cap H_{m_f})=P\cap(H_1\cap\cdots\cap
H_{m_f}).
$$
Because any sufficiently small neighborhood of an interior point of
$f$ intersects only those hyperplanes $H_i$ which contain $f$, the
local geometry of $P$ near an interior point of $f$ is the same as
the local geometry near the origin $o$ of the polyhedral cone
determined by the corresponding halfspaces $H_i^+$. We denote this
polyhedral cone by
$$
P_f=H_1^+\cap\cdots\cap H_{m_f}^+.
$$
By Lemma~\ref{lem:decomp}, the polyhedral cone $P_f$ decomposes into
$$
(P_f\cap l(P_f)^\bot)+l(P_f).
$$
However, the linear part $l(P_f)$ is just the intersection
$H_1\cap\cdots\cap H_{m_f}$, which is again equal to the smallest
linear subspace $L(f)$ containing $f$. Thus we have
$$
P_f=(P_f\cap L(f)^\bot)+L(f).
$$
Now the \emph{link} $\L(f;P)$ of $f$ in $P$ is defined to be the
line-free part of $P_f$:
$$
\L(f;P)=P_f\cap L(f)^\bot=\bigcap_{i=1}^{m_f} (H_i^+\cap L(f)^\bot).
$$
See Figure~\ref{fig:decomp} (b).
\begin{figure}[htbp]
\includegraphics[width=1\linewidth]{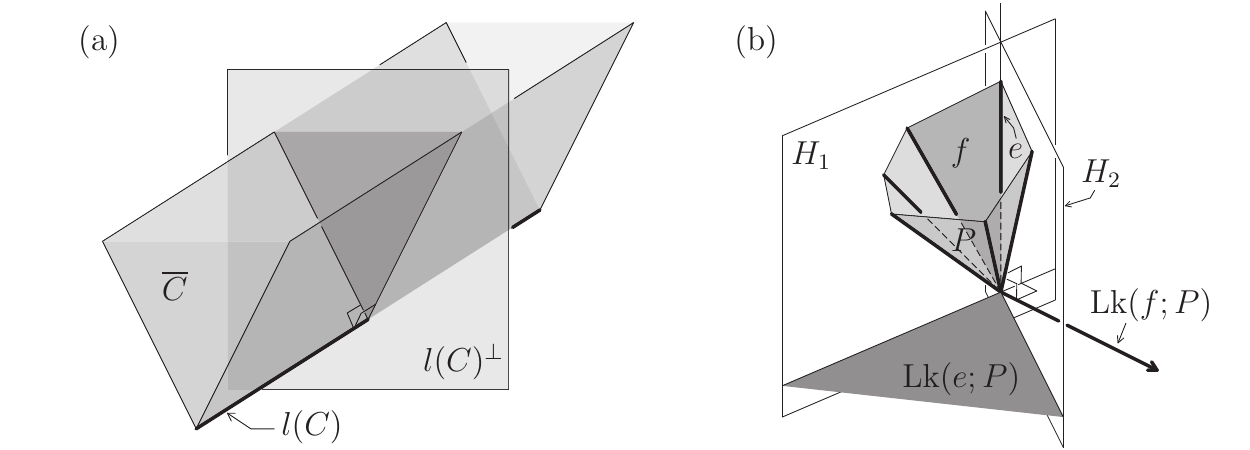}
\caption{\footnotesize (a) Illustration of Lemma~\ref{lem:decomp}
(Decomposition Theorem). (b) Links $\L(e;P)$ and $\L(f;P)$ in a
polytopal cone $P$.} \label{fig:decomp}
\end{figure}
If $f$ has dimension $m$ then $L(f)$ is $m$-dimensional and
$L(f)^\bot$ is $(n-m)$-dimensional. Because $P_f$ has full-dimension
in $\R^n$, $P_f\cap L(f)^\bot$ is also full-dimensional in
$L(f)^\bot$. It follows that the link $\L(f;P)$ is an
$(n-m)$-dimensional \textsl{polytopal} cone in
$L(f)^\bot\subset\R^n$ with its defining halfspaces being $H_i^+\cap
L(f)^\bot$.

We defined the link $\L(f;P)$ under the assumption that $P$ is an
$n$-dimensional polyhedron in $\R^n$. If $P$ is $n'$-dimensional
with $n'<n$, however, we just consider the smallest linear subspace
$L(P)$ containing $P$ and define the link $\L(f;P)$ with respect to
$L(P)$ in the same manner as above. Thus if $f$ is $m$-dimensional,
its link $\L(f;P)$ is an $(n'-m)$-dimensional polytopal cone in
$L(P)\cap L(f)^\bot\subset\R^n$.

Let $P$ be an $n$-dimensional polyhedral cone in $\R^n$. Let $f$ be
a face of $P$ and $e$ a face of $f$. We define a subset $f_{(e;P)}$
of the link $\L(e;P)$ as:
$$
f_{(e;P)}=\L(e;P)\cap L(f).
$$
The lemma below says that $f_{(e;P)}$ is a face of the polytopal
cone $\L(e;P)$, whose link in $\L(e;P)$ is equal to the link
$\L(f;P)$. Thus the link $\L(e;P)$ of $e$ has all the information
about the links $\L(f;P)$ of those faces $f$ which contain $e$; this
fact enables us to use inductive arguments on links later on.

\begin{lemma} \label{links1}
Let $P$ be an $n$-dimensional polyhedral cone in $\R^n$. Let $f$ be
a face of $P$ and $e$ a face of $f$. Then $f_{(e;P)}$ is a face of
the polytopal cone $\L(e;P)$. If $f$ is a facet of $P$ then
$f_{(e;P)}$ is also a facet of $\L(e;P)$. Furthermore, we have the
following identity between the two links involved:
$$
\L(f;P)=\L[f_{(e;P)};\L(e;P)].
$$
\end{lemma}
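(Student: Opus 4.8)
The plan is to work directly from the defining halfspace descriptions of the links, reducing everything to a computation with hyperplanes restricted to an orthogonal complement. Since $P$ is $n$-dimensional, write $P=\bigcap_{i=1}^m H_i^+$ irredundantly, and arrange that $e=P\cap(H_1\cap\cdots\cap H_{m_e})$ with $L(e)=H_1\cap\cdots\cap H_{m_e}$, and that $f=P\cap(H_1\cap\cdots\cap H_{m_f})$ with $m_e\le m_f$ and $L(f)=H_1\cap\cdots\cap H_{m_f}$. By definition,
$$
\L(e;P)=\bigcap_{i=1}^{m_e}\bigl(H_i^+\cap L(e)^\bot\bigr),
$$
a polytopal cone living in the space $V:=L(e)^\bot$, with defining halfspaces $K_i^+:=H_i^+\cap V$ bounded by the hyperplanes $K_i:=H_i\cap V$ of $V$ (here $i=1,\dots,m_e$).

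First I would identify $f_{(e;P)}=\L(e;P)\cap L(f)$ as a face of $\L(e;P)$. Since $L(f)=L(e)\cap\bigl(H_{m_e+1}\cap\cdots\cap H_{m_f}\bigr)$... wait, more carefully: $L(f)=H_1\cap\cdots\cap H_{m_f}\subset L(e)$ is not right since $L(f)\subset L(e)$ means $L(f)=L(e)\cap(H_{m_e+1}\cap\cdots\cap H_{m_f})$ only after intersecting with $L(e)$; in any case $L(f)\cap V=L(f)\cap L(e)^\bot$ is a linear subspace of $V$, and I claim it is cut out inside $\L(e;P)$ by a subset of the supporting hyperplanes $K_i$. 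The point is that, near an interior point of $f$, the only facets of $P$ meeting a small neighborhood are $P\cap H_1,\dots,P\cap H_{m_f}$, so the hyperplanes through $f$ among $H_1,\dots,H_{m_e}$ are exactly those $H_i$ containing $f$; restricting these to $V$ gives supporting hyperplanes $K_i$ of $\L(e;P)$ whose common intersection with $\L(e;P)$ is $f_{(e;P)}$. Concretely, $f_{(e;P)}=\bigcap_{i\in I}\bigl(\L(e;P)\cap K_i\bigr)$ where $I\subseteq\{1,\dots,m_e\}$ indexes the $H_i$ containing $f$, which exhibits it as a face of the polyhedral cone $\L(e;P)$ by the facet-structure results quoted in Section~\ref{polycone}. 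When $f$ is a facet of $P$, $m_f=m_e+1$ hmm — no: a facet of $P$ has $L(f)$ of codimension $1$, so there is exactly one extra hyperplane, but it need not be among the first $m_e$. The cleaner statement is $\dim f_{(e;P)} = \dim\L(e;P)-(\dim e - \dim f)\cdot(-1)$... let me instead just track dimensions: $\dim\L(e;P)=n-\dim e$, and $f_{(e;P)}=\L(e;P)\cap L(f)$ has dimension $\dim f - \dim e + (\text{something})$; I would verify $\dim f_{(e;P)}=\dim f-\dim e$ directly, using that $L(f)\cap L(e)^\bot$ has dimension $\dim L(f)-\dim L(e)$ together with full-dimensionality of $\L(e;P)$ in $V$. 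Then $f$ a facet of $P$ ($\dim f=\dim e$... no, $\dim f=n-1$ is wrong too since these are faces of $e$'s containing $f$)—here $e$ is a face of $f$, so $\dim e\le\dim f$; $f$ a facet of $P$ means $\dim f=n-1$, giving $\dim f_{(e;P)}=n-1-\dim e=\dim\L(e;P)-1$, i.e.\ $f_{(e;P)}$ is a facet of $\L(e;P)$.

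The main obstacle — and the heart of the lemma — is the link identity $\L(f;P)=\L[f_{(e;P)};\L(e;P)]$. I would prove this by unwinding both sides to halfspace intersections and checking they coincide. On one hand, $\L(f;P)=\bigcap_{i=1}^{m_f}\bigl(H_i^+\cap L(f)^\bot\bigr)$, a polytopal cone in $L(f)^\bot$. On the other hand, to form $\L[f_{(e;P)};\L(e;P)]$ I work inside $V=L(e)^\bot$: the ambient cone is $\L(e;P)=\bigcap_{i=1}^{m_e} K_i^+$, the face is $f_{(e;P)}$ cut out by the $K_i$ with $i\in I$ (those $H_i$, $i\le m_e$, containing $f$), and its link is $\bigcap_{i\in I}\bigl(K_i^+\cap L_V(f_{(e;P)})^\bot\bigr)$, the orthogonal complement taken within $V$. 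The key linear-algebra fact to nail down is that $L_V(f_{(e;P)})$, the span of $f_{(e;P)}$ inside $V$, equals $L(f)\cap L(e)^\bot$, and hence its orthogonal complement within $V=L(e)^\bot$ is $L(f)^\bot\cap L(e)^\bot{}^{\bot\text{-within-}V}$ — which, after a short computation with orthogonal complements (using $L(e)\subseteq L(f)$, so $L(e)^\bot\supseteq L(f)^\bot$), simplifies to exactly $L(f)^\bot$. Thus the ambient space for the inner link is $L(f)^\bot$, matching the left side. It then remains to match the defining halfspaces: I must show $\{H_i^+\cap L(f)^\bot : 1\le i\le m_f\}$ and $\{K_i^+\cap L(f)^\bot : i\in I\}=\{H_i^+\cap L(e)^\bot\cap L(f)^\bot: i\in I\}=\{H_i^+\cap L(f)^\bot: i\in I\}$ cut out the same cone. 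For $i\in I$ these are literally the same halfspaces. For $i\in\{1,\dots,m_f\}\setminus I$: if $i\le m_e$ but $i\notin I$, then $H_i$ does not contain $f$, yet $H_i^+\supseteq L(f)^\bot\cap(\text{cone})$ — hmm, actually such $i$ with $i\le m_e$, $H_i\not\ni f$ cannot occur if we set up $f=P\cap(H_1\cap\cdots\cap H_{m_f})$ with the $H_i$ containing $e$ listed first and those containing $f$ among them being precisely the first so-many; I would simply reorganize the indexing at the outset so that $\{1,\dots,m_e\}\cap\{H_i\ni f\}=\{1,\dots,r\}$ and $\{1,\dots,m_f\}=\{i:H_i\ni f\}$, making $I=\{1,\dots,r\}$ and the two halfspace families visibly equal. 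Care is needed to justify this reorganization using the irredundancy/facet facts quoted earlier (every face is the intersection of the facets containing it), and to confirm that passing to $L(f)^\bot$ doesn't make previously-irredundant halfspaces redundant in a way that breaks the identification; but since both sides are defined as the same finite halfspace intersection after the bookkeeping, once the ambient-space identification $L_V(f_{(e;P)})^{\bot_V}=L(f)^\bot$ is in hand the two cones are equal on the nose. I expect the dimension-counting step and this orthogonal-complement identity to be the only places requiring genuine care; the rest is bookkeeping with the halfspace descriptions.
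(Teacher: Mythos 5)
Your overall plan is the paper's: write $P=\bigcap H_i^+$, express both links as intersections of halfspaces restricted to orthogonal complements, and reduce the identity to the linear-algebra fact that the orthogonal complement of $L(f_{(e;P)})=L(e)^\perp\cap L(f)$ inside $L(e)^\perp$ equals $L(f)^\perp$ (which follows from $L(e)\subset L(f)$). That identification is indeed the heart of the matter, and you have it.

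However, the setup you commit to at the start is inverted, and the confusion this produces is never resolved. Since $e$ is a face of $f$, we have $e\subset f$, so every bounding hyperplane $H_i$ containing $f$ also contains $e$; consequently the index set for $f$ sits \emph{inside} the index set for $e$, i.e.\ $m_f\le m_e$ (as in the paper: $f=P\cap(H_1\cap\cdots\cap H_{m_f})$, $e=P\cap(H_1\cap\cdots\cap H_{m_e})$ with $m_f<m_e$). You instead write $m_e\le m_f$. This is not just a typo: it forces $L(f)\subset L(e)$ (impossible, since $e\subset f$ gives $L(e)\subset L(f)$), and you notice something is off (``wait, more carefully\dots is not right'') but never diagnose the inversion. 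The later patch (``reorganize the indexing so that $\{1,\dots,m_e\}\cap\{H_i\ni f\}=\{1,\dots,r\}$ and $\{1,\dots,m_f\}=\{i:H_i\ni f\}$'') is still incompatible with $m_e\le m_f$ unless $e=f$. As a result, the halfspace-matching step — showing the families $\{H_i^+\cap L(f)^\perp:1\le i\le m_f\}$ and $\{H_i^+\cap L(e)^\perp\cap L(f)^\perp:i\in I\}$ cut out the same cone — never actually closes: with the correct ordering one has simply $I=\{1,\dots,m_f\}$ (the hyperplanes through $f$, all of which automatically pass through $e$), the two families coincide term-by-term, and the ``leftover'' case $i\in\{1,\dots,m_f\}\setminus I$ that you agonize over is empty. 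Fixing the inequality at the outset makes the whole argument collapse into the paper's clean computation; without that fix, the proof as written does not go through.
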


\begin{proof}
We write $P=\bigcap_{i=1}^m H_i^+$ for an irredundant family
$\{H_i^+\}$ of halfspaces of $\R^n$ bounded by $H_i$. We may assume
that for some $m_f<m_e<m$ the faces $f$ and $e$ are expressed as
\begin{align*}
f&=P\cap(H_1\cap\cdots\cap H_{m_f})\\
e&=P\cap(H_1\cap\cdots\cap H_{m_f}\cap H_{m_f+1}\cap\cdots\cap
H_{m_e}).
\end{align*}
If we set, as before,
\begin{align*}
P_f&=H_1^+\cap\cdots\cap H_{m_f}^+\\
P_e&=H_1^+\cap\cdots\cap H_{m_f}^+\cap H_{m_f+1}^+\cap\cdots\cap
H_{m_e}^+,
\end{align*}
then the links of $f$ and $e$ are by definition
\begin{align*}
\L(f;P)&=P_f\cap L(f)^\bot\\
\L(e;P)&=P_e\cap L(e)^\bot=\bigcap_{i=1}^{m_e} (H_i^+\cap
L(e)^\bot).
\end{align*}
Because $L(f)=H_1\cap\cdots\cap H_{m_f}$ and $\L(e;P)\subset
L(e)^\bot$, we then have
\begin{align*}
f_{(e;P)}&=\L(e;P)\cap L(f)\\
&=\L(e;P)\cap(H_1\cap\cdots\cap H_{m_f})\\
&=\L(e;P)\cap[(H_1\cap L(e)^\bot)\cap\cdots\cap(H_{m_f}\cap
L(e)^\bot)].
\end{align*}
Since $m_f<m_e$ and the defining halfspaces of $\L(e;P)$ are
$H_i^+\cap L(e)^\bot$ $(1\le i\le m_e)$, this shows that $f_{(e;P)}$
is a face of the polytopal cone $\L(e;P)$. If $f$ is a facet of $P$
then $m_f=1$ and $f=P\cap H_1$. Therefore,
$f_{(e;P)}=\L(e;P)\cap(H_1\cap L(e)^\bot)$ is a facet of $\L(e;P)$.

To see the claimed equality we first note that, because
$\L(e;P)\subset L(e)^\bot$ has non-empty interior in $L(e)^\bot$,
\begin{align*}
L(f_{(e;P)})=L[\L(e;P)\cap L(f)]=L(e)^\bot\cap L(f).
\end{align*}
Because $e\subset f$ and hence $L(e)\subset L(f)$, we then have
\begin{align*}
L(e)^\bot\cap
L(f_{(e;P)})^\bot=L(e)^\bot\cap(L(e)+L(f)^\bot)=L(f)^\bot.
\end{align*}
Finally, unraveling all the definitions, we see that
\begin{align*}
\L[f_{(e;P)};\L(e;P)]&=\L(e;P)_{f_{(e;P)}}\cap L(f_{(e;P)})^\bot\\
&=[(H_1^+\cap L(e)^\bot)\cap\cdots\cap(H_{m_f}^+\cap L(e)^\bot)]\cap L(f_
{(e;P)})^\bot\\
&=(H_1^+\cap\cdots\cap H_{m_f}^+)\cap L(e)^\bot\cap L(f_{(e;P)})^\bot\\
&=P_f\cap L(f)^\bot\\
&=\L(f;P).\qedhere
\end{align*}
\end{proof}

\subsection{Spherical polytopes}

Let $\S^n$ be the unit sphere in $\R^{n+1}$. To any subset
$S\subset\S^n$ we associate the \emph{cone} $\c_S$ \emph{over} $S$
defined by
$$
\c_S=\{ax\in\R^{n+1}\,|\,x\in S, a\ge0\}.
$$
For a subset $S\subset\S^n$ and a cone $C\subset\R^{n+1}$, it is
clear that
$$
\c_S\cap\S^n=S\;\;\text{and}\;\;\c_{C\cap\S^n}=C\cup\{o\}.
$$
A subset $L\subset\S^n$ is an \emph{$m$-plane} provided that the
cone $\c_L$ over $L$ is an $(m+1)$-dimensional linear subspace of
$\R^{n+1}$. The orthogonal complement $L^\bot$ of an $m$-plane $L$
is defined to be $(\c_L)^\bot\cap\S^n$.

Let $S$ be a subset of $\S^n$. The smallest $m$-plane containing $S$
is denoted $L(S)$ and is clearly equal to $L(\c_S)\cap\S^n$. The
\emph{dimension} of $S$ is defined to be the dimension of this
plane. We call $S$ \emph{open} if it is open relative to $L(S)$.
Likewise, a point $x\in S$ is called an \emph{interior} (resp.
\emph{boundary}) point of $S$ if $x$ is an interior (resp. boundary)
point of $S$ relative to $L(S)$. We also denote by $S^\circ$ the set
of interior points of $S$.

A subset $S\subset\S^n$ is \emph{convex} (resp. \emph{properly
convex}) if the cone $\c_S$ over $S$ is a convex cone (resp.
line-free convex cone). It is clear that $S\subset\S^n$ is convex if
and only if for any two points in $S$ the (spherical) geodesic
connecting them is in $S$. A subset $S\subset\S^n$ is \emph{locally
convex} if every point of $S$ has a neighborhood in $S$ which is a
convex subset of $\S^n$. The \emph{convex hull} $\textup{conv}(S)$
of a subset $S$ is the smallest convex subset containing $S$.
Finally, a subset $S\subset\S^n$ is a \emph{noun} if the cone $\c_S$
over $S$ is a noun in $\R^{n+1}$, where the noun stands for
\emph{hyperplane, halfspace, support} or \emph{face}. Note that if
$S\neq\S^n$ is convex then $\c_S\neq\R^{n+1}$ is a convex cone and
is contained in a halfspace of $\R^{n+1}$. Thus every convex subset
$S$ not equal to $\S^n$ is contained in a halfspace of $\S^n$ and
hence has diameter at most $\pi$.

A subset $P\subset\S^n$ is a \emph{polyhedron} (resp.
\emph{polytope}) if the cone $\c_P$ over $P$ is a polyhedral cone
(resp. polytopal cone) in $\R^{n+1}$. If a polyhedron $P$ has
dimension $m$ we call $P$ an $m$-polyhedron and similarly for
polytopes. A maximal face of $P$ is called a \emph{facet} of $P$. A
\emph{ridge} of $P$ is a facet of a facet of $P$. A \emph{vertex}
(resp. \emph{edge}) of $P$ is a $0$-dimensional (resp.
$1$-dimensional) face of $P$. Let $P=\bigcap_{i=1}^m H_i^+$ where
the $H_i^+$ are halfspaces bounded by hyperplanes $H_i$, that is,
$H_i^+=(\c_{H_i})^+\cap\S^n$. Under the same irredundancy condition
on the family $\{H_i^+\}$ as in Section~\ref{polycone}, the same
properties of faces of $P$ as listed therein hold.

Let $P\subset\S^n$ be a polyhedron and $f$ a face of $P$. The
\emph{link} $\L(f;P)$ of $f$ in $P$ is by definition
$$
\L(f;P)=\L(\c_f;\c_P)\cap\S^n.
$$
See Figure~\ref{fig:links}.
\begin{figure}[htbp]
\includegraphics[width=1\linewidth]{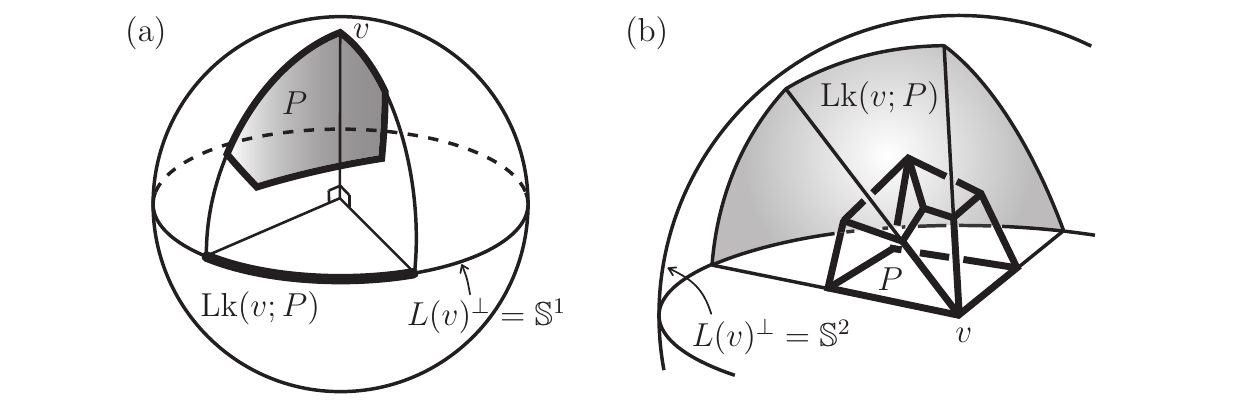}
\caption{\footnotesize Links $\L(v;P)$ of $v$ in $P$ are drawn in
(a) and (b). The ambient space of (b) is $\S^3$.} \label{fig:links}
\end{figure}
Because $\L(\c_f;\c_P)$ is a polytopal cone, the link $\L(f;P)$ is a
polytope in $\S^n$. If $P$ is an $n$-polyhedron and $f$ is an
$m$-face then the link $\L(f;P)$ is an $(n-m-1)$-polytope. Let $e$
be a face of $f$ and define a subset $f_{(e;P)}$ of $\L(e;P)$ by
$$
f_{(e;P)}=\L(e;P)\cap L(f).
$$
It then follows from Lemma~\ref{links1} that $f_{(e;P)}$ is a face
of the polytope $\L(e;P)$ and the following identity holds between
the two links involved:
\begin{align} \label{links2}
\L(f;P)=\L(f_{(e;P)};\L(e;P)).
\end{align}

\subsection{Duality} \label{ssec:dual}

Let $\R_n$ be the dual vector space $(\R^n)^*=\textup{Hom}(\R^n,\R)$
of $\R^n$. It is equipped with the standard inner product coming
from that of $\R^n$. Denote by $\S_n$ the unit sphere in $\R_n$.

Let $C$ be a cone in $\R^n$. The \emph{dual cone} $C^*$ of $C$ is
defined by
$$
C^*=\{u\in\R_n\,|\,u(x)\le0\;\text{for all}\;x\in C\}.
$$
It is easy to see that $C^*$ is a closed convex cone in $\R_n$. If
$L$ is an $m$-dimensional linear subspace of $\R^n$ then $L^*$ is an
$(n-m)$-dimensional linear subspace of $\R_n$. If $H^+$ is a
halfspace bounded by a hyperplane $H$ then $(H^+)^*$ is a ray in
$\R_n$. We have the following well-known facts (compare with
\cite{grun} and \cite{fenchel}):
\begin{itemize}
\item
If $C$ is a closed convex cone then $C^{**}=C$ (under the natural
identification $(\R_n)^*=\R^n$) and
\begin{align*}
\dim L(C^*) +\dim l(C)&=n;\\
\dim L(C)+\dim l(C^*)&=n.
\end{align*}
\item
If $C$ and $D$ are closed convex cones then
$$
(C\cap D)^*=\textup{conv}(C^*\cup D^*).
$$
\item
If $P$ is a polyhedral cone then so too is $P^*$.
\item
If $P$ is an $n$-dimensional polytopal cone then so too is $P^*$.
\end{itemize}

Let $S$ be a subset of $\S^n$. The \emph{dual} $S^*$ of $S$ is
defined by
$$
S^*=(\c_S)^*\cap\S_n.
$$
Thus the dual $S^*$ of $S$ is always a closed convex subset of
$\S_n$. If $L\subset\S^n$ is an $m$-plane then $L^*$ is an
$(n-m-1)$-plane. In particular, the dual of a hyperplane $H$ is a
pair $\{\pm v\}=\S_0$ of antipodal points. The dual of a halfspace
is a single point; if $(H^+)^*=v$ then $(H^-)^*=-v$. The analogous
properties for cones as listed above also hold for subsets of
$\S^n$. In particular, if $P\subset\S^n$ is an $n$-polytope then so
too is its dual $P^*$; if $P$ is expressed as
$$
P=\bigcap_{i=1}^m H_i^+,
$$
then
$$
P^*=\left[\bigcap_{i=1}^m
H_i^+\right]^*=\textup{conv}\left[\bigcup_{i=1}^m
(H_i^+)^*\right]=\textup{conv}\{v_1,v_1,\ldots,v_m\},
$$
where each $v_i=(H_i^+)^*$ becomes a vertex of the dual polytope
$P^*$.

\subsection{Alexandrov spaces of curvature bounded below}

The main reference for this subsection is \cite{bgp}. Fix a real
number $\kappa$. Let $M_\kappa^n$ be the $n$-dimensional complete
simply-connected Riemannian manifold of constant curvature $\kappa$,
and denote $D_\kappa=\pi/\sqrt{\kappa}$ for $\kappa>0$ and
$D_\kappa=\infty$ for $\kappa\le0$. Thus, for example, we have
$M_1^n=\mathbb{S}^n$ and $D_1=\pi$. We denote by $d$ the induced
path metric on $M_\kappa^n$.

Let $X$ be a metric space. Given three points $p,q,r\in X$
satisfying
$$
d(p,q)+d(q,r)+d(r,p)<2D_\kappa,
$$
there is a comparison triangle $\triangle(\bar{p},\bar{q},\bar{r})$
in $M_\kappa^2$, namely, three points $\bar{p},\bar{q},\bar{r}\in
M_\kappa^2$ such that
$$
d(\bar{p},\bar{q})=d(p,q),\;\; d(\bar{q},\bar{r})=d(q,r),\;\;
d(\bar{r},\bar{p})=d(r,p).
$$
We define $\bar{\angle}pqr$ to be the angle at the vertex $\bar{q}$
of the triangle $\triangle(\bar{p},\bar{q},\bar{r})$.

Let $X$ be a path metric space, that is, a metric space where the
distance $d$ between each pair of points is equal to the infimum of
the length of rectifiable curves joining them. Then $X$ is said to
be $\textup{Alex}(\kappa)$ provided that for any four distinct
points $b,c,d$ and $a$ in $X$ we have the inequality
$$
\bar{\angle}bac + \bar{\angle}cad + \bar{\angle}dab \le 2\pi.
$$
(If $X$ is a $1$-dimensional manifold and $\kappa>0$, then we
require in addition that its diameter be at most $D_\kappa$.) The
path metric space $X$ is said to be \emph{locally
$\textup{Alex}(\kappa)$}, or more commonly, \emph{an Alexandrov
space of curvature $\ge\kappa$}, if each point $x\in X$ has a
neighborhood $U_x$ which is $\textup{Alex}(\kappa)$.

Examples of locally $\textup{Alex}(\kappa)$ spaces include
Riemannian manifolds without boundary or with locally convex
boundary whose sectional curvatures are $\ge\kappa$. (Locally)
convex subsets of such Riemannian manifolds are also locally
$\textup{Alex}(\kappa)$. We shall be interested mostly in the case
when $\kappa=1$ and $M_1^n=\mathbb{S}^n$ -- locally convex subsets
of $\S^n$ are locally $\textup{Alex}(1)$.

The following is a local-to-global theorem for
$\textup{Alex}(\kappa)$ spaces which is analogous to the
Cartan-Hadamard theorem for $\textup{CAT}(\kappa)$ spaces with
$\kappa\le0$ (see for example \cite{bh}). Unlike the Cartan-Hadamard
theorem, however, we do not place any topological restriction on the
space in this theorem:

\begin{theorem}[Globalization Theorem] \label{thm:locglo}
If a complete path metric space is locally $\textup{Alex}(\kappa)$,
then it is $\textup{Alex}(\kappa)$ and has diameter $\le D_\kappa$.
\end{theorem}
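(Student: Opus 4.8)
The plan is to reduce the statement to a purely metric assertion and then recall the globalization argument of \cite{bgp}, since nothing in the earlier part of the paper bears on it. First I would record the equivalent formulations of "$X$ is $\textup{Alex}(\kappa)$": the quadruple (or $(1+3)$-point) inequality $\bar\angle bac+\bar\angle cad+\bar\angle dab\le 2\pi$ used in the definition, the triangle comparison inequality (angles of geodesic triangles are no smaller than those of their $M_\kappa^2$-comparison triangles), and — most convenient for the induction — Toponogov's distance comparison along a geodesic with a fixed apex. One also reduces to $\kappa\in\{-1,0,1\}$ by rescaling the metric (scaling lengths by $\lambda$ sends curvature $\kappa$ to $\kappa/\lambda^2$). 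Since $X$ is complete and, in every situation where this theorem is applied, locally compact — indeed a locally convex subset of $\S^n$ — a Hopf--Rinow argument shows $X$ is a geodesic space in which closed bounded sets are compact; this is what licenses taking limits of minimizing geodesics later on. (In full generality one needs finite Hausdorff dimension in place of local compactness, which is why in the body we simply cite \cite{bgp}.)

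The core is the local-to-global bootstrap. Fix an apex $p$ and, for $\rho>0$, say $(\ast_\rho)$ holds if Toponogov's distance comparison with apex $p$ is valid for every minimizing segment $[xy]$ with $d(p,x)+d(p,y)+d(x,y)<\rho$. Local $\textup{Alex}(\kappa)$-ness plus a standard chaining argument shows $R(p):=\sup\{\rho:(\ast_\rho)\text{ holds}\}>0$ for every $p$ and is lower semicontinuous in $p$. Assume, for contradiction, that comparison fails somewhere. Using compactness of bounded sets and of spaces of geodesics (Arzel\`a--Ascoli), one selects an \emph{extremal} violating configuration — for instance a geodesic triangle of minimal perimeter among all "bad" ones. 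Inserting the midpoint $m$ of the offending side splits it into two subconfigurations of strictly smaller size, to which comparison \emph{does} apply; assembling their $M_\kappa^2$-comparison triangles along the common side $[pm]$ and invoking Alexandrov's lemma (concavity of the glued comparison angle) forces the original configuration to satisfy comparison after all — the contradiction. For $\kappa=1$ one must additionally keep every perimeter below $2D_\kappa$ so that comparison triangles exist, which is arranged by first establishing the diameter bound on small balls and propagating it along with $(\ast_\rho)$.

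Finally, the diameter estimate. Once $X$ is globally $\textup{Alex}(\kappa)$ with $\kappa>0$, a Bonnet--Myers-type argument, a direct consequence of the angle comparison, yields $\mathrm{diam}(X)\le D_\kappa$: on small scales already a geodesic cannot remain minimizing past length $D_\kappa$ (first variation together with comparison, exactly as in $\S^n$), so no two points lie at distance $>D_\kappa$, and the boundary case $d(p,q)=D_\kappa$ is handled by feeding a third point into the comparison inequality. The $1$-dimensional clause in the definition is exactly the ad hoc provision needed for circles.

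I expect the main obstacle to be the extremal-configuration/subdivision step: making the compactness selection rigorous and carrying out the Alexandrov-lemma bookkeeping that upgrades "comparison on the two halves" to "comparison on the whole," while simultaneously controlling perimeters in the $\kappa>0$ case. Since all of this is done in detail in \cite{bgp}, in the paper we only cite that reference; the sketch above is meant to indicate why precisely the hypotheses — completeness and the local quadruple inequality — are what the argument consumes.
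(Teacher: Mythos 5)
The paper does not prove this theorem; it cites \cite{bgp} and moves on, exactly as you observe at the end of your sketch. Your outline of the \cite{bgp} argument---the Toponogov reformulation, Hopf--Rinow to obtain a geodesic space, the bootstrap via extremal configurations and Alexandrov's lemma, and the perimeter control when $\kappa>0$---is a fair summary of the cited proof, so the two are in full agreement.
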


\noindent For its proof we refer to \cite{bgp}. As a corollary of
the globalization theorem, we have the following criterion for
locally convex subsets of $M_\kappa^n$ to be convex. Note that if
$\kappa>0$, geodesics in $M_\kappa^n$ have length at most
$D_\kappa$.

\begin{corollary} \label{cor:locglo}
Let $C$ be a locally convex connected subset of $M_\kappa^n$. If
$\kappa>0$, we assume in addition that $C$ is not a $1$-dimensional
manifold. If $C$ is complete and locally compact with respect to the
induced path metric, then $C$ is convex in $M_\kappa^n$.
\end{corollary}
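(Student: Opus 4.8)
The statement to prove is Corollary~\ref{cor:locglo}: a locally convex connected subset $C$ of $M_\kappa^n$ (with the caveat that $C$ is not a $1$-manifold when $\kappa>0$), if complete and locally compact in the induced path metric, is convex.

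\medskip

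\noindent\textbf{Proof proposal.}

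The plan is to deduce convexity from the Globalization Theorem (Theorem~\ref{thm:locglo}) by verifying that a locally convex subset $C$ of $M_\kappa^n$, equipped with its induced path metric, is locally $\textup{Alex}(\kappa)$, and then to bootstrap the resulting global curvature bound back into a genuine convexity statement. First I would observe that since $C$ is locally convex, every point $x\in C$ has a neighborhood $U_x$ in $C$ that is a convex subset of $M_\kappa^n$; a convex subset of the constant-curvature space $M_\kappa^n$ is itself a space of curvature $\ge\kappa$ (this is exactly the example noted just before the statement — convex subsets of Riemannian manifolds with sectional curvature $\ge\kappa$ are locally $\textup{Alex}(\kappa)$). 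Hence $C$ is locally $\textup{Alex}(\kappa)$ in its intrinsic path metric. Here one must be a little careful that the path metric on $C$ restricted to a small convex $U_x$ agrees with the path metric of $U_x$ as a subspace of $M_\kappa^n$ — this holds because geodesics of $M_\kappa^n$ between nearby points of a convex set stay inside it, so shortest paths in $C$ between nearby points coincide with shortest paths in $M_\kappa^n$. The completeness and local compactness hypotheses let us apply Theorem~\ref{thm:locglo}, concluding that $(C,d)$ is globally $\textup{Alex}(\kappa)$ with diameter $\le D_\kappa$.

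\medskip

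The remaining, and genuinely substantive, step is to pass from ``$C$ is $\textup{Alex}(\kappa)$ in its path metric'' to ``$C$ is convex in $M_\kappa^n$''. The key point is to show that the inclusion $C\hookrightarrow M_\kappa^n$ preserves distances, i.e.\ $d_C(p,q)=d_{M_\kappa^n}(p,q)$ for all $p,q\in C$, and that the $d_C$-geodesic between $p$ and $q$ is the genuine geodesic of $M_\kappa^n$; convexity follows immediately. I would argue as follows. Fix $p,q\in C$. By completeness and local compactness of $(C,d_C)$, together with the Hopf--Rinow-type theorem for such spaces, there is a $d_C$-shortest path $\gamma:[0,L]\to C$ from $p$ to $q$. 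I claim $\gamma$ is a geodesic of $M_\kappa^n$. For each interior point $\gamma(t)$, there is a convex neighborhood $U_{\gamma(t)}$ in $C$; on the corresponding subinterval $\gamma$ is a shortest path inside the convex set $U_{\gamma(t)}\subset M_\kappa^n$, hence (a subarc of) an honest geodesic of $M_\kappa^n$ of length $<D_\kappa$. Thus $\gamma$ is locally geodesic in $M_\kappa^n$; I would then invoke the standard fact that a locally geodesic curve in a space of curvature $\ge\kappa$ (here the ambient $M_\kappa^n$, but more to the point the intrinsic $\textup{Alex}(\kappa)$ space $C$) of length $\le D_\kappa$ is actually globally geodesic — no local geodesic can branch, and the angle comparison inequality forbids a local geodesic that is not shortest. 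This is where the assumption that $C$ is not a $1$-manifold when $\kappa>0$ gets used: it is precisely the case excluded in the definition of $\textup{Alex}(\kappa)$ so that the Globalization Theorem and the ``local geodesics of length $\le D_\kappa$ are minimizing'' principle apply; for a circle of length $>\pi$ one would have a spurious long geodesic.

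\medskip

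I expect the main obstacle to be the careful handling of this last passage: verifying that the intrinsic shortest path in $C$ actually equals the ambient geodesic segment $\overline{pq}$ in $M_\kappa^n$, and in particular that $\overline{pq}$ lies entirely in $C$. Concretely, once $\gamma$ is known to be a locally geodesic curve of $M_\kappa^n$ joining $p$ to $q$, and $d_{M_\kappa^n}(p,q)\le d_C(p,q)=\operatorname{length}(\gamma)\le D_\kappa$, one must rule out the possibility that $\gamma$ wraps around or is non-minimizing in $M_\kappa^n$; for $\kappa\le0$ this is automatic since $M_\kappa^n$ is uniquely geodesic, and for $\kappa>0$ it follows from the diameter bound $D_\kappa=\pi/\sqrt\kappa$ on geodesics combined with the fact that a locally geodesic curve of length $\le D_\kappa$ in $M_\kappa^n$ is minimizing. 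Therefore $\gamma=\overline{pq}\subset C$, so $C$ contains the geodesic between any two of its points, i.e.\ $C$ is convex in $M_\kappa^n$. I would also remark that connectedness of $C$ is what guarantees $d_C$ is finite-valued (so that the above applies to every pair $p,q$), and that local compactness plus completeness is exactly what the Hopf--Rinow argument needs to produce the shortest path $\gamma$ in the first place.
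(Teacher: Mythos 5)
Your proposal is correct and follows essentially the same line as the paper's own proof: locally convex $\Rightarrow$ locally $\textup{Alex}(\kappa)$, apply the Globalization Theorem to obtain diameter $\le D_\kappa$, use Hopf--Rinow to produce an intrinsic shortest path, show local convexity makes it a local geodesic of $M_\kappa^n$, and conclude from the diameter bound that it is a global geodesic. The only cosmetic difference is that the paper attributes the final step (local geodesics of length $\le D_\kappa$ are global geodesics) to the simple-connectedness of $M_\kappa^n$ while you phrase it in terms of the minimizing property of such local geodesics; these amount to the same standard fact.
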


\begin{proof} Because $C$ is locally convex in $M_\kappa^n$ (and is
not a $1$-dimensional manifold in case $\kappa>0$), $C$ is locally
$\textup{Alex}(\kappa)$. If $C$ is complete with respect to the
induced length metric, the globalization theorem tells us that $C$
is an $\textup{Alex}(\kappa)$ space of diameter $\le D_\kappa$. Let
$p$ and $q$ be two points of $C$. Because $C$ is connected, complete
and locally compact with respect to the induced path metric, $C$
satisfies the assumption of the Hopf-Rinow Theorem (see for example
\cite{bh}) and hence there is a geodesic $[p,q]_C$ \emph{in} $C$
joining $p$ and $q$. As $C$ is locally convex, however, this curve
$[p,q]_C$ has to be a local geodesic in $M_\kappa^n$. Since $C$ has
diameter $\le D_\kappa$, the length of $[p,q]_C$ is at most
$D_\kappa$. It follows from the simple-connectedness of $M_\kappa^n$
that $[p,q]_C$ is a (global) geodesic in $M_\kappa^n$.
\end{proof}

\section{Main objects} \label{sec:complex}

We define metric polyhedral complexes which are locally isometric to
$\mathbb{S}^n$. Our presentation follows that of
$M_\kappa$--polyhedral complexes in \cite{bh}, where $\kappa=1$ in
our case. We consider subcomplexes of such polyhedral complexes that
embed isometrically into $\mathbb{S}^n$ as topological balls, and
present a convexity criterion for them. We also study special
subcomplexes called stars and residues.

\subsection{Complexes}

\begin{definition}[$n$-complexes] \label{def:complex}
Given a family $\{P_i : i\in \mathcal{I}\}$ of $n$-polytopes in
$\S^n$, let $X$ be a connected $n$-manifold (possibly with non-empty
boundary $\partial X$) which is obtained by gluing together members
of $\{P_i\}$ along their respective facets by isometries. We denote
by $\sim$ the equivalence relation on the disjoint union
$\bigsqcup_{i\in \mathcal{I}}P_i$ induced by this gluing so that
$$
X={\bigsqcup_{i\in \mathcal{I}} P_i} / \sim.
$$
Let $\pi:\bigsqcup_{i\in \mathcal{I}} P_i\to X$ be the natural
projection and denote $\pi_i=\pi\vert_{P_i}$. We call the manifold
$X$ a \emph{spherical polytopal $n$-complex} (\emph{$n$-complex},
for short) provided that
\renewcommand{\labelenumi}{(\arabic{enumi})}
\begin{enumerate}
\item
the family $\{\pi_i(P_i)\,|\,i\in \mathcal{I}\}$ is locally finite;
\item
it is endowed with the quotient metric associated to the projection
$\pi$;
\item
its interior $X^\circ$ is locally isometric to $\S^n$;
\item
it is simply-connected.
\end{enumerate}
\end{definition}

For each $n$-complex $X$ the conditions (3) and (4) guarantee that
there is an associated \emph{developing map}
$$
dev:X\to\S^n
$$
which is a local isometry on the interior of $X$ and which extends
naturally to the boundary of $X$. The developing map is well-defined
up to post-composition with an isometry of $\S^n$.

\begin{convention}
Whenever we mention an $n$-complex $X$, we shall tacitly assume that
a developing map $dev:X\to\S^n$ for $X$ is already chosen. Given a
subset $K\subset X$, we shall denote by $K_\S$ the image $dev(K)$ of
$K$ under this developing map $dev$.
\end{convention}

Let $X$ be an $n$-complex. A subset $f\subset X$ is called an
\emph{$m$-cell} if it is the image $\pi_i(f_i)$ for some $m$-face
$f_i$ of $P_i$; the interior of $f$ is the image under $\pi_i$ of
the interior of $f_i$. The $0$-cells, $1$-cells, $(n-2)$-cells and
$(n-1)$-cells of $X$ are also called \emph{vertices}, \emph{edges},
\emph{ridges} and \emph{facets} of $X$, respectively. Two $m$-cells
$f_1$ and $f_2$ of $X$ are said to be \emph{adjacent} if their
intersection $f_1\cap f_2$ is an $(m-1)$-cell of $X$. A
\emph{subcomplex} of $X$ is a union of cells of $X$.

\subsection{Links in complexes} \label{ssec:links}

Let $X$ be an $n$-complex. For each $m$-cell $e$ of $X$ with $m<n$,
we denote $\mathcal{I}(e)=\{i\in \mathcal{I}\,|\,e\subset
\pi_i(P_i)\}$. The link $\L(e;X)$ of $e$ in $X$ is an
$(n-m-1)$-complex defined as follows.

Let $\s$ be a facet of $X$ containing $e$ and let
$\mathcal{I}(\s)=\{j,k\}\subset \mathcal{I}(e)$. For each $i\in
\mathcal{I}(\s)$ let $e_i$ and $\s_i$ be faces of $P_i$ such that
$\pi_i(e_i)=e$ and $\pi_i(\s_i)=\s$. By definition of $n$-complex,
the facets $\s_j$ and $\s_k$ are isometric by an isometry
$\phi_{jk}$ which restricts to an isometry between $e_j$ and $e_k$.
Then $\phi_{jk}$ induces an isometry between $(\s_j)_{(e_j;P_j)}$
and $(\s_k)_{(e_k;P_k)}$. Because $(\s_i)_{(e_i;P_i)}$ is a facet of
the polytope $\L(e_i;P_i)$ for each $i\in \mathcal{I}(\s)$, this
shows that the equivalence relation $\sim$ on $\bigsqcup_{i\in
\mathcal{I}} P_i$ induces an equivalence relation $\sim_\s$ on
$\L(e_j;P_j)\bigsqcup\L(e_k;P_k)$. Combining all equivalence
relations $\sim_\s$ for all facets $\s$ of $X$ containing $e$, we
obtain an equivalence relation $\sim_e$ on $\bigsqcup_{i\in
\mathcal{I}(e)}\L(e_i;P_i)$. The \emph{link} $\L(e;X)$ of $e$ in $X$
is then defined as
$$
\L(e;X)=\bigsqcup_{i\in \mathcal{I}(e)}\L(e_i;P_i)/\sim_e
$$
and is an $(n-m-1)$-complex endowed with the quotient metric
associated to the natural projection $\bigsqcup_{i\in
\mathcal{I}(e)}\L(e_i;P_i)\to\L(e;X)$ induced by $\sim_e$. Indeed,
because $X$ is a manifold, if $e$ is contained in the boundary of
$X$ then the link $\L(e;X)$ is isometric to a ball in $\S^{n-m-1}$;
otherwise, it is isometric to the sphere $\S^{n-m-1}$. Thus it is
simply-connected and its interior is locally isometric to the sphere
$\S^{n-m-1}$.

Let $X$ be an $n$-complex. We can extend the identity \eqref{links2}
(which is obtained from Lemma~\ref{links1}) to the current setting
as follows. Let $e\subsetneq f$ be cells of $X$. Keeping the same
notation as above, we recall that the link $\L(e;X)$ is the quotient
of $\bigsqcup_{i\in \mathcal{I}(e)}\L(e_i;P_i)$ by $\sim_e$, where
$e_i$ is a face of $P_i$ such that $\pi_i(e_i)=e$ for each $i\in
\mathcal{I}(e)$. Consider $\mathcal{I}(f)=\{i\in
\mathcal{I}\,|\,f\subset \pi_i(P_i)\}\subset \mathcal{I}(e)$. For
each $i\in \mathcal{I}(f)$ let $f_i$ be the face of $P_i$ such that
$\pi_i(f_i)=f$. Now by Lemma~\ref{links1} we have that
$(f_i)_{(e_i;P_i)}$ is a face of $\L(e_i;P_i)$ for each $i\in
\mathcal{I}(f)$. Since $\sim_e$ identifies all $(f_i)_{(e_i;P_i)}$
for $i\in \mathcal{I}(f)$, we may define
\begin{align} \label{link-cell}
f_{(e;X)}=\pi_i({f_i}_{(e_i;P_i)})
\end{align}
for any chosen $i\in \mathcal{I}(f)$ and it follows that $f_{(e;X)}$
is a cell of the complex $\L(e;X)$. From the identity \eqref{links2}
we see that the equivalence relation $\sim_{f_{(e;X)}}$ on
$\bigsqcup_{i\in \mathcal{I}(f)}\L({f_i}_{(e_i;P_i)};\L(e_i;P_i))$,
which is by definition induced from $\sim_e$, is equal to the
equivalence relation $\sim_f$ on $\bigsqcup_{i\in
\mathcal{I}(f)}\L(f_i;P_i)$. It now follows that
\begin{align} \label{isom'}
\L(f;X)&=\bigsqcup_{i\in \mathcal{I}(f)}\L(f_i;P_i)/\sim_f \notag\\
&=\bigsqcup_{i\in\mathcal{I}(f)}\L({f_i}_{(e_i;P_i)};\L(e_i;P_i))/\sim_{f_{(e;X)}}\notag\\
&=\L(f_{(e;X)};\L(e;X)).
\end{align}

\subsection{Polyballs}

Recall that an $n$-complex is equipped with a developing map into
$\S^n$.
\begin{definition}[Polyballs] \label{def:polyball}
An \emph{$n$-polyball} $B$ is an $n$-complex which is topologically
an $n$-dimensional ball with boundary and whose developing map
$$
dev:B\hookrightarrow\S^n
$$
is an isometric embedding into $\S^n$. An $n$-polyball $B$ is said
to be \emph{convex} (resp. \emph{locally convex}) if its developing
image $B_\S=dev(B)$ is a convex (resp. locally convex) subset of
$\S^n$.
\end{definition}

Being compact, an $n$-polyball consists of a finite number of
$n$-cells. In particular, a single $n$-cell is itself an
$n$-polyball. If $X$ is an $n$-complex with boundary and $f$ is an
$m$-cell in the boundary of $X$, then the link $\L(f;X)$ is an
$(n-m-1)$-polyball.

Let $B$ be a fixed $n$-polyball from now on. Because $B$ consists of
a finite number of $n$-cells $P$ and because their images $P_\S$ are
compact convex subsets of $\S^n$, its image $B_\S$ in $\S^n$ is
compact with respect to the path metric induced from that of the
sphere $\S^n$. Thus if we know that $B$ is locally convex, then it
follows from Corollary~\ref{cor:locglo} (applied to $M_1^n=\S^n$)
that $B$ is convex. See Lemma~\ref{lem:ridge} below. Therefore, to
establish convexity of $B$, it suffices to investigate local
convexity of $B$.

Because the $n$-polyball $B$ is a manifold, its local convexity
matters only at its boundary points. Because of the polyhedral
structure of $B$, however, it suffices to investigate the links of
cells in the boundary of $B$. More precisely, let $x$ be a point in
the boundary of $B$. There is a unique cell $f$ of $B$ that contains
$x$ as its interior point. The local geometry of $B$ at $x$ is
completely determined by the union of $n$-cells containing $f$,
whose geometry is then captured by the link of $f$ in $B$. Thus
$B_\S$ is locally convex at $x_\S$ if and only if the link $\L(f;B)$
is a convex polyball. Therefore, $B$ is locally convex if and only
if the links $\L(f;B)$ are convex polyballs for all cells $f$ in the
boundary of $B$. This last condition holds for facets $\s$ in the
boundary of $B$ since the link $\L(\s;B)$ is just a singleton of
$\S^0$ and hence convex. Thus we are left with cells of dimension at
most $n-2$. It turns out that only $(n-2)$-cells, i.e. the ridges of
$B$, need to be investigated.

Let $f$ be an $m$-cell in the boundary of $B$. The link $\L(f;B)$ of
$f$ is an $(n-m-1)$-polyball. On the other hand, if $v$ is a vertex
of $f$, then $f$ descends to an $(m-1)$-cell $f_{(v;B)}$ in the link
$\L(v;B)$ of $v$. The link $\L(v;B)$ is an $(n-1)$-polyball with
$f_{(v;B)}$ in its boundary. From \eqref{isom'} of the previous
subsection, we have the following identity between the two
$(n-m-1)$-polyballs
\begin{align}\label{isom''}
\L(f;B)=\L(f_{(v;B)};\L(v;B)).
\end{align}
Therefore, the link $\L(v;B)$ of the vertex $v$ contains all the
information about the links $\L(f;B)$ of those cells $f$ which
contain $v$. In particular, if the link $\L(v;B)$ of $v$ is a convex
$(n-1)$-polyball then the link $\L(f;B)$ of $f$ is also a convex
$(n-m-1)$-polyball.

Conversely, the proof of the lemma below shows that if the links
$\L(e;B)$ are convex for all ridges $e$ of $B$ in the boundary of
$B$, then $\L(v;B)$ is convex for every boundary vertex $v$.

\begin{lemma} \label{lem:ridge}
Let $B$ be an $n$-polyball. If the links $\L(e;B)$ are convex for
all ridges $e$ contained in the boundary of $B$, then $B$ is convex.
\end{lemma}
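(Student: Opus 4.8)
The plan is to reduce convexity of the $n$-polyball $B$ to local convexity (via Corollary~\ref{cor:locglo}), and then reduce local convexity to convexity of the ridge links via the dimension-reduction identity \eqref{isom''}. First I would set up the reduction to local convexity: since $B$ consists of finitely many $n$-cells $P$ whose images $P_\S$ are compact convex subsets of $\S^n$, the developing image $B_\S$ is compact, hence complete and locally compact with respect to the induced path metric. Also $B$ is a genuine $n$-manifold with $n\ge 2$, so it is not a $1$-dimensional manifold. Therefore, by Corollary~\ref{cor:locglo} applied to $M_1^n=\S^n$, once we know $B$ is locally convex (and connected, which it is, being a ball) we conclude $B$ is convex. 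So it remains to prove local convexity under the hypothesis that $\L(e;B)$ is convex for all ridges $e$ in $\partial B$.

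Next I would carry out the descent through codimensions. As already observed in the text preceding the lemma, $B$ is locally convex if and only if $\L(f;B)$ is a convex polyball for every cell $f$ in $\partial B$; and this holds automatically when $f$ is a facet (the link is a point) and when $f$ is an $n$-cell (no boundary issue), so only cells of dimension $\le n-2$ remain. The strategy is downward induction on the codimension (equivalently, upward induction on $\dim f$ among cells of dimension $\le n-2$): I claim it suffices to show $\L(v;B)$ is a convex $(n-1)$-polyball for every boundary vertex $v$. Indeed, for a general $m$-cell $f$ in $\partial B$ with $m\le n-2$, pick a vertex $v$ of $f$; then by \eqref{isom''} we have $\L(f;B)=\L(f_{(v;B)};\L(v;B))$, and if $\L(v;B)$ is a convex $(n-1)$-polyball then its subcomplex-link $\L(f_{(v;B)};\L(v;B))$ is a convex $(n-m-1)$-polyball as well (links of boundary cells inside a convex polyball are convex, since the developing map of the link is just the restriction of the local isometry and the image is a convex subset of the appropriate subsphere). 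So the whole problem collapses to boundary vertices.

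Finally, to handle a boundary vertex $v$: its link $L:=\L(v;B)$ is an $(n-1)$-polyball, and I want to apply the same machinery one dimension down. The ridges $e$ of $B$ containing $v$ descend to facets... no — more carefully, an $(n-2)$-cell $e\supset v$ of $B$ in $\partial B$ descends to an $(n-3)$-cell $e_{(v;B)}$ in $L$? That is not a ridge of $L$. The cleaner route is: the $(n-2)$-cells (ridges) $e$ of $B$ with $v\in e$ and $e\subset\partial B$ correspond to the ridges of $L=\L(v;B)$ that lie in $\partial L$ — wait, $\dim L = n-1$, so ridges of $L$ are $(n-3)$-cells, whereas $e_{(v;B)}$ has dimension $(n-2)-1 = n-3$. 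Good, so indeed $e_{(v;B)}$ is a ridge of $L$. By \eqref{isom''}, $\L(e;B)=\L(e_{(v;B)};L)$, and by hypothesis this is convex; hence all ridge-links of $L$ that lie in $\partial L$ are convex. Now $L$ is an $(n-1)$-polyball, so it is again compact, complete, locally compact with respect to its path metric; if $n-1\ge 2$ we may invoke the lemma recursively (induction on $n$) to conclude $L$ is convex, and if $n-1=1$ then $L$ is a $1$-polyball, i.e. an arc in $\S^1$ of length $<\pi$... actually I must be careful here since Corollary~\ref{cor:locglo} excludes $1$-manifolds for $\kappa>0$; but an $(n-1)=1$-polyball whose ridge-links (the $(-1)$-cells — vacuous) impose no condition is by definition a single edge or a union of edges developing to an arc, and local convexity at the interior boundary vertices must be checked separately. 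The base case $n=2$ is the main obstacle and deserves a direct argument: when $n=2$, a ridge is a vertex, $B$ is a $2$-polyball (a disc tiled by convex spherical polygons), the hypothesis says the link of each boundary vertex — an arc in $\S^1$ — is convex, i.e. has length $\le\pi$, which is exactly the statement that $B_\S$ has no reflex boundary vertex; then $B_\S$ is locally convex as a subset of $\S^2$ and Corollary~\ref{cor:locglo} (now applicable, as $B$ is genuinely $2$-dimensional) gives convexity. With the base case $n=2$ in hand, the induction on $n$ closes: for $n\ge 3$, convexity of all boundary ridge-links of $B$ gives, via the descent above, convexity of all boundary ridge-links of each vertex-link $L$, which is an $(n-1)$-polyball, so by the inductive hypothesis $L$ is convex; hence every $\L(f;B)$ is convex, $B$ is locally convex, and Corollary~\ref{cor:locglo} finishes the proof. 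I expect the bookkeeping in matching ``ridges of $B$ through $v$'' with ``boundary ridges of $\L(v;B)$'' via \eqref{isom''}, together with getting the $n=2$ base case cleanly stated, to be the only real points requiring care.
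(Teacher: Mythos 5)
Your proposal is correct and follows essentially the same route as the paper: induction on $n$ with base case $n=2$ handled via Corollary~\ref{cor:locglo}, and in the inductive step passing to vertex links $\L(v;B)$, which are $(n-1)$-polyballs whose boundary ridge links are identified with the given ridge links of $B$ via \eqref{isom''}, so the inductive hypothesis yields their convexity and hence local convexity of $B$. The self-corrections in your write-up all land in the right place; the finished argument matches the paper's proof in structure and in the use of each ingredient.
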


\begin{proof}
We shall prove the lemma by induction on the dimension $n$ of $B$.
In the base case when $n=2$, the ridges of $B$ are just vertices of
$B$. From the above discussion we see that $B$ is locally convex. By
Corollary~\ref{cor:locglo}, $B$ is convex.

Suppose now that the assertion is true for polyballs of dimension
$\le n-1$. Let $B$ be an $n$-polyball and assume that the links
$\L(e;B)$ are convex for all ridges $e$ contained in the boundary of
$B$. Let $v$ be a vertex in the boundary of $B$. Then the link
$\L(v;B)$ is an $(n-1)$-polyball and its ridges are those
$e_{(v;B)}$ which come from the ridges $e$ of $B$ that contain $v$.
The ridges $e_{(v;B)}$ are in the boundary of $\L(v;B)$ if and only
if the ridges $e$ are in the boundary of $B$. Because $\L(e;B)$ is
assumed to be convex, it follows from \eqref{isom''} that
$\L(e_{(v;B)};\L(v;B))$ is convex, too. Hence the induction
hypothesis applies and we conclude that $\L(v;B)$ is convex. Since
$v$ is arbitrary, this implies that $B$ is locally convex. By
Corollary~\ref{cor:locglo} once again, we conclude that $B$ is
convex. The induction steps are complete.
\end{proof}

\subsection{Stars and residues}

Let $X={\bigsqcup_{i\in I} P_i} / \sim$ be a fixed $n$-complex
throughout this subsection. We shall define two kinds of
subcomplexes of $X$ called stars and residues. In most cases later
on they will be $n$-polyballs in their own right.

\begin{definition}[Stars and residues\footnote{Our definition of
star seems to be somewhat non-standard. We borrowed the term
"residue" from \cite{js2}, where residues are defined in the same
way as in the present paper.}] \label{def:star-residue} Let
$Y\subset X$ be a subcomplex and let $\s\subset Y$ be a cell or a
subcomplex of $X$.
\renewcommand{\labelenumi}{\textup{(\arabic{enumi})}}
\begin{enumerate}
\item
The \emph{star} $st(\s;Y)$ of $\s$ in $Y$ is the union of the cells
of $Y$ that \textsl{intersect} $\s$.
\item
The \emph{residue} $res(\s;Y)$ of $\s$ in $Y$ is the union of the
cells of $Y$ which \textsl{contain} $\s$.
\end{enumerate}
We set $st^0(\s;Y)=\s$ and define $st^{k+1}(\s;Y)=st(st^k(\s;Y);Y)$
inductively. In case $Y=X$ we simply denote $st^k(\s)=st^k(\s;X)$
and $res(\s)=res(\s;X)$. Notice that $st(v)=res(v)$ for vertices $v$
of $X$.
\end{definition}

Let $Y_1$ and $Y_2$ be subcomplexes of $X$. The following relations
are immediate from the definition of star.
\begin{align}
st(Y_1\cup Y_2)=st(Y_1)\cup st(Y_2);\label{st1}\\
st(Y_1\cap Y_2)\subset st(Y_1)\cap st(Y_2).\label{st2}
\end{align}

Iterated stars satisfy the following properties. Let $P_0$ be an
$n$-cell in $X$ and let $\mathcal{V}$ be the set of all vertices in
$P_0$. It follows directly from the definition that
\begin{align} \label{*}
P_0=\bigcap_{v\in\mathcal{V}}st(v)\quad\text{and}\quad
st(P_0)=\bigcup_{v\in\mathcal{V}}st(v).
\end{align}
Let $\mathcal{P}$ be the set of all $n$-cells in $st(P_0)$. We claim
that for each $k\ge1$
\begin{align} \label{**}
P_0\subset\bigcap_{P\in\mathcal{P}}st^k(P)\quad\text{and}\quad
st^{k+1}(P_0)=\bigcup_{P\in\mathcal{P}}st^k(P).
\end{align}
The former inclusion is obvious. We can see the latter equality
using induction on $k$. The base case $k=1$ follows immediately from
the definition. Suppose it is true up to $k-1$. We then have
$st^{k+1}(P_0)=st(st^k(P_0))=st(\bigcup_\mathcal{P}
st^{k-1}(P))=\bigcup_\mathcal{P} st(st^{k-1}(P))=\bigcup_\mathcal{P}
st^k(P)$, where the third equality follows from \eqref{st1}. See
Figure~\ref{fig:stars} (a). Using properties \eqref{*} and
\eqref{**} we can prove the following lemma.

\begin{figure}[htbp]
\includegraphics[width=1\linewidth]{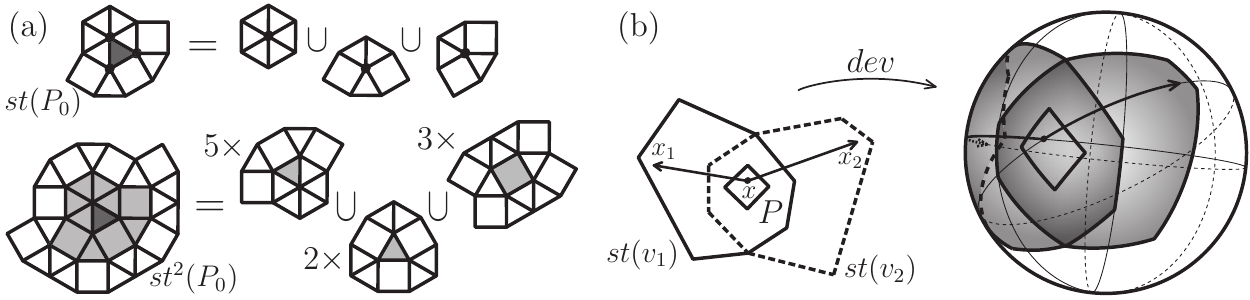}
\caption{\footnotesize (a) Illustrations of \eqref{*} and
\eqref{**}. (b) Proof of Lemma~\ref{lem:stars}.} \label{fig:stars}
\end{figure}

\begin{lemma} \label{lem:stars}
Let $X$ be an $n$-complex.
\renewcommand{\labelenumi}{\textup{(\arabic{enumi})}}
\begin{enumerate}
\item
If $st(v)$ is a convex $n$-polyball for all vertices $v$ of $X$,
then $st(P)$ is an $n$-polyball for each $n$-cell $P$ in $X$.
\item
For each fixed $k\ge1$, if $st^k(P)$ is a convex $n$-polyball for
all $n$-cells $P$ in $X$, then $st^{k+1}(P)$ is an $n$-polyball.
\end{enumerate}
\end{lemma}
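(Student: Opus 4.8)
The plan is to prove both statements by the same local-to-global mechanism supplied by Lemma~\ref{lem:ridge}: an $n$-complex which is topologically a ball, whose developing map is an isometric embedding, is a convex polyball as soon as its links at boundary ridges are convex. So in each case I first check that the subcomplex in question is a topological $n$-ball with embedded developing map (i.e.\ an $n$-polyball), and then I reduce convexity of the big star to convexity of the smaller, already-convex stars via the link identity \eqref{isom''}.

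For part (1): fix an $n$-cell $P$, and recall from \eqref{*} that $st(P)=\bigcup_{v\in\mathcal V}st(v)$ where $\mathcal V$ is the vertex set of $P$, and $P=\bigcap_{v\in\mathcal V}st(v)$. Each $st(v)$ is by hypothesis a convex $n$-polyball, so its developing image is a compact convex subset of $\S^n$ containing $P_\S$; the key point is that these convex pieces overlap coherently along $P_\S$, so their union develops injectively. Concretely, I would argue that $dev$ is injective on $st(P)$ by showing any two $n$-cells $P_1,P_2$ of $st(P)$ lie in a common $st(v)$ (they each meet $P$, hence each contains a vertex; if they share a vertex this is immediate, and if not one uses that $P$ itself is convex so that the ``straight-line'' coordinates from $P$ identify the developed pictures). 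Local finiteness (Definition~\ref{def:complex}(1)) makes $st(P)$ a finite union of $n$-cells, hence compact; and since $X$ is a manifold and each $st(v)$ is a ball glued to the others along the ball $P$, the union $st(P)$ is a topological $n$-ball with boundary. Thus $st(P)$ is an $n$-polyball. (Note I do \emph{not} claim $st(P)$ is convex here — only that it is a polyball; that is all part (1) asserts.)

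For part (2): fix $k\ge1$ and an $n$-cell $P$, and use \eqref{**}, namely $st^{k+1}(P)=\bigcup_{P'\in\mathcal P}st^k(P')$ where $\mathcal P$ is the set of $n$-cells in $st(P)$, together with $P\subset\bigcap_{P'\in\mathcal P}st^k(P')$. By hypothesis each $st^k(P')$ is a convex $n$-polyball, so again the developed images are compact convex sets all containing $P_\S$ in their interior/boundary, and they are glued along the subcomplexes $st^k(P')\cap st^k(P'')$ which contain $P$. As in part (1), finiteness plus local finiteness gives compactness, and the manifold property of $X$ together with the fact that each $st^k(P')$ is a ball containing the common ball-neighborhood of $P$ forces the union to be a topological $n$-ball. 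The only real work is injectivity of $dev$ on $st^{k+1}(P)$: I would take two $n$-cells $Q_1,Q_2$ of $st^{k+1}(P)$, find (from the definition of iterated star and \eqref{**}) a single $st^k(P')$ containing both whenever their developed images could potentially overlap, and use convexity of that $st^k(P')$ to conclude the images coincide only where the abstract cells do. Chaining these identifications across the overlapping family $\{st^k(P')\}_{P'\in\mathcal P}$, all pinned to the common convex core $P$, yields a globally well-defined isometric embedding.

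The main obstacle in both parts is precisely this injectivity/coherence argument: one must rule out that two $n$-cells far apart in the complex develop onto overlapping regions of $\S^n$ in a way not detected by any single convex sub-star. The resolution is that every $n$-cell of $st^{k+1}(P)$ is reachable from $P$ through a chain lying in one of the convex polyballs $st^k(P')$ (this is what \eqref{**} encodes), and an isometric embedding of a \emph{connected} convex set is determined by its value on any open subset; so the developed picture of the whole union is forced, cell by cell, by its restriction to the convex core $P_\S$. Once injectivity and the topological-ball property are in hand, the statement ``$st^{k+1}(P)$ is an $n$-polyball'' follows directly from Definition~\ref{def:polyball}, with no further appeal to Lemma~\ref{lem:ridge} needed (that lemma will be invoked later, in the separate argument upgrading polyball to \emph{convex} polyball).
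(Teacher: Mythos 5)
Your overall framing — check that the developing map is injective on the big star and that the image is a topological ball, then invoke Definition~\ref{def:polyball} — matches the paper's plan exactly. But the injectivity step, which you yourself single out as ``the only real work,'' is where your proposal has a genuine gap, and the gap is not small. You propose to show that any two $n$-cells $P_1,P_2$ of $st(P)$ lie in a common $st(v)$. This is simply false in general: $P_1$ may meet $P$ only at a vertex $v_1$ and $P_2$ only at a different vertex $v_2$, so $P_1\subset st(v_1)$ but $P_1\not\subset st(v_2)$ and vice versa. Your fallback (``one uses that $P$ itself is convex so that the straight-line coordinates from $P$ identify the developed pictures,'' and in part~(2) ``chaining these identifications across the overlapping family\ldots\ pinned to the common convex core'') gestures at a monodromy/analytic-continuation argument but never supplies one. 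The problem is precisely that two cells reachable from $P$ through \emph{different} chains inside different convex stars could, a priori, develop onto overlapping regions of $\S^n$; saying that $dev$ is determined by its restriction to $P^\circ$ shows that $dev$ is well-defined, not that it is injective.

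The paper closes this gap with a short spherical-geometry argument that your proposal never reaches. Given $x_1\in st(v_1)\setminus st(v_2)$ and $x_2\in st(v_2)\setminus st(v_1)$, pick any $x\in P^\circ$. Since $P^\circ\subset st(v_i)$ for all $i$ (the first identity in \eqref{*}) and each $st(v_i)_\S$ is convex, the geodesic segment $[(x)_\S,(x_i)_\S]$ in $\S^n$ lies inside $st(v_i)_\S$ and, because $(x)_\S$ is an interior point of that proper convex set, has length strictly less than $\pi$. If the two segments had the same initial direction at $(x)_\S$, one would contain the other and you'd deduce $(x_1)_\S\in st(v_2)_\S$, a contradiction; and two geodesics of length $<\pi$ emanating from a common point in different directions on $\S^n$ meet only at that point, so $(x_1)_\S\neq(x_2)_\S$. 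This is the key mechanism, and part~(2) repeats it verbatim with \eqref{**} in place of \eqref{*}. Your proposal correctly identifies the reductions \eqref{*} and \eqref{**} and the target statement, but you need this geodesic argument (or an equally concrete substitute) to make the injectivity claim go through; as written, it does not.

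One further remark: in part~(1) you state that $st(P)$ is a topological ball because ``each $st(v)$ is a ball glued to the others along the ball $P$.'' That is also not an adequate argument on its own (a union of balls sharing a common sub-ball need not be a ball without further control). In the paper this conclusion is drawn \emph{after} injectivity has been established, from the fact that $st(P)_\S$ is a union of convex sets whose common intersection has non-empty interior — so the order of the argument matters, and your proposal inverts it.

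Finally, a small inaccuracy in your last paragraph: you say no appeal to Lemma~\ref{lem:ridge} is needed here. That is correct and agrees with the paper, but it does not compensate for the missing injectivity argument above.
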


\begin{proof}
Recall that we have a developing map $dev:X\to\S^n$ of the
$n$-complex $X$ and we denote $K_\S=dev(K)$ for $K\subset X$.

(1) Let $P$ be an $n$-cell of $X$. Let $x_1,x_2\in st(P)$ be such
that $x_1\neq x_2$. We want to show that $(x_1)_\S\neq (x_2)_\S$.
Let $\mathcal{V}$ be the set of all vertices in $P$. The second
identity of \eqref{*} implies that there are vertices
$v_1,v_2\in\mathcal{V}$ such that $x_1\in st(v_1)$ and $x_2\in
st(v_2)$. If $x_1,x_2\in st(v_1)\cap st(v_2)$ then $(x_1)_\S\neq
(x_2)_\S$, because $st(v_1)\cap st(v_2)\subset st(v_1)$ and
$st(v_1)$ is a polyball and hence $dev|_{st(v_1)}$ is an embedding.
Thus we may assume from now on that $x_1\in st(v_1)\setminus
st(v_2)$ and $x_2\in st(v_2)\setminus st(v_1)$. See
Figure~\ref{fig:stars} (b).

Fix $i=1,2$. Consider the interior $P^\circ$ of $P$ and choose a
point $x\in P^\circ$. Consider the geodesic segment
$[(x)_\S,(x_i)_\S]$ in $\S^n$. Because $st(v_i)$ is a convex
polyball and because $(x)_\S\in(P^\circ)_\S\subset st(v_i)_\S$ by
the first identity of \eqref{*}, we must have that
$$
[(x)_\S,(x_i)_\S]\subset st(v_i)_\S.
$$
Furthermore, the length of $[(x)_\S,(x_i)_\S]$ is less than $\pi$,
since the diameter of the convex (proper) subset $st(v_i)_\S$ is at
most $\pi$ and $(x)_\S$ is an interior point of $st(v_i)_\S$.

If the initial directions at $(x)_\S$ of $[(x)_\S,(x_1)_\S]$ and
$[(x)_\S,(x_2)_\S]$ coincide, say,
$$
[(x)_\S,(x_1)_\S]\subset[(x)_\S,(x_2)_\S]\subset st(v_2)_\S,
$$
then we have $(x_1)_\S\in st(v_2)_\S$, contradictory to $x_1\in
st(v_1)\setminus st(v_2)$. Thus the initial directions at $(x)_\S$
of the two geodesic segments must be different. Because their
lengths are less than $\pi$, however, this implies that they
intersect only at $(x)_\S$, hence $(x_1)_\S\neq (x_2)_\S$.

Thus we have shown that $dev$ is injective when restricted to the
star $st(P)$. The identities in \eqref{*} again imply that
$st(P)_\S$ is a union of convex subsets $st(v)_\S$ whose
intersection has non-empty interior $(P^\circ)_\S$. Therefore, the
image $st(P)_\S$ is a topological ball, and this completes the proof
that $st(P)$ is an $n$-polyball.

(2) For each fixed $k\ge1$, the proof goes word-by-word in the same
manner as in (1), except we need to use \eqref{**} instead.
\end{proof}

The residue of a cell $e$ serves as a nice neighborhood of the
interior points of $e$. For example, let $B\subset X$ be a
subcomplex which is an $n$-polyball. If $e$ is a cell in the
boundary of $B$ and $x$ is an interior point of $e$, then $res(e;B)$
is a neighborhood of $x$ in $B$. Because the link of $e$ in $B$
depends only on the union of cells in $B$ that contain $e$, we have
$\L(e;B)=\L(e;res(e;B))$. Therefore, once we know that $res(e;B)$ is
a convex polyball, then we can conclude that $\L(e;B)$ is convex.

In view of Lemma~\ref{lem:ridge}, however, it is important for us to
study the residues of ridges of $X$. So let $e$ be a ridge of $X$
and consider its residue $res(e)=res(e;X)$. Because ridges are
$(n-2)$-dimensional, the link $\L(e,X)$ of $e$ is a $1$-complex
embedded in $\S^1$ with its vertices and $1$-cells coming from
$(n-1)$-cells and $n$-cells of $X$ containing $e$, respectively (see
\eqref{link-cell}). Indeed, the link $\L(e;X)$ is a circular arc or
the whole $\S^1$ depending on whether $e$ is in the boundary of $X$
or not. Thus we can give a linear (or cyclic) order in the set of
$n$-cells in $res(e)$ so that
\begin{align} \label{cyclic}
res(e)=P_1\cup P_2\cup\cdots\cup P_{d_e},
\end{align}
where $P_i$ and $P_{i+1}$ are adjacent and share a common facet
$\s_i=P_i\cap P_{i+1}$ (the indices are taken modulo $d_e$ in case
$\L(e;X)=\S^1$) and $\s_i\cap\s_j=e$ for $i\neq j$.

We conclude this section with the following property of residues,
which will lead to the definition of residual convexity in the next
section. Let $0\le m\le n-1$. Let $f$ be an $(m+1)$-cell of $X$ and
$\mathcal{H}$ be the set of all $m$-cells $h$ in $f$. We then have
\begin{align} \label{***}
res(f)=\bigcap_{h\in\mathcal{H}} res(h).
\end{align}
Indeed, the inclusion $res(f)\subset\bigcap_{h\in\mathcal{H}}
res(h)$ is clear. If $\s\subset\bigcap_{h\in\mathcal{H}} res(h)$ is
a cell, then $\s$ contains all $m$-cells in $f$. Thus $\s$
necessarily contains $f$ and hence $\s\subset res(f)$.

\section{Convexity} \label{sec:convexity}

This is the main section of the paper. Here we consider only those
$n$-complexes $X$ which have empty boundary. We shall introduce
local convexity conditions on $X$ called residual convexity and
strong residual convexity. Combined with the global condition that
$X$ is without boundary, these conditions enable us to show that $X$
is isometric to a convex proper domain in $\S^n$. We also provide a
simple combinatorial condition for a residually convex complex to be
strongly residually convex.

\subsection{Main theorem}

\begin{lemma} \label{lem:residue}
Let $X$ be an $n$-complex without boundary. The following conditions
on $X$ are equivalent to each other.
\renewcommand{\labelenumi}{\textup{(\arabic{enumi})}}
\begin{enumerate}
\item
The star $st(v)=res(v)$ is a convex $n$-polyball for every vertex
$v$ of $X$.
\item
For each fixed $k$ with $1\le k\le n-2$, the residue $res(f)$ is a
convex $n$-polyball for every $k$-cell $f$ of $X$.
\item
The residue $res(\s)$ is a convex $n$-polyball for every facet $\s$
of $X$.
\end{enumerate}
\end{lemma}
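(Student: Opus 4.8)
The plan is to deduce all the equivalences from two master implications. The three conditions concern residues of cells of dimension $0$ (condition (1), where $st(v)=res(v)$), of dimensions $1,\dots,n-2$ (conditions (2)), and of dimension $n-1$ (condition (3)). The \emph{upward} implication is that if the residue of every $j$-cell of $X$ is a convex $n$-polyball, then so is the residue of every cell of dimension $\ge j$; with $j=0$ this yields $(1)\Rightarrow(2)\,\&\,(3)$, and with $j=k$ it yields $(2_k)\Rightarrow(3)$. The \emph{downward} implication is that condition (3) forces the residue of \emph{every} cell of dimension $\le n-1$ to be a convex $n$-polyball, which yields $(3)\Rightarrow(1)\,\&\,(2)$. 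Chaining these gives all the asserted equivalences.

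The upward implication is routine. Iterating \eqref{***} expresses the residue of a $d$-cell $f$ as an intersection $res(f)=\bigcap\{res(h):h\text{ a face of }f,\ \dim h=j\}$ for any $j\le d$, so it suffices to prove that an $n$-dimensional finite intersection $A=B_1\cap\cdots\cap B_r$ of convex $n$-polyballs is again a convex $n$-polyball. Since $dev$ embeds $B_1$ isometrically, it is injective on $A\subset B_1$. Since $\partial X=\emptyset$, the developing map $dev$ is a local isometry — hence a local homeomorphism — on all of $X$, so lifts of paths are unique. Given $x,y\in A$, the geodesic $[x_\S,y_\S]$ lies in each $(B_i)_\S$ by convexity and lifts through $B_i$ to a path from $x$ to $y$; by uniqueness these lifts coincide, so their common value lies in $A$ and $[x_\S,y_\S]\subset A_\S$. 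Thus $A_\S$ is convex, and being compact, $n$-dimensional, and a proper subset of $\S^n$, it is a topological ball; so $A$ is a convex $n$-polyball.

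For the downward implication, assume (3) and induct downward on $\dim f$, the base case $\dim f=n-1$ being (3) itself. In the inductive step, suppose the residue of every $m$-cell is a convex $n$-polyball ($1\le m\le n-1$), and let $f$ be an $(m-1)$-cell; write $\mathcal G$ for the set of $m$-cells of $X$ containing $f$. Then $res(f)=\bigcup_{g\in\mathcal G}res(g)$, since each $res(g)\subset res(f)$ while every $n$-cell containing $f$ contains an $m$-face containing $f$. A path-lifting argument like the one above — using a point $z$ in the relative interior of $f$, which lies in every $res(g)$ — shows $dev$ is injective on $res(f)$; its image is compact and star-shaped about $z_\S$, which is interior to it because $\partial X=\emptyset$, hence a topological ball. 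So $res(f)$ is an $n$-polyball, and by Lemma~\ref{lem:ridge} it remains only to check that $\L(e;res(f))$ is convex for every ridge $e$ in $\partial(res(f))$.

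This last point is the heart of the argument. Such a ridge $e$ cannot contain $f$: if $e\supseteq f$ then every $n$-cell containing $e$ contains $f$, so the entire cyclic fan \eqref{cyclic} around $e$ lies in $res(f)$ and $e$ is an interior ridge of $res(f)$. Hence $e\not\supseteq f$; and then at most two $n$-cells of the fan \eqref{cyclic} around $e$ contain $f$, and they are adjacent, because any two non-adjacent members of the fan meet only in $e\not\supseteq f$, while a pairwise-adjacent triple in the fan can occur only when $d_e=3$, in which case $f\subseteq\s_i\cap\s_j=e$ — a contradiction. Consequently $\L(e;res(f))$ is either the link of $e$ in a single $n$-cell, an arc of length less than $\pi$ (its dihedral angle), or the link of $e$ in $res(\s)=P_i\cup P_{i+1}$ for the facet $\s=P_i\cap P_{i+1}$ flanked by $e$ — the link of $e$ in a convex set (convex by (3)), hence convex. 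Lemma~\ref{lem:ridge} now gives that $res(f)$ is convex, completing the induction. I expect this local-convexity step at the boundary ridges of $res(f)$ to be the main obstacle: everything else rests only on $\partial X=\emptyset$ making $dev$ a local homeomorphism with unique path lifting, whereas hypothesis (3) enters precisely through the two-cell case, which is available because the absence of boundary makes \eqref{cyclic} a full cycle and so flanks each boundary ridge of $res(f)$ by at most two $n$-cells.
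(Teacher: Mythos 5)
Your proof is correct, and its key ideas coincide with the paper's; the differences are organizational. For the upward implications, the paper simply invokes \eqref{***} and ``the intersection of convex subsets is convex'' to get $(1)\Rightarrow(2)\Rightarrow(3)$; you are more explicit and justify, via unique path lifting, why the developing image of the subcomplex intersection $res(f)=\bigcap_h res(h)$ equals $\bigcap_h res(h)_\S$. (Note, though, that your path-lifting step leans on $\partial X=\emptyset$, whereas the paper remarks in passing that the upward implications hold even with boundary.) For $(3)\Rightarrow(1)$ the paper argues directly: it first shows $st(v)$ is an $n$-polyball using only convexity of individual $n$-cells — no part of hypotheses (1)--(3) is needed for that — then observes that a boundary ridge $e$ of $st(v)$ cannot contain $v$ since $v$ is interior, so $res(e;st(v))$ is either a single $n$-cell or a residue $res(\sigma)$ of a facet, convex by (3); Lemma~\ref{lem:ridge} concludes. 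You instead run a downward induction on the cell dimension, from $n-1$ down to $0$. This is a valid generalization, and your analysis of which members of the cyclic fan \eqref{cyclic} around $e$ can meet $f$ (at most two, and adjacent) is exactly the paper's observation stated in greater generality. But the induction buys you nothing: as in the paper, the polyball step for $res(f)$ only needs convexity of individual $n$-cells and a point in $f^\circ$ (not the inductive hypothesis that residues of $m$-cells are convex polyballs), and the convexity step at boundary ridges always comes directly from (3), never from the previous inductive stage. So the paper's one-shot argument for $st(v)$ is leaner; you could drop the induction and specialize your fan analysis to $f=v$ to recover it.
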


\begin{proof}
Because the intersection of convex subsets is again a convex subset,
the implications (1)$\Rightarrow$(2) and (2)$\Rightarrow$(3) follow
from \eqref{***} inductively. In fact, these implications are true
without the assumption that $X$ is without boundary, which is needed
only in the proof of (3)$\Rightarrow$(1).

We first observe the following fact for an $n$-complex $X$ with or
without boundary. Namely, we claim that for each vertex $v$ of $X$
the star $st(v)$ is an $n$-polyball. The proof is essentially the
same as the proof of Lemma~\ref{lem:stars}. Let $dev:X\to\S^n$ be a
developing map of $X$ and let $x_1,x_2\in st(v)$ be such that
$x_1\neq x_2$. Fix $i=1,2$. There is an $n$-cell $P_i$ of $X$ such
that $x_i\in P_i$ and $v$ is a vertex of $P_i$. Because $n$-cells
are polyballs, to show injectivity of $dev$ we may assume that
$x_1\in P_1\setminus P_2$ and $x_2\in P_2\setminus P_1$. Now,
consider the geodesic segment $[(v)_\S,(x_i)_\S]$ in $\S^n$. Because
$n$-cells are convex polyballs, we must have that
$[(v)_\S,(x_i)_\S]\subset (P_i)_\S$. Furthermore, the length of
$[(v)_\S,(x_i)_\S]$ is less than $\pi$, since an $n$-cell is
contained in an open halfspace of $\S^n$. As in the proof of
Lemma~\ref{lem:stars}, the initial directions at $(v)_\S$ of the two
geodesic segments must be different. Because their lengths are less
than $\pi$, however, this implies that they intersect only at
$(v)_\S$, hence $(x_1)_\S\neq (x_2)_\S$. Thus $dev$ is injective
when restricted to $st(v)$. Furthermore, because $X$ is a manifold,
the image $st(v)_\S$ has to be a topological ball. This completes
the proof of the claim. Notice that the vertex $v$ is an interior
(resp. boundary) point of the $n$-polyball $st(v)$, if it is an
interior (resp. boundary) point of $X$.

We now begin the proof of (3)$\Rightarrow$(1). Assume the condition
(3). Because $X$ is without boundary, each vertex $v$ of $X$ is an
interior point of the $n$-polyball $st(v)$. Let $e$ be a ridge of
$X$ in the boundary of $st(v)$. Then $e$ does not contain $v$. We
claim that the $res(e;st(v))$ is either a single $n$-cell or a union
of two adjacent $n$-cells. Indeed, if there is no facet of $X$
containing both $v$ and $e$, then $e$ intersects only a single
$n$-cell in $st(v)$, which is $res(e;st(v))$. If $\s$ is a facet of
$X$ containing both $v$ and $e$, then $e$ intersects two adjacent
$n$-cells in $st(v)$, whose union is $res(e;st(v))=res(\s)$. This
proves the claim. In both cases, the condition (3) implies that the
$res(e;st(v))$ is a convex $n$-polyball. Therefore, the link
$\L(e;st(v))$ is convex. Since $e$ is arbitrary, it follows from
Lemma~\ref{lem:ridge} that the $n$-polyball $st(v)$ is convex.
\end{proof}

\begin{definition}[Residual convexity] \label{def:rconvexity}
An $n$-complex $X$ is said to be \emph{residually convex} if it is
without boundary and if it satisfies one of the equivalent
conditions in the previous lemma.
\end{definition}

\begin{remark}
The condition (3) in Lemma~\ref{lem:residue} is the one that we
considered in the introduction. Kapovich introduced this condition
in \cite{kapovich}. The condition (3) is seemingly the weakest among
those listed in Lemma~\ref{lem:residue}, hence the easiest to
verify. Thus we shall verify the condition (3) whenever we want to
show residual convexity of a given $n$-complex.
\end{remark}

If $X$ is residually convex and $e$ is a ridge of $X$, then the
residue $res(e)$ is a (convex) $n$-polyball by
Lemma~\ref{lem:residue} (2). A subset $F$ of the boundary of
$res(e)$ is said to be convex if $F_\S$ is a convex subset of
$\S^n$.

\begin{definition}[Good ridges] \label{def:good}
A ridge $e$ of a residually convex $n$-complex $X$ is said to be
\emph{good} if its residue $res(e)$ in $X$ has the following
property:
\begin{quote}
for every convex subcomplex $F$ in the boundary of $res(e)$ that
does not intersect $e$, the intersection $st(F)\cap res(e)$ is a
convex $n$-polyball.
\end{quote}
A ridge is \emph{bad} if it is not good.
\end{definition}

\begin{example}
See Figure~\ref{fig:strong} in the introduction. In this figure, a
ridge $e$ and its residue $res(e)$ are specified. The residue
$res(e)$ has five maximal convex subcomplexes $F$ in its boundary,
for each of which the intersection $st(F)\cap res(e)$ is shaded. The
picture marked with (*) shows that the intersection $st(F)\cap
res(e)$ is not convex for some $F$. Therefore, the ridge $e$ is bad.
Some more examples of good and bad ridges can be seen in
Figure~\ref{fig:wallpaper} below.
\end{example}

\begin{definition}[Strong residual convexity] \label{def:strong}
An $n$-complex $X$ is said to be \emph{strongly residually convex}
if it is residually convex and all ridges of $X$ are good.
\end{definition}

We shall discuss this property later after the main theorem (see
Remark~\ref{rem:strong}). The proof of the following lemma is the
only place where strong residual convexity is used explicitly, and
is illustrated by Figure~\ref{fig:strong} (with $st^k(P_0)$ playing
the role of $B$).

\begin{lemma} \label{lem:main}
Let $X$ be a strongly residually convex $n$-complex. Let $B$ be a
subcomplex of $X$ which is a convex $n$-polyball. If the star
$st(B)$ is an $n$-polyball then it is a convex $n$-polyball.
\end{lemma}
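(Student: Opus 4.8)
The plan is to use Lemma~\ref{lem:ridge}: since $st(B)$ is already assumed to be an $n$-polyball, it suffices to show that its links at ridges in its boundary are convex. So let $e$ be a ridge of $X$ contained in the boundary of the $n$-polyball $st(B)$. Since links of ridges in an $n$-polyball depend only on the $n$-cells containing that ridge, I would replace the ambient object by the local model $U(e) := st(B)\cap res(e)$, the union of those $n$-cells of $X$ that contain $e$ and meet $B$, and show $U(e)$ is a convex $n$-polyball; then $\L(e;st(B)) = \L(e;U(e))$ is convex, and Lemma~\ref{lem:ridge} finishes the proof.

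First I would set $F := B\cap res(e)$. By Lemma~\ref{lem:residue}~(2), $res(e)$ is a convex $n$-polyball, and $B$ is convex by hypothesis, so $F$ is an intersection of two convex polyballs, hence a convex subcomplex. The key geometric point is that $F$ lies in the boundary of $res(e)$ and does not meet $e$: because $e$ is in the \emph{boundary} of $st(B)$ while $res(e)$ consists of all $n$-cells through $e$, if $e$ met the interior of $B$ then $res(e)\subset st(B)$ and $st(B)$ would contain a full neighborhood of an interior point of $e$, contradicting $e\subset\partial(st(B))$; so $e$ is disjoint from $B$, i.e. $F\cap e=\emptyset$. Likewise $F$ is contained in $\partial(res(e))$ since $res(e)\setminus\partial(res(e))$ is a neighborhood of interior points of $e$, which $B$ avoids. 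Next I would identify $U(e)$ with $st(F)\cap res(e)$: an $n$-cell $P$ of $res(e)$ lies in $st(B)$ precisely when $P$ meets $B$, and since $P$ already contains $e$ which is disjoint from $B$, any point of $P\cap B$ lies in $P\cap res(e)\cap B = P\cap F$; thus $P$ meets $B$ iff $P$ meets $F$, giving $U(e) = st(F;res(e)) = st(F)\cap res(e)$. Now strong residual convexity says every ridge $e$ is good, so by Definition~\ref{def:good} the set $st(F)\cap res(e)$ is a convex $n$-polyball. Hence $U(e)$ is a convex $n$-polyball, $\L(e;st(B))$ is convex, and since $e$ was an arbitrary ridge in $\partial(st(B))$, Lemma~\ref{lem:ridge} gives that $st(B)$ is convex.

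The main obstacle, and the step deserving the most care, is the bookkeeping in the previous paragraph: verifying rigorously that $F = B\cap res(e)$ satisfies the exact hypotheses in the definition of a good ridge — namely that it is a convex \emph{subcomplex} in the boundary of $res(e)$ \emph{disjoint from} $e$ — and that $st(F)\cap res(e)$ genuinely coincides with the union of $n$-cells of $st(B)$ through $e$, i.e. with $\L(e;st(B))$'s local model. Both facts hinge on the combinatorial interplay between ``star'' (cells intersecting) and ``residue'' (cells containing), together with the manifold structure of $X$ forcing $e$ into the topological boundary of $st(B)$ exactly when $e$ avoids $B$. Once this identification is pinned down, the conclusion is immediate from goodness plus Lemma~\ref{lem:ridge}; everything else is routine.
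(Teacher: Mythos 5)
Your argument is correct and follows exactly the paper's strategy: reduce via Lemma~\ref{lem:ridge} to convexity of $\L(e;st(B))$ at ridges $e$ in $\partial\,st(B)$, take $F=res(e)\cap B$, verify the hypotheses of Definition~\ref{def:good} (convex subcomplex of $\partial\,res(e)$ disjoint from $e$), invoke goodness of $e$ via strong residual convexity, and identify $st(F)\cap res(e)$ with $res(e;st(B))$. One small slip worth noting: to conclude $e\cap B=\emptyset$ you should run the contradiction under the hypothesis ``$e$ meets $B$'' rather than ``$e$ meets the interior of $B$'' (meeting $B$ anywhere already forces $res(e)\subset st(B)$, and the weaker hypothesis only rules out $e\cap B^\circ$); the paper simply asserts this disjointness, and your justification is a welcome addition once that wording is corrected.
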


\begin{proof}
Let $e$ be a ridge in the boundary of $st(B)$. In view of
Lemma~\ref{lem:ridge} it suffices to show that the link
$\L(e;st(B))$ is convex, because the star $st(B)$ is assumed to be
an $n$-polyball. To see this, consider the residue $res(e)$ of $e$
in $X$, which is a convex $n$-polyball by residual convexity of $X$.
The subcomplex $B$ is also a convex $n$-polyball by assumption.
Because $e$ does not intersect $B$, the two $n$-polyballs $res(e)$
and $B$ intersect along their boundaries. Therefore, the
intersection $res(e)\cap B$ is a convex subcomplex in the boundary
of $res(e)$ that does not intersect $e$. From the strong residual
convexity of $X$ it follows that $st[res(e)\cap B]\cap res(e)$ is a
convex $n$-polyball.

We now claim that
$$
st[res(e)\cap B]\cap res(e)=res(e)\cap st(B).
$$
First, we have that
$$
st[res(e)\cap B]\cap res(e)\subset st[res(e)]\cap st(B)\cap
res(e)=res(e)\cap st(B),
$$
where the inclusion follows from \eqref{st2}. To show the reverse
inclusion, let $f$ be a cell in $res(e)\cap st(B)$. Then $f$ is in
$res(e)$ and intersects $B$. Thus $f\cap[res(e)\cap B]=f\cap B$ is
non-empty, and hence $f\subset st[res(e)\cap B]$. This proves the
claim.

As a result of the claim, we have that $res(e;st(B))=res(e)\cap
st(B)$ is a convex $n$-polyball. Therefore, the link $\L(e;st(B))$
is convex as desired.
\end{proof}

We are now ready to prove the main theorem of this paper.

\begin{theorem} \label{thm:main}
Let $X$ be an $n$-complex. If $X$ is strongly residually convex,
then $X$ is isometric to a convex proper domain in $\S^n$. In
particular, $X$ is contractible.
\end{theorem}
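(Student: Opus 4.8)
The plan is to fix an $n$-cell $P_0$ of $X$ and prove by induction on $k\ge 0$ that each iterated star $st^k(P_0)$ is a convex $n$-polyball whose developing image $st^k(P_0)_\S$ is a proper subset of $\S^n$ (i.e.\ is not all of $\S^n$). Since $\{\pi_i(P_i)\}$ is locally finite and $X$ is connected, the stars exhaust $X$: every cell of $X$ lies in $st^k(P_0)$ for $k$ large enough. Once the induction is established, $dev$ is injective on each $st^k(P_0)$, and because the $st^k(P_0)$ form a nested exhaustion of $X$ the maps glue to show $dev:X\to\S^n$ is a global isometric embedding onto $\bigcup_k st^k(P_0)_\S$, which is an increasing union of convex proper subsets of $\S^n$, hence convex; and since each term avoids some point, so does the union provided we are slightly careful (see below), so the image is a convex proper domain. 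Contractibility is then immediate since convex proper subsets of $\S^n$ are geodesically convex balls.

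The base case $k=0$ is trivial: $st^0(P_0)=P_0$ is a single $n$-cell, hence a convex $n$-polyball contained in an open halfspace of $\S^n$, so its image is proper. For the inductive step, assume $st^k(P_0)$ is a convex $n$-polyball not covering $\S^n$. By \eqref{**}, $st^{k+1}(P_0)=\bigcup_{P\in\mathcal{P}}st^k(P)$ where $\mathcal{P}$ is the set of $n$-cells meeting $P_0$; applying the inductive hypothesis and Lemma~\ref{lem:stars}~(2) (whose hypothesis is exactly that $st^k(P)$ is a convex $n$-polyball for every $n$-cell $P$, which follows from strong residual convexity of $X$ applied inductively, or directly from the hypothesis of the theorem once we observe it holds for all $n$-cells, not just those in $\mathcal{P}$) gives that $st^{k+1}(P_0)$ is an $n$-polyball. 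Now set $B:=st^k(P_0)$; it is a convex $n$-polyball and $st(B)=st^{k+1}(P_0)$ is an $n$-polyball, so Lemma~\ref{lem:main} — this is where strong residual convexity enters — yields that $st^{k+1}(P_0)$ is a \emph{convex} $n$-polyball.

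The remaining point, and the one I expect to be the genuine obstacle, is showing that $st^{k+1}(P_0)_\S$ is still a \emph{proper} subset of $\S^n$, and more delicately that the \emph{limit} $\bigcup_k st^k(P_0)_\S$ is proper. Convex proper subsets of $\S^n$ have diameter at most $\pi$ and are contained in a closed halfspace; the worry is that an increasing union of such could fill up an open hemisphere and then, in the limit, all of $\S^n$. To rule this out I would argue as follows: a convex $n$-polyball $C\subsetneq\S^n$ whose image is not a single point has a nonempty boundary facet $\s$, and its supporting hyperplane $\langle\s\rangle$ supports $C_\S$; by residual convexity the residue $res(\s)$ of that boundary facet $\s$ in $X$ is a convex $n$-polyball, and $res(\s)\cap st^{k+1}(P_0)$ sits in one halfspace of $\langle\s\rangle$ while $st^{k+2}(P_0)$ only adds $n$-cells on the far side via that single facet — the point being that passing from $st^k$ to $st^{k+1}$ does not let the image cross a supporting hyperplane of the whole union. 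Concretely, one tracks a fixed supporting halfspace $H^+$ of $st^0(P_0)_\S$ and shows inductively, using the convexity just proved together with Lemma~\ref{lem:ridge}-type local control at boundary ridges, that $st^k(P_0)_\S$ always avoids the antipode of a fixed interior point of $P_0$; this single avoided point persists into the union. I would organize this as a short separate lemma (``the developing image of a convex $n$-complex with boundary omits a point of $\S^n$'') and invoke it; making that lemma's proof airtight — handling the degenerate case where $st^k(P_0)_\S$ grows to a full open hemisphere whose closure is all of $\S^n$ — is the step that needs the most care, and is presumably why the theorem statement explicitly records ``proper domain'' as part of the conclusion rather than treating it as obvious.
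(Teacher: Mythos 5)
Your main line of argument matches the paper's proof of Theorem~\ref{thm:main}: fix an $n$-cell, use Lemma~\ref{lem:residue}~(1), Lemma~\ref{lem:stars}, and Lemma~\ref{lem:main} to run an induction showing each iterated star is a convex $n$-polyball, then take the nested union. One structural point you flag parenthetically but should integrate cleanly: since Lemma~\ref{lem:stars}~(2) needs $st^k(P)$ convex for \emph{every} $n$-cell $P$ (not only for those in $st(P_0)$), the paper runs the induction over all $n$-cells $P$ simultaneously and only at the end specializes to the exhaustion by $st^k(P_0)$. Your statement of the induction hypothesis should be phrased that way rather than for a single $P_0$.

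The real gap is in your treatment of the ``proper domain'' conclusion, where you are both more worried and less precise than necessary. First, there is no issue with each individual $st^{k+1}(P_0)_\S$ being proper: it is the image of a polyball under the injective developing map, hence a topological $n$-ball, hence $\ne\S^n$ with no further argument. Second, your mechanism for the union is off. You propose to ``track a fixed supporting halfspace $H^+$ of $st^0(P_0)_\S$'' and argue that passing from $st^k$ to $st^{k+1}$ ``does not let the image cross a supporting hyperplane of the whole union.'' But the stars absolutely do cross the supporting hyperplanes of the \emph{previous} stars as $k$ grows -- that is how they exhaust $X$ -- and before properness is established one does not yet know the union has any supporting hyperplane at all; moreover ``Lemma~\ref{lem:ridge}-type local control at boundary ridges'' gives local convexity information, not global information about omitted points. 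The correct statement you reach (``$st^k(P_0)_\S$ always avoids the antipode of a fixed interior point of $P_0$'') is exactly what the paper uses, but it is a direct, non-inductive observation: for each $k$ separately, $st^k(P_0)_\S$ is convex and $\ne\S^n$, hence contained in a closed halfspace $H_k^+$ (depending on $k$); since $(P_0)_\S$ lies in the topological interior of $st^k(P_0)_\S$, any fixed interior point $q$ of $(P_0)_\S$ lies in the \emph{open} hemisphere $\mathrm{int}(H_k^+)$, so $-q$ lies in $\mathrm{int}(H_k^-)$, which is disjoint from $H_k^+\supset st^k(P_0)_\S$. Thus $-q$ is omitted by every $st^k(P_0)_\S$, and hence by the union. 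Replacing your ``fixed halfspace plus induction via ridges'' by this per-$k$ halfspace argument closes the gap and is what the paper (tersely) does.
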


\begin{proof} By Lemma~\ref{lem:residue} (1), the star $st(v)$ is a convex
$n$-polyball for all vertices $v$ in $X$. Lemma~\ref{lem:stars} (1)
then says that the star $st(P)$ is an $n$-polyball for every
$n$-cell $P$ in $X$. By Lemma~\ref{lem:main}, it is a convex
$n$-polyball.

We next claim that $st^k(P)$ is a convex $n$-polyball for all
$k\ge1$ and for every $n$-cell $P$ in $X$. The proof goes by
induction on $k$. We just showed above that the base case $k=1$
holds true. Suppose that the claim is true for $k$, that is,
$st^k(P)$ is a convex $n$-polyball for every $n$-cell $P$ in $X$.
Then it follows from Lemma~\ref{lem:stars} (2) and
Lemma~\ref{lem:main} that $st^{k+1}(P)$ is a convex $n$-polyball for
each $n$-cell $P$ in $X$. The induction is complete.

Now it is easy to see that $dev:X\to\S^n$ is an embedding and $X_\S$
is a convex proper domain of $\S^n$. Consider the iterated stars
$st^k(P_0)$ of a fixed $n$-cell $P_0$ of $X$. Then for any two
distinct points $x_1\neq x_2$ of $X$, there is an integer $K\ge0$
such that $x_1,x_2\in st^K(P_0)$. Because $st^K(P_0)$ is a polyball,
we have $(x_1)_\S\neq(x_2)_\S$. Thus $dev:X\to\S^n$ is injective.
Moreover, because $st^K(P_0)$ is a convex polyball, the geodesic
segment $[(x_1)_\S,(x_2)_\S]$ is in $st^K(P_0)_\S\subset X_\S$.
Therefore, $X_\S$ is a convex subset of $\S^n$. Furthermore, because
all the images $st^k(P_0)$ are disjoint from the antipodal set
$-(P_0)_\S$, $X_\S$ is a proper subset of $\S^n$. Finally, because
$X$ is a connected $n$-manifold without boundary, the image $X_\S$
must be a connected open subset of $\S^n$. The proof is complete.
\end{proof}

\begin{remark} \label{rem:strong}
As its name suggests, strong residual convexity is indeed a very
strong local requirement for a few reasons;

(1) Essentially, we proved convexity of a subset $C\subset\S^n$ by
showing that $C$ is exhausted by a nested sequence of convex subsets
$U_k$ of $\S^n$. But, given a nested sequence of subsets $U_k$ which
exhausts $C$, the following weaker property would suffice to
guarantee convexity of $C$: for each $k$ there is $K>k$ such that
$$
\textup{conv}(U_k)\subset U_K.
$$
However, it seems hard to find \textsl{local} conditions which imply
this property.

(2) Moreover, a convex domain may admit residually convex
tessellations which are not strongly residually convex.
Figure~\ref{fig:wallpaper} shows examples of such tessellations of
the plane.
\begin{figure}[htbp]
\includegraphics[width=1\linewidth]{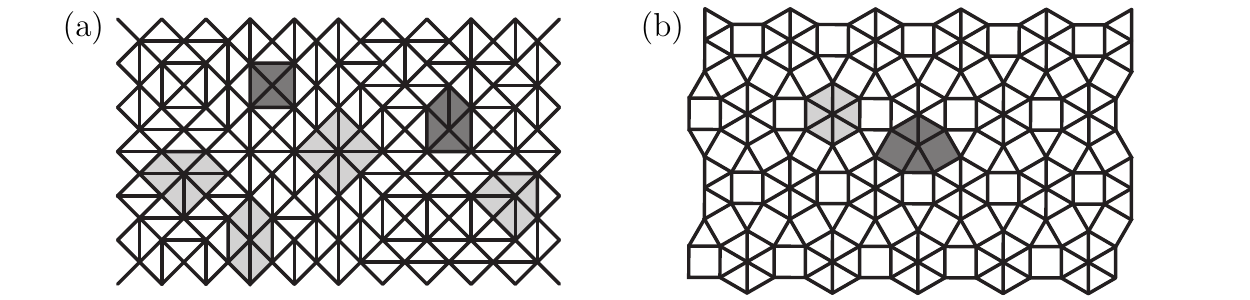}
\caption{\footnotesize Residually convex tessellations of the plane.
Residues of good ridges are shaded light. Residues of bad ridges are
shaded dark. (a) A random tessellation by right isosceles triangles.
Vertices of valency less than $6$ are bad ridges. (b) Vertices of
squares are bad ridges.} \label{fig:wallpaper}
\end{figure}
One may observe that triangles contribute to such phenomena; this is
the subject of the next subsection. Bounded convex domains may also
admit such tessellations. For example, consider the tessellations of
the Klein (projective) model of the hyperbolic plane corresponding
to the triangle reflection groups $G(a,b,c)$ where $a=2$. In such
tessellations, all $4$-valent vertices are bad ridges.
\end{remark}

Later, we shall need the following fact that residual convexity is
inherited by links.

\begin{lemma} \label{lem:link is residually convex}
Let $X$ be an $n$-complex and $e$ an $m$-cell of $X$ with $m<n$. If
$X$ is residually convex then the link $\L(e;X)$ is residually
convex.
\end{lemma}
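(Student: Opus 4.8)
The plan is to verify condition (1) of Lemma~\ref{lem:residue} for the link $L=\L(e;X)$: every vertex star $st(w;L)$ is a convex $(n-m-1)$-polyball. The natural strategy is to relate cells and residues in the link $L$ back to cells and residues in $X$ via the correspondence \eqref{link-cell}, and then transport the convexity hypothesis on $X$ through that correspondence. Concretely, a vertex $w$ of $L$ comes from an $(m+1)$-cell $f$ of $X$ with $e\subset f$ and $w=f_{(e;X)}$; more generally each $k$-cell of $L$ containing $w$ comes from an $(m+k+1)$-cell of $X$ containing $f$. Since $X$ is residually convex, I may use whichever of the equivalent conditions of Lemma~\ref{lem:residue} is most convenient; I expect condition (2) or (3) applied to cells of $X$ of the appropriate dimension to be the right input.

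The key steps, in order, are: (i) fix a vertex $w$ of $L$, write $w=f_{(e;X)}$ for the corresponding $(m+1)$-cell $f\supset e$ of $X$, and observe via \eqref{link-cell} that the $n$-cells of $X$ containing $f$ are in bijection with the top-dimensional cells of $L$ containing $w$, i.e.\ with the top-dimensional cells of $st(w;L)=res(w;L)$; (ii) show that this bijection is compatible with the gluing, so that $st(w;L)$ is isometric to the link $\L(f;res(f;X))=\L(f;st(f;X))$ — here I would invoke the iterated-link identity \eqref{isom'} (equivalently \eqref{links2} lifted to complexes) together with the fact, noted in the ``Stars and residues'' subsection, that $\L(e;B)=\L(e;res(e;B))$ depends only on the cells containing $e$; (iii) since $X$ is residually convex, $res(f;X)$ is a convex $n$-polyball (Lemma~\ref{lem:residue}~(2) if $1\le m+1\le n-2$, or \eqref{***} and the other cases at the extremes), and the link of a cell in the boundary of a convex polyball is a convex polyball (this is exactly the local-convexity discussion preceding Lemma~\ref{lem:ridge}, since $f$ lies in the boundary of its own residue); (iv) conclude that $st(w;L)$ is a convex $(n-m-1)$-polyball for every vertex $w$ of $L$, hence $L$ satisfies condition (1) of Lemma~\ref{lem:residue}. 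One should also check $L$ is without boundary: this holds because $e\subset X$ and $X$ has empty boundary — wait, $X$ need not be without boundary here, so instead note that $L$ being a link of a cell not in $\partial X$ is a sphere, and in general residual convexity as used downstream is applied to links of cells of a boundaryless complex; I would state the lemma's conclusion for the case at hand, namely that $L$ satisfies the local conditions, and remark that when $X$ has empty boundary so does $\L(e;X)$ for interior cells $e$.

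The main obstacle I anticipate is bookkeeping rather than conceptual: carefully matching the combinatorics of ``cells of $\L(e;X)$ containing $f_{(e;X)}$'' with ``cells of $X$ containing $f$'' and checking that the induced identification of metrics/developing data makes $st(f_{(e;X)};\L(e;X))$ literally isometric to $\L(f;st(f;X))$. The identity \eqref{isom'} does most of the work on a single polytope, and its complex-level analogue was already established in the ``Links in complexes'' subsection; the remaining care is that residual convexity of $X$ gives convexity of $res(f;X)$ for \emph{all} cells $f$ of the relevant dimensions (using \eqref{***} to pass between dimensions), so that the needed input is available for every vertex $w$ of $L$. Once that matching is in place, convexity of $st(w;L)$ is immediate from the polyball local-convexity discussion, and the lemma follows.
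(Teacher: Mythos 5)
Your overall strategy — verifying condition (1) of Lemma~\ref{lem:residue} for $L=\L(e;X)$ by matching cells of $L$ that contain a vertex $w=f_{(e;X)}$ with cells of $X$ that contain $f$, then transporting residual convexity of $X$ — is sound and parallels the paper's own proof (which instead verifies condition (3), facet residues). However, step (ii) contains a genuine error: you assert that $st(w;L)$ is isometric to $\L(f;res(f;X))$, but this is dimensionally impossible. The star $st(w;L)$ is $(n-m-1)$-dimensional, whereas $\L(f;res(f;X))$, the link of an $(m+1)$-cell in an $n$-complex, is $(n-m-2)$-dimensional. What the iterated-link identity \eqref{isom'} actually gives is $\L(w;L)=\L(f;X)=\L(f;res(f;X))$ — the \emph{link} of $w$ in $L$, not the star; you have conflated the two. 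Worse, your justification ``since $f$ lies in the boundary of its own residue'' is false: any interior point of the cell $f$ has \emph{all} of its neighboring $n$-cells inside $res(f;X)$, so $f$ is interior to $res(f;X)$ and $\L(f;res(f;X))\cong\S^{n-m-2}$ is a sphere, for which ``being a convex polyball'' is not the right thing to assert.

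The fix is to take the link of $e$ rather than of $f$. The bijection you set up in (i) gives directly that $st(w;L)=res(w;L)=\L(e;res(f;X))$, since the cells of $L$ containing $w=f_{(e;X)}$ are exactly the $g_{(e;X)}$ with $g\supset f$ in $X$. Here $e$ genuinely lies in the boundary of $res(f;X)$ (some $n$-cell of $X$ contains $e$ but not $f$, because $\L(e;X)$ has more than one top cell), so your step (iii) now applies correctly: $res(f;X)$ is a convex $n$-polyball by residual convexity of $X$ (condition (2) for $1\le m+1\le n-2$, condition (3) when $f$ is a facet), and the link of a boundary cell of a convex polyball is a convex polyball (the discussion preceding Lemma~\ref{lem:ridge}). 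This establishes condition (1) of Lemma~\ref{lem:residue} for $L$. The paper does essentially the same thing one step up in codimension, showing $res(\s_{(e;X)};L)=\L(e;res(\s;X))$ for facets $\s\supset e$ of $X$ and invoking condition (3) directly, which avoids the dimension case-split. Finally, your waffling about $\partial L$ is unnecessary: residual convexity (Definition~\ref{def:rconvexity}) by definition requires $X$ to be without boundary, which already forces $L\cong\S^{n-m-1}$.
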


\begin{proof}
If $X$ is residually convex then $X$ is without boundary and the
link $\L(e;X)$ is isometric to the sphere $\S^{n-m-1}$ (hence
without boundary). Note first that every cell of the link $\L(e;X)$
is of the form $f_{(e;X)}$ for some cell $f$ of $X$. See
\eqref{link-cell}. To check condition (3) in
Lemma~\ref{lem:residue}, let $\s_{(e;X)}$ be a facet of $\L(e;X)$
where $\s$ is a facet of $X$ containing $e$. Because an $n$-cell $P$
of $X$ contains $\s$ if and only if the corresponding $(n-m-1)$-cell
$P_{(e;X)}$ of $\L(e;X)$ contains $\s_{(e;X)}$, we see that the
residue of $\s_{(e;X)}$ in $\L(e;X)$ is equal to the link of $e$ in
$res(\s;X)$, that is,
$$
res(\s_{(e;X)};\L(e;X))=\L(e;res(\s;X)).
$$
Because $X$ is residually convex, however, the residue $res(\s;X)$
is a convex $n$-polyball and hence the link $\L(e;res(\s;X))$ is
also a convex $(n-m-1)$-polyball. The proof is complete.
\end{proof}

\subsection{Complexes without triangular polytopes}

We shall provide a simple combinatorial condition under which a
given residually convex $n$-complex $X$ becomes strongly residually
convex. In the following definition we regard a single polytope as a
complex and its boundary as a subcomplex.

\begin{definition}[Triangular polytopes] \label{def:triangular}
A polytope $P$ is said to be \emph{triangular} if it has a ridge $e$
and a face $f$ such that $res(e;\partial P)\cap f$ is disconnected.
Such a pair $(e,f)$ is called a \emph{triangularity pair} for $P$.
\end{definition}

Of course, triangles are the only triangular $2$-polytopes. More
discussion on (non-)triangular polytopes will be given after the
proof of the following theorem.

\begin{theorem} \label{thm:notriangle}
Let $X$ be a residually convex $n$-complex. If none of the $n$-cells
of $X$ is triangular, then $X$ is strongly residually convex.
\end{theorem}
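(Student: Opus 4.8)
The plan is to verify that every ridge of $X$ is good, using the hypothesis that no $n$-cell is triangular. Fix a ridge $e$ of $X$ and write its residue as $res(e)=P_1\cup\cdots\cup P_{d_e}$ as in \eqref{cyclic}, so that consecutive $P_i,P_{i+1}$ share the facet $\s_i$ with $\s_i\cap\s_j=e$ for $i\ne j$; since $X$ has no boundary, $\L(e;X)=\S^1$ and the $P_i$ are arranged cyclically. By residual convexity $res(e)$ is a convex $n$-polyball. Now let $F$ be a convex subcomplex in the boundary of $res(e)$ that does not intersect $e$. I must show $st(F)\cap res(e)=\bigcup\{P_i : P_i\cap F\ne\emptyset\}$ is a convex $n$-polyball. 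The first step is to understand which $P_i$ meet $F$: since $F$ lies in $\partial\,res(e)$ and avoids $e$, the trace of $F$ on each $P_i$ lives in the union of the two facets of $P_i$ other than $\s_{i-1},\s_i$ together with the part of $\partial P_i$ away from $e$; more usefully, I would pass to the link $\L(e;X)=\S^1$ where $res(e)$ becomes an arc of $n$ vertices (one per $P_i$) and $F$ projects to some closed subset $F_{(e;X)}$ of this circle, and the combinatorial content reduces to: the set of $P_i$ meeting $F$ corresponds to a sub-arc that is "convex" in the appropriate sense.

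The crux is to show this index set is an interval $\{i : a\le i\le b\}$ (cyclically), i.e.\ that the $P_i$ meeting $F$ are consecutive. This is exactly where non-triangularity enters. Suppose $P_i$ and $P_j$ meet $F$ but some intermediate $P_k$ (with $i<k<j$) does not. Since $res(e)$ is convex and $F$ is convex, and $F\cap P_i,\,F\cap P_j$ are both nonempty, I would pick points of $F$ in $P_i$ and in $P_j$, join them by a geodesic inside the convex set $res(e)_\S$; this geodesic lies in $F_\S$ by convexity of $F$, hence passes through $P_k$, so $F\cap P_k\ne\emptyset$ after all — \emph{unless} the geodesic can escape through the interior of a facet $\s_\ell$ without touching $P_\ell$ on both sides, which cannot happen, or unless $F$ itself is disconnected across $P_k$. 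The real danger is that $F$ meets the boundary of $res(e)$ in a way that "jumps over" $P_k$ by running along a single cell: this is precisely the configuration $res(e;\partial P_k)\cap f$ being disconnected for some face $f$ of $P_k$, i.e.\ $P_k$ being triangular. So I would argue the contrapositive: if some $P_i$ meeting $F$, a non-adjacent $P_j$ meeting $F$, and an omitted intermediate $P_k$ exist, then one extracts from this a triangularity pair $(e',f)$ for $P_k$ — taking $e'$ to be the ridge $e$ viewed inside $\partial P_k$ and $f$ an appropriate face of $P_k$ carrying the relevant part of $F$ — contradicting the hypothesis. I expect this extraction, matching the abstract combinatorics of $F$ in $res(e)$ with Definition~\ref{def:triangular}, to be the main obstacle; it should be carried out in the link $\L(e;X)$ and, via the identity \eqref{links2}/\eqref{isom'}, in the links of vertices, so that the same non-triangularity hypothesis is available by induction on dimension.

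Once the index set $\{i : P_i\cap F\ne\emptyset\}$ is known to be a (cyclic) interval $P_a\cup P_{a+1}\cup\cdots\cup P_b$, its convexity follows: it is a sub-union of the convex $n$-polyball $res(e)$ consisting of consecutive cells, so the developing map restricted to it is injective with topological-ball image, and local convexity needs to be checked only at the two "end" facets $\s_{a-1}$ and $\s_b$ that are now exposed — but those are facets, whose links are singletons in $\S^0$ and hence convex, so by Lemma~\ref{lem:ridge} the union is convex. One subtlety: I must make sure the interval is a \emph{proper} sub-arc of the cycle (i.e.\ does not wrap all the way around), which holds because $F$ avoids $e$ and a convex subcomplex of $\partial\,res(e)$ meeting every $P_i$ would force $F$ to surround $e$; if $F$ happened to equal the whole of $\partial\,res(e)$ minus a neighborhood of $e$ this still gives an arc, and the argument goes through. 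Finally, since $e$ was arbitrary, all ridges of $X$ are good, so $X$ is strongly residually convex by Definition~\ref{def:strong}, which is what we wanted.
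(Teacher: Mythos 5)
Your final step is wrong, and it is a genuine gap. You deploy non-triangularity only to show that the $n$-cells meeting $F$ are consecutive, say $P_a,\ldots,P_b$, and then claim convexity of the union is automatic, with ``local convexity [needs] to be checked only at the two `end' facets $\s_{a-1}$ and $\s_b$.'' But Lemma~\ref{lem:ridge} requires checking links of \emph{ridges} in the boundary, not facets, and the dangerous one is $e$ itself: once $P_a\cup\cdots\cup P_b$ is a proper subset of $res(e)$, the ridge $e$ moves from the interior of $res(e)$ to the boundary of the sub-union, and its link there is a proper arc of $\S^1$ whose length can exceed $\pi$. This is exactly what happens in the picture marked (*) in Figure~\ref{fig:strong}, where a consecutive union of three cells of $res(e)$ fails to be convex. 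So consecutiveness alone cannot give convexity; you have proved nothing about the size $d$ of the interval.

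The paper uses non-triangularity to prove the quantitative bound $d\le 2$, which is what actually forces convexity. (Consecutiveness itself follows just from connectedness of $F$ together with $f_i\cap f_{i+1}\subset\s_i$; non-triangularity is not needed for that step, contrary to your setup.) If $d\ge 3$, the paper looks at the \emph{middle} cell $P_2$ and the single cell $f_2=F\cap P_2$, and computes $res(e;\partial P_2)\cap f_2=(\s_1\cup\s_2)\cap f_2=(f_1\cap f_2)\cup(f_2\cap f_3)$, where the two pieces are nonempty (by connectedness of $F$) and disjoint (since $\s_1\cap\s_2=e$ and $F$ avoids $e$); this exhibits $(e,f_2)$ as a triangularity pair for $P_2$, a contradiction. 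Once $d\le 2$, the set $st(F)\cap res(e)$ is either a single $n$-cell $P_1$ or the residue $res(\s_1)=P_1\cup P_2$, both of which are convex by residual convexity, and no further link-checking is required. Note also that your ``skip'' scenario aims to extract a triangularity pair for a cell $P_k$ that $F$ does \emph{not} meet; Definition~\ref{def:triangular} requires a face $f$ of the polytope in question witnessing disconnectedness, and the natural candidate $f=F\cap P_k$ is empty in your scenario. The paper's argument supplies that face as $f_2\subset P_2$, a cell that $F$ \emph{does} meet and that is flanked by two others $F$ also meets.
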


\begin{proof}
Let $e$ be a ridge of $X$ and let $F$ be a convex subcomplex in the
boundary of $res(e)$ that does not intersect $e$. We shall show
below that $F$ intersects either a single $n$-cell in $res(e)$ or
two adjacent $n$-cells in $res(e)$ that share a common facet. It
then follows that $st(F)\cap res(e)$ is a single $n$-cell or the
residue of a facet. Because $X$ is residually convex,
Lemma~\ref{lem:residue} (3) implies that $st(F)\cap res(e)$ is a
convex $n$-polyball in either case, and we conclude that $e$ is a
good ridge. Since $e$ is arbitrary, it then follows that $X$ is
strongly residually convex.

As we observed in \eqref{cyclic}, we may set
$$
res(e)=P_1\cup P_2\cup\cdots\cup P_{d_e}
$$
so that $P_i$ and $P_{i+1}$ are adjacent and share a common facet
$\s_i=P_i\cap P_{i+1}$, where the indices are taken modulo $d_e$.
Moreover, we have $\s_i\cap\s_j=e$ for $i\neq j$. Because $F$ is a
convex subcomplex in the boundary of $res(e)$ and $F$ does not
intersect $e$, after cyclically permutating the indices of $P_i$, we
may further assume that $F$ decomposes into
$$
F=f_1\cup f_2\cup\cdots\cup f_d
$$
for some $d<d_e$, where we define $f_i=F\cap P_i\neq\emptyset$. See
Figure~\ref{fig:notriangle} (a).
\begin{figure}[htbp]
\includegraphics[width=1\linewidth]{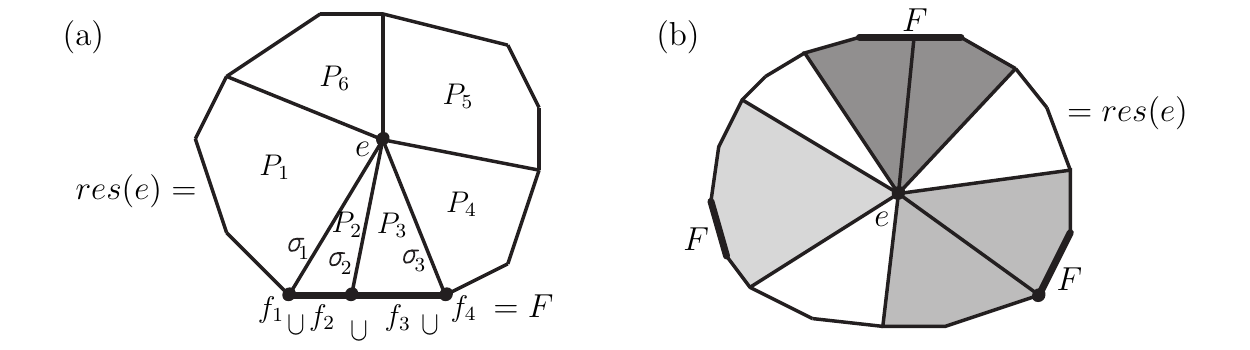}
\caption{\footnotesize (a) Proof of Theorem~\ref{thm:notriangle}. In
this picture the convex subcomplex $F$ in the boundary of $res(e)$
intersects $P_1,\ldots,P_4$, hence $d=4$. As the proof shows, the
polytope $P_2$ (as well as $P_3$) is triangular. (b) Illustration of
Theorem~\ref{thm:notriangle}. If there is no triangular polytope in
$res(e)$, then convex subcomplexes $F$ in its boundary intersect at
most two $n$-cells. It follows from residual convexity that the
ridge $e$ is good.} \label{fig:notriangle}
\end{figure}
We then observe the following:
\begin{itemize}
\item
For each $1\le i\le d$ the cell $f_i$ is convex because $F$ and
$P_i$ are convex. If the dimension of $f_i$ is $m$, then $f_i$ is a
single $m$-cell in $P_i$ because $P_i$ is a (convex) polytope;
\item
For each $1\le i\le d-1$ the intersection $f_i\cap f_{i+1}$ is a
non-empty subset of $\s_i$, because $F$ is connected and
$$
f_i\cap f_{i+1}=(F\cap P_i)\cap(F\cap P_{i+1})=F\cap(P_i\cap
P_{i+1})=F\cap\s_i\subset\s_i.
$$
\end{itemize}

Suppose now that $F$ intersects more than two $n$-cells in $res(e)$,
that is, $d\ge3$. We then have $f_1\cap f_2\subset\s_1$ and $f_2\cap
f_3\subset\s_2$. Because $\s_1\cap\s_2=e$ and $F$ does not intersect
$e$, we see that $f_1\cap f_2$ and $f_2\cap f_3$ are disjoint.
However, since $\s_1\cap f_2\subset P_1\cap F=f_1$ and $\s_2\cap
f_2\subset P_3\cap F=f_3$, we have $\s_1\cap f_2=f_1\cap f_2$ and
$\s_2\cap f_2=f_2\cap f_3$. It follows that
\begin{align*}
res(e,\partial P_2)\cap f_2&=(\s_1\cup\s_2)\cap f_2\\&=(\s_1\cap
f_2)\cup(\s_2\cap f_2)\\&=(f_1\cap f_2)\cup(f_2\cap f_3)
\end{align*}
is disconnected; a contradiction because $e\subset P_2$ is a ridge,
$f_2\subset P_2$ is a single $m$-cell, and $P_2$ is not triangular.
Therefore, we must have $d\le2$ and $F$ intersects either $P_1$ or
$P_1\cup P_2=res(\s_1)$. This completes the proof of the assertion
at the beginning.
\end{proof}

Combining the above with Theorem~\ref{thm:main} we have the
following immediate corollary:

\begin{corollary} \label{cor:notriangle}
Let $X$ be a residually convex $n$-complex. If none of the $n$-cells
of $X$ is triangular, then $X$ is isometric to a convex proper
domain in $\S^n$. In particular, $X$ is contractible.
\end{corollary}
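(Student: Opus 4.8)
The plan is to deduce this corollary immediately by chaining the two substantial results already established in this section. First I would invoke Theorem~\ref{thm:notriangle}: its hypothesis is exactly that $X$ is a residually convex $n$-complex none of whose $n$-cells is triangular, which is what we are given, and its conclusion is that $X$ is strongly residually convex. So after this one step $X$ is a strongly residually convex $n$-complex.

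Next I would feed this into Theorem~\ref{thm:main}, whose hypothesis is precisely strong residual convexity. It yields that $X$ is isometric to a convex proper domain in $\S^n$, and that $X$ is contractible. Composing these two implications is the entire argument, and there are no computations to perform.

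The only thing worth checking is that the hypotheses align without a gap: "residually convex" as used in Theorem~\ref{thm:notriangle} already builds in "without boundary" by Definition~\ref{def:rconvexity}, and this is exactly the standing assumption of Section~\ref{sec:convexity} under which Theorem~\ref{thm:main} is proved, so nothing is lost in passing from one to the other. Consequently there is no genuine obstacle at this stage; the work has been front-loaded. The codimension-$2$ combinatorics in the proof of Theorem~\ref{thm:notriangle} — analyzing the cyclic decomposition $res(e)=P_1\cup\cdots\cup P_{d_e}$ of a ridge residue and ruling out $d\ge 3$ by producing a triangularity pair $(e,f_2)$ in $P_2$ — is where the non-triangularity hypothesis is consumed, and the inductive exhaustion $X=\bigcup_k st^k(P_0)$ by convex polyballs, together with the appeal to the Globalization Theorem through Corollary~\ref{cor:locglo}, is where convexity and properness are actually obtained. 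The corollary itself is just the combination.
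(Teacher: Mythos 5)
Your proposal matches the paper's own (implicit) proof exactly: the corollary is stated immediately after Theorem~\ref{thm:notriangle} with the remark that it follows by combining that theorem with Theorem~\ref{thm:main}, which is precisely the two-step chain you describe. Your additional sanity check that the "without boundary" hypothesis is carried through Definition~\ref{def:rconvexity} is correct and harmless, though the paper does not bother to spell it out.
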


\begin{remark}
In fact, the proof of Theorem~\ref{thm:main} shows that the
conclusion of Corollary~\ref{cor:notriangle} is still valid when $X$
is allowed to have a single triangular polytope. Namely, we can take
the single triangular polytope to be the initial polytope $P_0$ in
the proof of Theorem~\ref{thm:main}.
\end{remark}

The following corollary provides us with a necessary condition for
residual convexity:

\begin{corollary}
Let $X$ be a residually convex $n$-complex and $e$ an $m$-cell of
$X$ with $m\le n-3$. Then the link $\L(e;X)$ contains a triangular
$(n-m-1)$-polytope.
\end{corollary}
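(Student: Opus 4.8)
The plan is to deduce this from Corollary~\ref{cor:notriangle} by passing to the link $\L(e;X)$ and arguing by contradiction.

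First I would record what kind of object $\L(e;X)$ is. Since $X$ is residually convex, Lemma~\ref{lem:link is residually convex} tells us that $\L(e;X)$ is a residually convex complex; because $e$ is an $m$-cell, it is an $(n-m-1)$-complex, and the hypothesis $m\le n-3$ means $n-m-1\ge 2$, so it lies in the range covered by the results of Section~\ref{sec:convexity}. Moreover, residual convexity forces $X$ to be without boundary, and then (as observed in Subsection~\ref{ssec:links}) the link $\L(e;X)$ is isometric to the round sphere $\S^{n-m-1}$. In particular $\L(e;X)$ is a compact manifold without boundary; since $n-m-1\ge 2$ it is not contractible, and being compact it cannot be isometric to a proper open subset of $\S^{n-m-1}$.

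Next I would argue by contradiction. Suppose $\L(e;X)$ contains no triangular $(n-m-1)$-polytope. Applying Corollary~\ref{cor:notriangle} to the residually convex $(n-m-1)$-complex $\L(e;X)$ (which is legitimate precisely because $n-m-1\ge 2$) yields that $\L(e;X)$ is isometric to a convex proper domain in $\S^{n-m-1}$, and in particular is contractible. This contradicts the previous paragraph. Hence $\L(e;X)$ must contain a triangular $(n-m-1)$-polytope, which is the assertion.

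I do not anticipate a genuine obstacle: the statement is essentially a formal consequence of Lemma~\ref{lem:link is residually convex} together with Corollary~\ref{cor:notriangle}. The only points worth flagging are that the dimension hypothesis $m\le n-3$ is exactly what guarantees $\L(e;X)$ has dimension $\ge 2$ — for $m=n-2$ the link would be a $1$-complex, in which $1$-polytopes have no ridges and hence (by Definition~\ref{def:triangular}) are never triangular, so the conclusion would fail there — and that one should use the fact that a convex proper domain of a sphere is a noncompact subset (equivalently, is contractible and not the whole sphere) in order to obtain the clash with $\L(e;X)\cong\S^{n-m-1}$.
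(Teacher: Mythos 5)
Your argument is correct and is essentially identical to the paper's proof: pass to the link via Lemma~\ref{lem:link is residually convex}, note it is a residually convex $(n-m-1)$-complex isometric to $\S^{n-m-1}$, and conclude by contradiction from Corollary~\ref{cor:notriangle} since spheres are not contractible. Your extra remark about the necessity of $m\le n-3$ (so that the link has dimension $\ge 2$) is a correct and useful aside, but otherwise the two proofs coincide.
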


\begin{proof}
By Lemma~\ref{lem:link is residually convex} the link $\L(e;X)$ is a
residually convex $(n-m-1)$-complex which is isometric to the sphere
$\S^{n-m-1}$. If $\L(e;X)$ contained no triangular polytope, then it
would be contractible by the previous corollary. Because spheres are
not contractible, the link $\L(e;X)$ must contain a triangular
polytope.
\end{proof}

Thus, for example, one cannot obtain a residually convex $3$-complex
by gluing together copies of octahedra only.

\begin{remark}
(1) The previous corollary suggests that it would be good if one
could catalogue all residually convex tessellations of the sphere
$\S^n$.

(2) As we observed in the introduction, a residually convex complex
may fail to be strongly residually convex if it contains triangular
polytopes. See Figure~\ref{fig:benoist} (b). See also
Remark~\ref{rem:strong} (2) and Figure~\ref{fig:wallpaper}, where we
provided some examples of residually convex tessellations of the
plane which are not strongly residually convex.

(3) It would be of independent interest to know if every (convex or
non-convex) domain can admit a residually convex tessellation. Note
that Figure~\ref{fig:domain} is just a feasible picture of a
non-convex domain admitting a residually convex tessellation. In
addition to Figure~\ref{fig:benoist} (b), we provide in
Figure~\ref{fig:benoist-var} more examples of non-convex domains
admitting a residually convex tessellation.
\begin{figure}[htbp]
\includegraphics[width=1\linewidth]{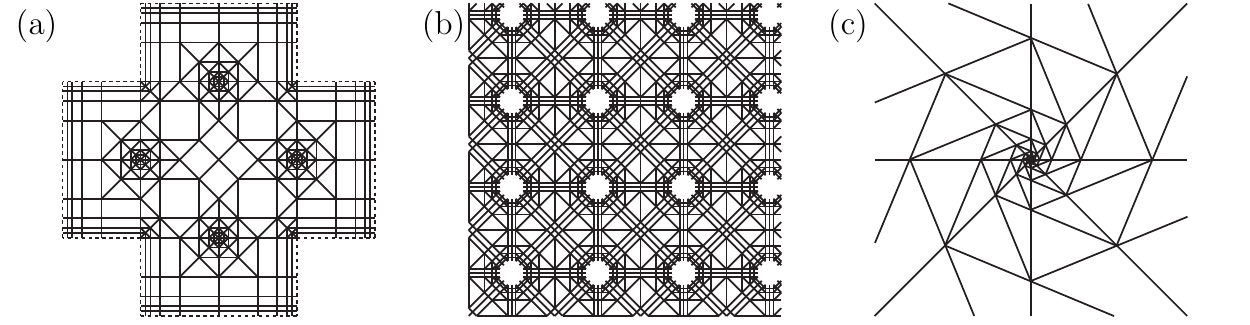}
\caption{\footnotesize Non-convex domains admitting a residually
convex tessellation. (a) A (cross-shaped) bounded domain with four
punctures. (b) The plane with a lattice of octagons removed. (c) A
variant of Benoist's example in Figure~\ref{fig:benoist} (b).}
\label{fig:benoist-var}
\end{figure}
\end{remark}

\begin{example}[Triangular polytopes] \label{exam:triangular}
(1) Triangles are the only triangular $2$-polytopes. Pyramids are
triangular; they are cone-like (see Definition~\ref{def:cone-like}
and Lemma~\ref{lem:cone-like}). Prisms over triangular polytopes are
also triangular because if $(e,f)$ is a triangularity pair for $P$
then so too is $(e\times I,f\times I)$ for $P\times I$.

(2) Let $P$ be an $n$-polytope and $v$ a vertex of $P$. If the link
$\L(v;P)$ is a triangular $(n-1)$-polytope then the polytope $P'$
obtained by truncating the vertex $v$ of $P$ is also triangular.
Indeed, if $e$ is a ridge and $f$ is a face of $P$ such that
$(e_{(v;P)},f_{(v;P)})$ is a triangularity pair for the link
$\L(v;P)$, then the pair of truncated faces $(e',f')$ is a
triangularity pair for $P'$. Thus, for example, if $v$ is a simple
vertex of $3$-polytope $P$, that is, $v$ is contained in exactly $3$
facets of $P$, then the polytope $P'$ obtained by truncating $v$ of
$P$ is triangular. (In this case, $(P')^*$ is also triangular.) See
Figure~\ref{fig:triangular} (a) and (b). Of course, not all
triangular polytopes are obtainable by this procedure. See
Figure~\ref{fig:triangular} (c).
\end{example}

\begin{figure}[htbp]
\includegraphics[width=1\linewidth]{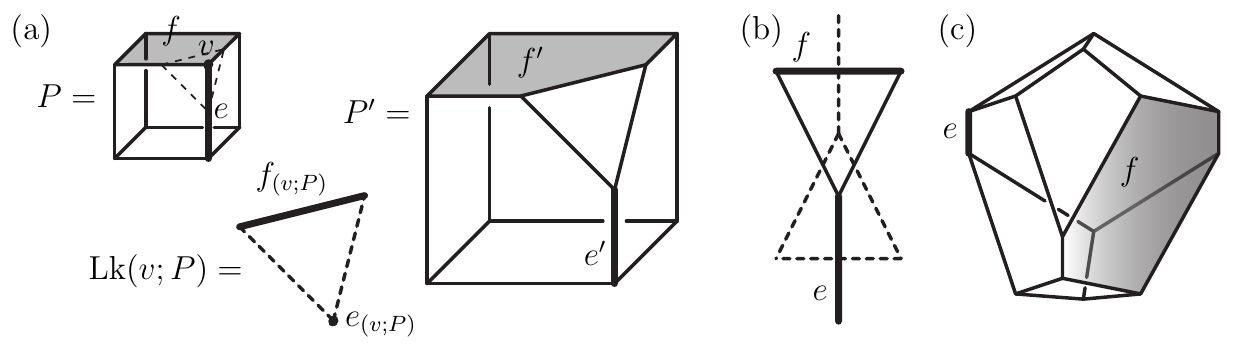}
\caption{\footnotesize (a) Illustration of the claim in
Example~\ref{exam:triangular} (2). The truncated cube $P'$ is
triangular. (b) If a $3$-polytope $P$ has a triangular facet with a
simple ($3$-valent) vertex, then its dual $P^*$ has the same
property. In this case, both $P$ and $P^*$ are triangular. (c) A
simple triangular $3$-polytope without triangular facets.}
\label{fig:triangular}
\end{figure}

\begin{example}[Non-triangular polytopes]
(1) Examples of non-triangular polytopes include $k$-gons ($k>3$),
Platonic solids other than tetrahedra, and prisms over
non-triangular polyhedra.

(2) One can transform any triangular polytope $P$ into a
non-triangular polytope as follows. Let $(e,f)$ be a triangularity
pair for $P$. The plan is to keep $e$ intact and break $f$ into
pieces so that no face of the new polytope $\hat{P}$ can give rise
to a triangularity pair with $e$. More precisely, let $f$ be a
minimal (with respect to inclusion) face of $P$ such that $(e,f)$ is
a triangularity pair for $P$. Place a vertex $v\in\S^n$ in the
exterior of $P$ arbitrarily close to the barycenter of $f$. The new
polytope $\hat{P}$ is obtained by "raising a pyramid" over the
residue $res(f;\partial P)$ with apex $v$. That is, we raise
pyramids with common apex $v$ over every face in the residue
$res(f;\partial P)$. See Figure~\ref{fig:raising}.
\begin{figure}[htbp]
\includegraphics[width=1\linewidth]{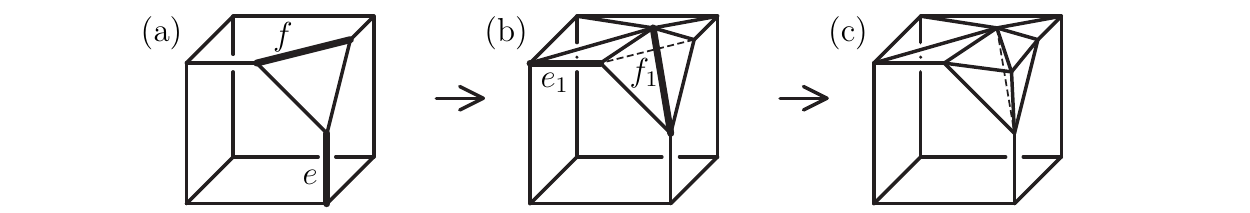}
\caption{\footnotesize (a) A truncated cube $P$ with triangularity
pair $(e,f)$. (b) Raising a pyramid over $res(f;\partial P)$, one
obtains a polytope $P'$ which has a triangularity pair $(e_1,f_1)$.
(c) Finally, raising a pyramid over $res(f_1;\partial P')$, one
obtains a polytope $P''$ which is non-triangular.}
\label{fig:raising}
\end{figure}
This procedure adds only a single vertex $v$ and does not change the
ridge $e$. If we keep doing this procedure for each minimal face $f$
with respect to $e$ and then the same procedure for all ridges $e$
of $P$, then we eventually get a non-triangular polytope.

(3) Similar reasoning shows that if we put new vertices $v_f$ over
all $i$-faces $f$ of $P$ ($i\neq0, n$) and raise pyramids
simultaneously over $f$ with apex $v_f$, then we get a
non-triangular polytope $\hat{P}$ whose boundary $\partial\hat{P}$
is combinatorially equal to the one which is obtained by performing
barycentric subdivision on the boundary $\partial P$ of the old
polytope $P$.

(4) Finally, in terms of duality the (non-)triangularity condition
translates as follows:
\begin{quote}
$P$ is non-triangular if and only if its dual $P^*$ satisfies the
property that, for each edge $e^*$ in $P^*$, the set
$st(e^*;\partial P^*)\setminus res(e^*;\partial P^*)$ is
disconnected.
\end{quote}
To see this, first notice that $e$ is a ridge of $P$ if and only if
$e^*$ is an edge of $P^*$. Indeed, $\s_1$ and $\s_2$ are facets of
$P$ such that $\s_1\cap\s_2=e$ if and only if $\s_1^*$ and $\s_2^*$
are vertices of $P^*$ spanning an edge $e^*$. In this case, we have
\begin{align*}
res(e^*;\partial P^*)&\subset st(\s_1^*;\partial P^*)\cap
st(\s_2^*;\partial P^*);\\
st(e^*;\partial P^*)&=st(\s_1^*;\partial P^*)\cup st(\s_2^*;\partial
P^*),
\end{align*}
which follows immediately from the definition. Because $P^*$ is a
(convex) polytope, however, the vertex stars $st(\s_1^*;\partial
P^*)$ and $st(\s_2^*;\partial P^*)$ are topological balls.
Therefore, the set $st(e^*;\partial P^*)\setminus res(e^*;\partial
P^*)$ is disconnected if and only if we have
$$
res(e^*;\partial P^*)=st(\s_1^*;\partial P^*)\cap st(\s_2^*;\partial
P^*).
$$

We now begin to prove the assertion made at the beginning. From the
previous discussion, we know that the set $st(e^*;\partial
P^*)\setminus res(e^*;\partial P^*)$ is connected for some edge
$e^*$ of $P^*$ if and only if there are faces $f_1^*$ and $f_2^*$ of
$P^*$ such that
\begin{itemize}
\item
for each $i=1,2$, $f_i^*\subset st(\s_i^*;\partial P^*)$, that is,
$\s_i^*$ is a vertex of $f_i^*$;
\item
$f^*:=f_1^*\cap f_2^*$ is not contained in $res(e^*;\partial P^*)$,
that is, there is no facet of $P^*$ containing both $e^*$ and $f^*$.
\end{itemize}
In terms of duality, this is equivalent to the condition that there
is a face $f$ of $P$ such that
\begin{itemize}
\item
for each $i=1,2$, $f_i$ is a face of the facet $\s_i$ of $P$;
\item
$f_1$ and $f_2$ are faces of $f$, and $f$ is disjoint from $e$.
\end{itemize}
In other words, there is a face $f$ of $P$ such that $f\cap\s_1=f_1$
is disjoint from $f\cap\s_2=f_2$, hence $(e,f)$ is a triangularity
pair for $P$ and $P$ is triangular.

(5) For example, let $P$ be a simple $n$-polytope, that is, every
vertex of $P$ is contained in exactly $n$ facets of $P$. Then the
facets of the dual $P^*$ are all $(n-1)$-simplices. Then the set
$st(e^*;\partial P^*)\setminus res(e^*;\partial P^*)$ is connected
for some edge $e^*$ if and only if either $P^*$ has a simple
$m$-simplex ($m<n-2$) or $\partial P^*$ has an edge-path of length
$3$ that does not bound a $2$-simplex. In conclusion, a simple
polytope $P$ is non-triangular if and only if $P$ has no $m$-simplex
($m>1$) and $\partial P^*$ has no nontrivial edge-path of length
$3$. Figure~\ref{fig:triangular} (c) shows a simple $3$-polytope
with no triangular facet but with a nontrivial edge-path of length
$3$ in the boundary of its dual.
\end{example}

\section{Proper convexity} \label{sec:pconvexity}

In this section we shall study only those residually convex
$n$-complexes $X$ which have no triangular $n$-cells. From
Corollary~\ref{cor:notriangle} we know that $X$ is isometric to a
convex proper domain in $\S^n$. Thus we may identify $X$ with its
image $dev(X)\subset\S^n$ and regard $X$ as a subset of $\S^n$. The
goal of this section is to prove the following theorem.

\begin{theorem} \label{thm:pconvexity}
Let $X\subset\S^n$ be a residually convex $n$-complex such that none
of the $n$-cells of $X$ are triangular. If $X$ has an $n$-cell $Q$
whose dual $Q^*$ is thick, then $X$ is a properly convex domain in
$\S^n$.
\end{theorem}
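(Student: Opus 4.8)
The plan is to produce $n+1$ supporting hyperplanes of $X$ in general position. These bound an $n$-simplex $\Delta$ in $\S^n$ with $X\subset\Delta$, and since $\Delta$ is properly convex and any convex subset of a properly convex set is properly convex, it will follow that $X$ is properly convex. By Corollary~\ref{cor:notriangle} the hypothesis already guarantees that $X$ is a convex proper domain in $\S^n$, so the entire content lies in constructing these hyperplanes and controlling their mutual position by means of the thickness of $Q^*$.

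First I would attach a supporting hyperplane of $X$ to each facet of the distinguished $n$-cell $Q$ by running a directed gallery. Since no $n$-cell of $X$ is triangular, no $n$-cell is \emph{cone-like}, so starting from $Q$ and crossing a facet $\s$ one can inductively choose in each successive $n$-cell an ``exit'' facet disjoint from the facet just crossed; this produces an infinite chain of $n$-cells $Q=Q_0,Q_1,Q_2,\dots$ with pairwise disjoint consecutive facets $\s=\s_0,\s_1,\dots$. Residual convexity applied repeatedly along this chain should force the halfspaces $\langle\s_i\rangle^+$ containing the tail of the gallery to be nested, so that the hyperplanes $\langle\s_i\rangle$ spanned by the $\s_i$ converge to a hyperplane $H_X(\s)$ supporting $X$. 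Carrying this out for every facet $\s$ of $Q$ — and every $n$-polytope has at least $n+1$ facets — yields at least $n+1$ supporting hyperplanes $H_X(\s)$, one for each facet of $Q$.

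Next I would pass to the dual. For each facet $\s$ of $Q$ let $x(\s)\in\S_n$ be the point dual to the supporting halfspace $H_X(\s)^+\supset X$, and recall that the vertices of $Q^*$ are precisely the points $\s^*$ dual to the halfspaces $\langle\s\rangle^+\supset Q$. The gallery construction restricts where $H_X(\s)$ can lie relative to $Q$: it cannot cross the interior of $Q$, and it stays within a range on the $\s$-side controlled by the chain. Under duality this restriction becomes the statement that $x(\s)$ lies in a prescribed neighbourhood of the vertex $\s^*$ inside $Q^*$ — its \emph{pavilion} — and I would record the resulting constraint as $x(\s)\in\mathrm{pav}(\s^*)$.

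The main obstacle is the last, combinatorial step: to show that whenever $Q^*$ is \emph{thick} one can always select $n+1$ of the points $x(\s)$, subject only to $x(\s)\in\mathrm{pav}(\s^*)$, that are in general position — equivalently, that the corresponding $n+1$ hyperplanes $H_X(\s)$ are in general position and hence bound an $n$-simplex trapping $X$. Thickness of $Q^*$ is exactly the combinatorial condition that rules out the degenerate configurations, the higher-dimensional analogue of a quadrilateral whose four edge-galleries span only two distinct supporting lines; verifying that it suffices I expect to do by induction on $n$, cutting $Q^*$ down along a facet or passing to a vertex link, matching the induced pavilions with the global ones, and using a dimension count generalizing the planar identity $5=2+2+1$. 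Once the $n+1$ hyperplanes in general position are in hand, $X$ lies inside the resulting properly convex spherical simplex and the theorem follows.
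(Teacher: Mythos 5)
Your overall strategy matches the paper's exactly: use directed galleries (possible because non-triangular polytopes are not cone-like, Lemma~\ref{lem:cone-like}) to attach a supporting hyperplane $H_X(\s)$ of $X$ to each facet $\s$ of $Q$ (Lemma~\ref{lem:gal}); pass to duals $x(\s)=[H_X(\s)^+]^*$ and record that the restriction Lemmas~\ref{lem:restriction1} and~\ref{lem:restriction2} force $x(\s)$ to land in the pavilion of the corresponding vertex of $Q^*$; and then use thickness of $Q^*$ to extract $n+1$ of the $x(\s)$ in general position, so that the dual hyperplanes bound an $n$-simplex trapping $X$.

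The genuine gap is in the last step, which you acknowledge you have not carried out. You say you ``expect'' to handle it by induction on $n$, cutting $Q^*$ along a facet or passing to a vertex link and doing a dimension count generalizing $5=2+2+1$. That is not what the paper does, and it is far from clear it would work: pavilions are defined using \emph{all} vertices of $Q^*$ connected by an edge to a given vertex, and this data does not restrict well to links or to facets — the pavilion of a vertex $v$ in a facet of $Q^*$ is not the same as the intersection of $\mathrm{pv}(v;Q^*)$ with that facet, so the induced constraints do not match the global ones, and the planar count $5=2+2+1$ has no obvious higher-dimensional analogue. The paper instead argues by contraposition with a direct, non-inductive lemma (Lemma~\ref{lem:thin}): if some hyperplane $H$ meets every pavilion $\mathrm{pv}(v;Q^*)$, then $Q^*$ is thin. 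The proof has two cases. If $H$ meets the \emph{interior} of every pavilion, a small perturbation makes $H$ miss all vertices while still meeting all pavilions; then for each vertex $v$ the inclusion $\mathrm{pv}(v;Q^*)\subset\mathrm{conv}(\{v\}\cup V(v))$ forces some neighbour $v'\in V(v)$ onto the other side of $H$, so $H$ is a cutting plane. If $H$ fails to meet the interior of some pavilion $\mathrm{pv}(v_0;Q^*)$, then the concavity of $\partial P(v_0)$ toward the pavilion forces the base $\underline{\mathrm{pv}}(v_0;Q^*)$ to be flat and contained in $H$, hence $V(v_0)\subset H$; this forces $Q^*$ to be a pyramid or a bipyramid, both of which admit cutting planes. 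You need to supply an argument of this kind (or an actual working induction) before the proof is complete.
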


Before we proceed to prove the above theorem, we introduce thick
polytopes and discuss some of their examples.

\begin{definition}[Thick polytopes] \label{def:thick}
Let $P\subset\S^n$ be an $n$-polytope. We call $P$ \emph{thin}
provided that there is a hyperplane $H\subset\S^n$ (called a
\emph{cutting plane} for $P$) which contains no vertices of $P$ such
that the following condition is satisfied by all vertices $v$ of
$P$:
\begin{quote}
if the vertex $v$ is in one halfspace determined by $H$ then there
is another vertex $v'$ in the other halfspace that is connected to
$v$ by an edge.
\end{quote}
An $n$-polytope is said to be \emph{thick} if it is not thin.
\end{definition}

\begin{remark}
Of course, by dualizing Definition~\ref{def:thick}, we could state
Theorem~\ref{thm:pconvexity} without mentioning the dual $Q^*$ of
$Q$. We adopted the current approach, however, because the dualized
definition is less intuitive:
\begin{quote}
the dual $P^*$ of an $n$-polytope $P$ is thin if and only if there
is a point $x\in\S^n$ such that, for each facet $\s$ of $P$, the
hyperplane $\langle\s\rangle$ spanned by $\s$ does not contain $x$
and if $x$ is in the halfspace $\langle\s\rangle^\pm$ then $x$ is in
$\langle\s'\rangle^\mp$ for some facet $\s'$ adjacent to $\s$.
\end{quote}
\end{remark}

\begin{example}[Thin polytopes]
Figure~\ref{fig:thin} shows some examples of thin polytopes. It is
clear that triangles and quadrilaterals are the only thin
$2$-polytopes. Pyramids, bipyramids and prisms are thin (see
Lemma~\ref{lem:thin} below). The regular icosahedron is also thin.
\begin{figure}[htbp]
\includegraphics[width=1\linewidth]{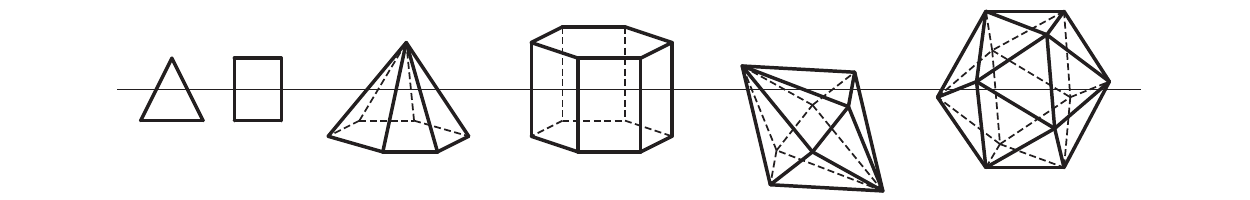}
\caption{\footnotesize Thin polytopes. The horizontal line
represents the cutting plane.} \label{fig:thin}
\end{figure}
\end{example}

\begin{remark}[Thick polytopes]
Definition~\ref{def:thick} suggests that polytopes with more
combinatorial complexity would have better chance to be thick and,
in some sense, thick polytopes are much more common than thin ones.
But it is rather hard to find simple combinatorial conditions which
imply thickness of polytopes.

In \cite{lee3} we classify thin simple $3$-polytopes and show that
they must contain a triangular or quadrilateral facet. Furthermore,
both thin simple $3$-polytopes and their dual polytopes turn out to
have Hamiltonian cycles. These facts imply that, for example,
dodecahedron, truncated icosahedron (soccer ball) and Tutte's
non-Hamiltonian simple polytopes are thick.
\end{remark}

To prove the above theorem we need some preparation. In the
following Sections~\ref{ssec:cone-like}-\ref{ssec:pav} we study more
about residually convex $n$-complexes without triangular $n$-cells
and develop a few related notions. The proof of
Theorem~\ref{thm:pconvexity} is then provided in the end of
Section~\ref{ssec:pav}.

\subsection{Cone-like polytopes} \label{ssec:cone-like}

The following definition and lemma are essential to the subsequent
constructions.

\begin{definition}[Cone-like polytopes] \label{def:cone-like}
A polytope $P$ is said to be \emph{cone-like} if it has a facet $\s$
such that $st(\s;\partial P)=\partial P$.
\end{definition}

Recall that the boundary $\partial P$ of a polytope $P$ is the union
of facets of $P$. Thus if $P$ is cone-like with respect to some
facet $\s$ then all facets of $P$ intersect $\s$. See
Figure~\ref{fig:cone-like}.

\begin{figure}[htbp]
\includegraphics[width=1\linewidth]{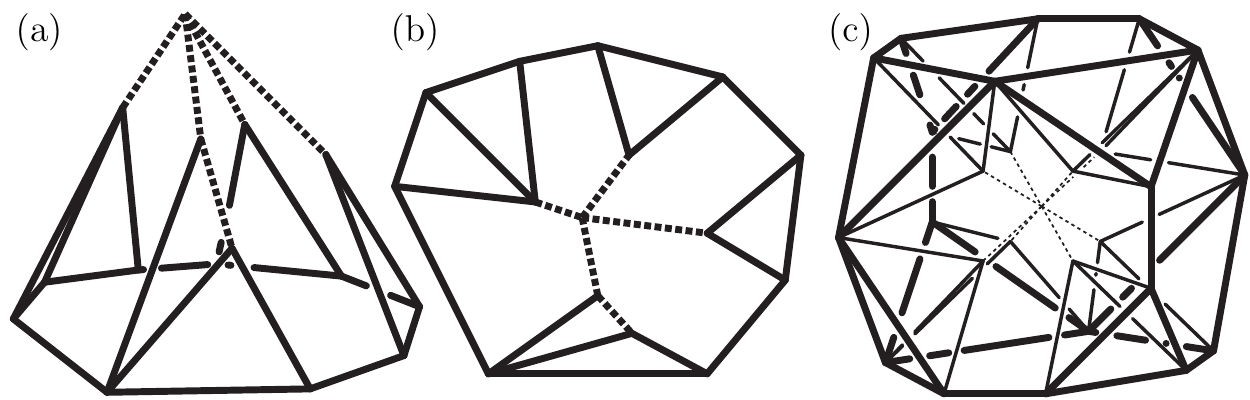}
\caption{\footnotesize Cone-like polytopes with $\Sigma$ dashed. (a)
A cone-like $3$-polytope. (b) A Schlegel diagram of the polytope in
(a). (c) A Schlegel diagram of a $4$-polytope whose $\Sigma$ is
$1$-dimensional. This $4$-polytope has $15$ facets.}
\label{fig:cone-like}
\end{figure}

\begin{lemma} \label{lem:cone-like}
Cone-like polytopes are triangular.
\end{lemma}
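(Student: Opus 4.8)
The plan is to show that if $P$ is cone-like, then it has a triangularity pair $(e,f)$ in the sense of Definition~\ref{def:triangular}; that is, a ridge $e$ and a face $f$ with $res(e;\partial P)\cap f$ disconnected. Let $\s$ be the facet of $P$ witnessing cone-likeness, so $st(\s;\partial P)=\partial P$, i.e.\ every facet of $P$ meets $\s$. The natural candidate for $f$ is $\s$ itself; then we must produce a ridge $e$ of $P$ such that $res(e;\partial P)\cap\s$ is disconnected. Here $res(e;\partial P)$ is the union of the two facets of $P$ containing the ridge $e$ (there are exactly two, since $\partial P$ is a manifold near a ridge), so $res(e;\partial P)\cap\s$ is the intersection of $\s$ with those two facets.

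First I would reduce to the link of a suitable face. Consider a ridge $e$ of $P$ that is \emph{not} contained in $\s$ (such a ridge exists: otherwise $\partial\s$ would contain every ridge of $P$, forcing $P$ to be a very degenerate object — more carefully, if $P$ has dimension $n$, a ridge not in $\s$ exists because $\s$ has only finitely many ridges of its own while $P$ has ridges in the interiors of other facets). Let the two facets containing $e$ be $\s_1$ and $\s_2$. Since $P$ is cone-like, both $\s_1$ and $\s_2$ meet $\s$, so $\s_1\cap\s$ and $\s_2\cap\s$ are each nonempty faces of $P$ contained in $\partial\s$. The point is to choose $e$ so that these two faces are \emph{disjoint}; then $res(e;\partial P)\cap\s=(\s_1\cap\s)\cup(\s_2\cap\s)$ is disconnected and $(e,\s)$ is a triangularity pair.

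To find such an $e$, I would argue by induction on $n$ using the link machinery of Section~2, exactly as the paper does for the codimension-$2$ reductions. The base case $n=2$ is immediate: a cone-like $2$-polytope with respect to an edge $\s$ is a triangle, whose two vertices not on $\s$ — or rather, whose opposite vertex — gives the triangularity pair (a triangle has a ridge, i.e.\ a vertex, $e$ with $res(e;\partial P)\cap f$ disconnected for $f$ the opposite edge). For the inductive step, pick a vertex $v$ of $\s$; the link $\L(v;P)$ is an $(n-1)$-polytope, and the facet $\s_{(v;P)}$ of $\L(v;P)$ coming from $\s$ still satisfies $st(\s_{(v;P)};\partial\L(v;P))=\partial\L(v;P)$, because facets of $P$ through $v$ correspond to facets of $\L(v;P)$ and each meets $\s$ through $v$ iff its link-facet meets $\s_{(v;P)}$. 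Hence $\L(v;P)$ is cone-like, so by induction it has a triangularity pair $(e',f')$; pulling this back through the correspondence of cells in \eqref{link-cell} and the link identity \eqref{links2}, together with the fact that $res(e;\partial P)\cap f$ disconnected can be detected at the level of $\L(v;P)$ whenever $v\in e$, yields a triangularity pair for $P$.

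The main obstacle will be the combinatorial bookkeeping in the inductive step: verifying that ``$res(e';\partial\L(v;P))\cap f'$ disconnected'' lifts to ``$res(e;\partial P)\cap f$ disconnected'' where $e,f$ are the cells of $P$ with $e_{(v;P)}=e'$, $f_{(v;P)}=f'$, and $v$ is a common vertex. This requires knowing that passing to the vertex link $\L(v;P)$ does not spuriously connect or disconnect the relevant intersections — i.e.\ that for cells containing $v$, connectivity of $res(e;\partial P)\cap f$ near $v$ is faithfully recorded in $\L(v;P)$. This is a local statement provable from the identification $\L(f;P)=\L(f_{(v;P)};\L(v;P))$ and the description of faces as intersections of facets, but it must be set up carefully to make the induction close; alternatively one can bypass the induction entirely by the direct argument of the second paragraph, choosing a ridge $e$ of maximal ``distance from $\s$'' and showing that its two facets meet $\s$ in disjoint faces precisely because $P$ is cone-like — the disjointness being the crux, since two facets through a ridge can only meet $\s$ in overlapping faces if that overlap is itself a face through the ridge, which would contradict the ridge lying outside $\s$.
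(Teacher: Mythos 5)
There is a genuine gap, and it appears in both branches of your argument. The pivotal difficulty is that a cone-like polytope need \emph{not} have any ridge disjoint from the distinguished facet $\s$. Your direct approach (paragraphs~1 and~3) tacitly needs such a ridge: you want $\s_1\cap\s$ and $\s_2\cap\s$ disjoint, but since $e=\s_1\cap\s_2$, we always have $e\cap\s\subset(\s_1\cap\s)\cap(\s_2\cap\s)$, so disjointness forces $e\cap\s=\emptyset$. You argue that a ridge ``not contained in $\s$'' exists and then slide into treating it as if it were disjoint from $\s$; these are different conditions. A pyramid over an $(n-1)$-polytope $B$ with apex $v$, taken with $\s=B$, is cone-like but every ridge meets $\s$ (ridges of $B$ lie in $\s$; the others are $v*\rho$ for $\rho$ a codimension-$2$ face of $B$, and these meet $\s$ in $\rho$). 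So the choice $f=\s$ with a ridge $e$ disjoint from $\s$ simply cannot always be made, and this is exactly why the paper introduces $\Sigma$ (the union of faces disjoint from $\s$) and splits into two cases according to whether $\dim\Sigma=n-2$ or $\dim\Sigma<n-2$. When $\dim\Sigma<n-2$, the paper is forced to take $e=\s\cap\s'\subset\s$ for another facet $\s'$ and to pick a $(k+1)$-face $f$ (with $k=\dim\Sigma$) satisfying $\s'\cap f\subset\Sigma$; this produces a triangularity pair with $f\ne\s$, which your approach never considers.

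Your inductive approach via vertex links (paragraph~2) does correctly show that $\L(v;P)$ is cone-like with respect to $\s_{(v;P)}$ whenever $v$ is a vertex of $\s$, but the lift of a triangularity pair does not close. If $(e',f')$ is a triangularity pair for $\L(v;P)$ and $e,f$ are the cells of $P$ containing $v$ with $e_{(v;P)}=e'$, $f_{(v;P)}=f'$, then by construction $v\in e\cap f$. Writing $res(e;\partial P)=\s_1\cup\s_2$, both $\s_1\cap f$ and $\s_2\cap f$ contain $v$, so $res(e;\partial P)\cap f$ is connected and $(e,f)$ is \emph{not} a triangularity pair for $P$. The link $\L(v;P)$ ``deletes'' $v$, so two sets meeting only near $v$ in $P$ can become disjoint downstairs; hence disconnectedness in the link does not lift to disconnectedness in $P$, and the crucial bookkeeping you flagged as ``the main obstacle'' is in fact an obstruction, not a technicality. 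The paper's two-case argument with the auxiliary set $\Sigma$ is the device that circumvents both problems simultaneously.
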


\begin{proof}
Suppose that $P$ is a cone-like $n$-polytope and $\s$ is a side of
$P$ such that $st(\s;\partial P)=\partial P$. The boundary $\partial
P$ of $P$ is topologically an $(n-1)$-dimensional sphere with cell
structure induced from the faces of $P$. Let $\Sigma\subset\partial
P$ be the union of all faces of $P$ that are disjoint from $\s$.
Because all facets of $P$ intersect $\s$, the dimension of $\Sigma$
is at most $n-2$.

\emph{Case I}. If $\Sigma$ has dimension $n-2$, choose any ridge $e$
of $P$ in $\Sigma$. Denote by $\s_1$ and $\s_2$ the two adjacent
facets of $P$ along $e$. Because $\s_1\cap\s_2=e$ is disjoint from
$\s$, we see that
\begin{align*}
res(e;\partial
P)\cap\s&=(\s_1\cup\s_2)\cap\s\\
&=(\s_1\cap\s)\cup(\s_2\cap\s)
\end{align*}
is disconnected. Therefore, $P$ is triangular.

\emph{Case II}. If $\Sigma$ has dimension $k$ with $k<n-2$, all
faces of $P$ of dimension $> k$ intersect $\s$. Let $\s'$ be a facet
of $P$ other than $\s$. Let $e=\s\cap \s'$ be a face of $P$. Because
all ridges of $P$ intersect $\s$, all facets of $\s'$ intersect $\s$
and hence $e$. Thus we have $st(e;\partial\s')=\partial\s'$. It
follows that $e$ is a facet of $\s'$ (hence, a ridge of $P$), $\s'$
is cone-like with respect to $e$ and that $res(e;\partial
P)=\s\cup\s'$. Now, because the dimension of $\Sigma$ is $k$, we can
choose a $(k+1)$-dimensional face $f$ of $P$ so that $\s'\cap
f\neq\emptyset$ and $\s'\cap f\subset\Sigma$. Then, because all
faces of $P$ of dimension $> k$ intersect $\s$, the intersection
$\s\cap f$ is non-empty and disjoint from $\s'\cap f\subset\Sigma$.
We thus have that
\begin{align*}
res(e;\partial
P)\cap f&=(\s\cup\s')\cap f\\
&=(\s\cap f)\cup(\s'\cap f)
\end{align*}
is disconnected and hence that $P$ is triangular.
\end{proof}

\begin{remark}
Not all triangular polytopes are cone-like. Such examples can be
seen in Figure~\ref{fig:triangular} (a) and (c).
\end{remark}

\subsection{Directed galleries and supporting hyperplanes}
\label{ssec:gallery}

From now on we assume that $X\subset\S^n$ is a residually convex
$n$-complex such that none of the $n$-cells of $X$ is triangular. It
follows from Lemma~\ref{lem:cone-like} that no $n$-cells of $X$ are
cone-like; this fact enables us to consider the following objects in
$X$.

We fix a specified $n$-cell $Q$ in $X$. Let $\s$ be a facet of $Q$.
Then there is an $n$-cell $P_1$ of $X$ adjacent to $Q$ along $\s$.
Because $P_1$ is not cone-like, we can choose a facet $s_1$ of $P_1$
which is disjoint from $\s$. Then there is an $n$-cell $P_2$
adjacent to $P_1$ along $s_1$. Because $P_2$ is not cone-like, $P_2$
has a facet $s_2$ which is disjoint from $s_1$. Continuing in this
manner we obtain two infinite sequences $\{P_j\}$ of $n$-cells and
$\{s_j\}$ of facets such that $P_j\cap P_{j+1}=s_j$ for all $j\ge0$,
where we set $P_0=Q$ and $s_0=\s$. See Figure~\ref{fig:gallery}.
This motivates the following definition:
\begin{figure}[htbp]
\includegraphics[width=1\linewidth]{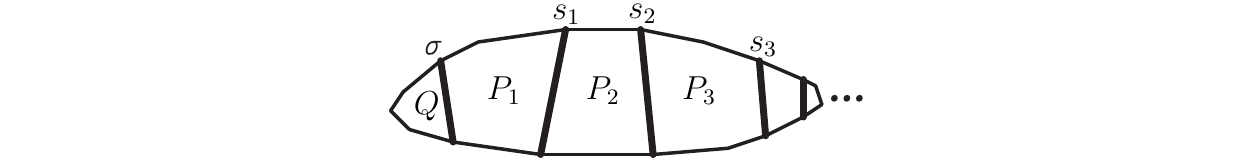}
\caption{\footnotesize A directed gallery from $Q$ in the direction
of $\s$.} \label{fig:gallery}
\end{figure}
\begin{definition}[Directed galleries] \label{def:gal}
A \emph{directed gallery} $Gal_{(Q,\s)}(P_j,s_j)$ from $Q$ in the
direction of $\s$ is the union $\bigcup_{j=0}^\infty P_j$ of an
infinite family of $n$-cells of $X$ such that for each $j\ge0$
\begin{itemize}
\item
$P_j\cap P_{j+1}=s_j$ is a facet of $X$, where $P_0=Q$ and $s_0=\s$;
\item
$s_j\cap s_{j+1}=\emptyset$.
\end{itemize}
\end{definition}

Thus the previous discussion says that to each facet $\s$ of $Q$ we
can associate a directed gallery $Gal_{(Q,\s)}(P_j,s_j)$ from $Q$ in
the direction of $\s$. Of course, because of the choices of $s_j$ we
made, the directed galleries are not uniquely determined by $Q$ and
$\s$. The lemma below, however, shows that they satisfy a common
property in relation to the iterated stars $st^j(Q)$ of $Q$ in $X$.

First notice the following. Because $X$ is strongly residually
convex by Theorem~\ref{thm:notriangle}, the proof of
Theorem~\ref{thm:main} applied to $X$ and $Q$ shows that the
iterated stars $st^j(Q)$ are convex $n$-polyballs. Recall that $X$
is assumed to be a subset of $\S^n$. Thus the stars $st^j(Q)$ form a
nested sequence of closed $n$-dimensional convex proper subsets of
$\S^n$.

\begin{lemma} \label{lem:gal}
Let $Gal_{(Q,\s)}(P_j,s_j)$ be a directed gallery from $Q$ in the
direction of $\s$. Then the following assertions are true:
\renewcommand{\labelenumi}{\textup{(\arabic{enumi})}}
\begin{enumerate}
\item
Each facet $s_j$ ($j\ge0)$ is in the boundary of the star $st^j(Q)$
of $Q$.
\item
The gallery $Gal_{(Q,\s)}(P_j,s_j)$ is a convex subset of $\S^n$.
\end{enumerate}
\end{lemma}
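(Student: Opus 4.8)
The plan is to prove the two assertions in tandem by induction on $j$, since the convexity of the gallery will be deduced locally (near the ``seam'' facets $s_j$) and this local picture is exactly where assertion (1) is used. The key structural input, already noted just before the statement, is that the stars $st^j(Q)$ form a nested sequence of convex $n$-polyballs in $\S^n$ (this uses Theorem~\ref{thm:notriangle} and the proof of Theorem~\ref{thm:main}), and that each $P_{j+1}$ is the unique $n$-cell adjacent to $P_j$ across $s_j$, with $s_j\cap s_{j+1}=\emptyset$ by the definition of directed gallery.

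For assertion (1): we argue by induction that $s_j$ lies in the boundary of $st^j(Q)$ and, along the way, that $P_{j+1}\not\subset st^j(Q)$ (so that the gallery genuinely exits each star). The base case $j=0$ is immediate: $s_0=\s$ is a facet of $Q=st^0(Q)$, hence in its boundary, and $P_1\neq Q$ because $P_1$ is glued to $Q$ across $\s$ in the manifold $X$ (if $P_1$ equalled $Q$ the developing map could not be injective on $st(Q)$). For the inductive step, assume $s_{j-1}\subset\partial\, st^{j-1}(Q)$ and $P_j\not\subset st^{j-1}(Q)$. Since $P_j$ meets $st^{j-1}(Q)$ along the facet $s_{j-1}$, we have $P_j\subset st^j(Q)$ but, as $P_j$ is an $n$-cell not contained in $st^{j-1}(Q)$, its facet $s_{j-1}$ sits on $\partial\, st^{j-1}(Q)$ and the opposite ``fresh'' facets of $P_j$ contribute to $\partial\, st^j(Q)$. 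The facet $s_j$ is, by construction, disjoint from $s_{j-1}$; I claim $s_j\subset\partial\, st^j(Q)$. Otherwise $s_j$ would be an interior facet of $st^j(Q)$, so the $n$-cell $P_{j+1}$ on its far side would already lie in $st^j(Q)$; but then $P_{j+1}$ is one of the $n$-cells of $st^j(Q)=st(st^{j-1}(Q))$, so $P_{j+1}$ meets $st^{j-1}(Q)$. Since $P_{j+1}$ meets $P_j$ along $s_j$ and $s_j\cap s_{j-1}=\emptyset$ while $P_j\cap st^{j-1}(Q)=s_{j-1}$, one checks that $P_{j+1}\cap st^{j-1}(Q)$ is a nonempty union of faces of $P_{j+1}$ disjoint from $s_j$; tracing this back through the adjacency $P_{j-1}\cap P_j=s_{j-1}$ and $P_j\cap P_{j+1}=s_j$ forces $P_{j+1}$ to ``wrap around'' $P_j$ in a way incompatible with $s_{j-1}\cap s_j=\emptyset$ inside the polyball $st^j(Q)$. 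This contradiction gives $s_j\subset\partial\, st^j(Q)$ and simultaneously $P_{j+1}\not\subset st^j(Q)$, completing the induction.

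For assertion (2): write $G=\bigcup_{j=0}^\infty P_j$ and show $G$ is convex in $\S^n$. By Corollary~\ref{cor:locglo} (applied to $M_1^n=\S^n$) it suffices to show $G$ is locally convex and that its image is complete and locally compact with respect to the induced path metric; local compactness and completeness follow from local finiteness of the complex together with each $(P_j)_\S$ being compact convex. For local convexity, the only places where convexity could fail are along the facets $s_j$ where $P_j$ meets $P_{j+1}$ (the interiors of the $P_j$ and all other boundary cells of $G$ are fine since each $P_j$ is convex and, away from the $s_j$'s, $G$ locally coincides with a single $n$-cell). Near an interior point of $s_j$, the set $G$ locally looks like $P_j\cup P_{j+1}=res(s_j;X)$, which is a convex $n$-polyball by residual convexity of $X$ (Lemma~\ref{lem:residue}~(3)). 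Near lower-dimensional cells $e\subsetneq s_j$ in the ``seam'' one reduces, via the link identity \eqref{isom''} and Lemma~\ref{lem:ridge}, to the codimension-$2$ case, where again the relevant local model is contained in $res(e;X)$, convex by Lemma~\ref{lem:residue}~(2). Hence $G$ is locally convex, and Corollary~\ref{cor:locglo} yields that $G_\S$ is convex.

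The main obstacle is the induction in assertion (1): one must rule out the possibility that a gallery ``turns back'' into a previous star, i.e.\ that $s_j$ fails to lie on $\partial\, st^j(Q)$. The hypotheses $s_j\cap s_{j+1}=\emptyset$ (non-cone-like choice of successive facets) and the fact that each $st^j(Q)$ is an embedded convex polyball, not just an abstract complex, are exactly what make this impossible; packaging this cleanly — probably by looking at the link of $s_{j-1}$ in $st^j(Q)$ and observing that the two facets $s_{j-1},s_j$ of $P_j$ cannot both be ``interior seen from $st^{j-1}(Q)$'' and ``interior of $st^j(Q)$'' — will require a little care, but no deep new idea beyond the embedded-polyball structure already established.
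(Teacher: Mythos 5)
Your overall plan — prove (1) by induction, then deduce (2) from local convexity of truncations via Lemma~\ref{lem:ridge} and Corollary~\ref{cor:locglo} — matches the paper's, but both parts have genuine gaps where you wave at the hard step instead of doing it.

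For (1), the heart of the matter is ruling out that $s_j$ is interior to $st^j(Q)$, and you correctly reduce this to showing $P_{j+1}\cap st^{j-1}(Q)=\emptyset$. But the justification you give (``tracing this back\ldots forces $P_{j+1}$ to wrap around $P_j$ in a way incompatible with $s_{j-1}\cap s_j=\emptyset$'') is not an argument, and you acknowledge as much. The paper does this with a concrete device: consider the residue $res(s_j)=P_j\cup P_{j+1}$, which is a convex $n$-polyball by residual convexity. The facet $s_{j-1}$ of $P_j$ is disjoint from $P_{j+1}$ (because $s_{j-1}\cap s_j=\emptyset$), hence lies in $\partial\,res(s_j)$ and, because $P_j$ is a convex polytope, $s_{j-1}$ is a \emph{maximal} convex subset of $\partial\,res(s_j)$. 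Since $st^{j-1}(Q)$ is convex and also contains $s_{j-1}$ in its boundary, these two convex polyballs meet only along their boundaries, and the intersection $res(s_j)\cap st^{j-1}(Q)$ is a convex subset of $\partial\,res(s_j)$ containing $s_{j-1}$; maximality then forces $res(s_j)\cap st^{j-1}(Q)=s_{j-1}$. That is the step your proposal is missing, and it is exactly what makes the separation $P_{j+1}\cap st^{j-1}(Q)=\emptyset$ — and hence everything downstream — come out.

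For (2), the reduction via Lemma~\ref{lem:ridge} is right, but the argument you give for local convexity at a ridge $e\subsetneq s_j$ is that ``the relevant local model is contained in $res(e;X)$, which is convex.'' Containment in a convex set does not make a subset convex: $res(e;G)$ could a priori be a non-contiguous sub-union of the cells around $e$. What you actually need is that $e$ lies in at most two \emph{consecutive} $n$-cells of the gallery, so that $res(e;G)$ is either a single cell or $res(e;res(s_j;X))$, both convex. This contiguity is exactly what the separation from part (1) gives: since $P_{j+1}$ is disjoint from $st^{j-1}(Q)\supset P_i$ for $i\le j-1$, cells $P_i$ and $P_k$ with $|i-k|\ge 2$ are disjoint. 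The paper packages this cleanly by working with the finite truncations $G_k=\bigcup_{j=0}^k P_j$, showing each is an $n$-polyball, checking that each ridge $e\subset\partial G_k$ meets one or two adjacent cells, invoking Lemma~\ref{lem:ridge}, and then taking the nested union — which also avoids having to verify completeness of the infinite $G$ directly as your argument would require.
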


\begin{proof}
(1) The proof is by induction on $j\ge0$. When $j=0$, it is clear
that $s_0=\s$ is in the boundary of $st^0(Q)=Q$. Now assume that the
conclusion is true up to the $(j-1)$-th step. We need to show that
$s_j$ is in the boundary of $st^j(Q)$.

Because $P_j$ intersects $st^{j-1}(Q)$ at $s_{j-1}$, we have that
$$
s_j\subset P_j\subset st[st^{j-1}(Q)]=st^j(Q).
$$
To show that $s_j$ is in the boundary of $st^j(Q)$, consider the
residue $res(s_j)=P_j\cup P_{j+1}$. It is a convex subset of $\S^n$
by residual convexity of $X$. Moreover, it contains $s_{j-1}$ in its
boundary, since $s_{j-1}$ is a facet of $P_j$:
$$
s_{j-1}\subset\partial(P_j\cup P_{j+1})=\partial res(s_j).
$$
However, $s_{j-1}$ is disjoint from $s_j$ and hence from $P_{j+1}$.
Because $P_j$ is a convex polytope, it follows that $s_{j-1}$ is a
maximal convex subset in the boundary of $res(s_j)$. Now, by the
induction hypothesis, $s_{j-1}$ is also contained in the boundary of
the convex subset $st^{j-1}(Q)$. From the convexity of $res(s_j)$
and $st^{j-1}(Q)$, and from the maximality of $s_{j-1}$, it follows
that
$$
res(s_j)\cap st^{j-1}(Q)=s_{j-1}.
$$
Thus $P_{j+1}$ is disjoint from $st^{j-1}(Q)$ and cannot intersect
the interior of the star $st^j(Q)$. In particular, $s_j\subset
P_{j+1}$ does not intersect the interior of $st^j(Q)$ and hence must
be in the boundary of $st^j(Q)$. The induction is complete.

(2) Let $G_k=\bigcup_{j=0}^k P_j$. The previous proof of (1) shows
that $G_k$ is contained in $st^k(Q)$ and intersects $P_{k+1}$
exactly along $\s_k$. These facts inductively imply that $G_k$ is an
$n$-polyball for all $k\ge0$. Now, fix $k$ and let $e$ be a ridge in
the boundary of $G_k$. From the construction of galleries, it is
clear that $e$ intersects either a single $n$-cell of $G_k$ or two
adjacent $n$-cells of $G_k$. In either case, the residual convexity
of $X$ implies that the link $\L(e;G_k)$ is convex. Since $e$ is
arbitrary, it follows from Lemma~\ref{lem:ridge} that the polyball
$G_k$ is convex. Since $k$ is arbitrary, $G_k$ is convex for all
$k\ge0$. Because the nested sequence $\{G_k\}$ exhausts the gallery
$Gal_{(Q,\s)}(P_j,s_j)$, the conclusion follows.
\end{proof}

Recall that we fixed a specified $n$-cell $Q$ in $X$. Let $\s$ be a
facet of $Q$. Let $Gal_{(Q,\s)}(P_j,s_j)$ be a directed gallery from
$Q$ in the direction of $\s$. The above lemma says that each facet
$s_j$ in this gallery is in the boundary of the star $st^j(Q)$.
Denote by $\langle s_j\rangle$ the hyperplane spanned by $s_j$.
Because $st^j(Q)$ is convex, $\langle s_j\rangle$ must be a
supporting hyperplane of $st^j(Q)$. Now consider the sequence
$\{\langle s_j\rangle\}$ of hyperplanes of $\S^n$. Because $\S^n$ is
compact this sequence converges to a hyperplane which we denote by
\begin{align} \label{eqn:support}
H_X(\s).
\end{align}
Because the convex sets $st^j(Q)$ exhaust $X$ and their supporting
hyperplanes $\langle s_j\rangle$ converge to $H_X(\s)$, it
immediately follows that $H_X(\s)$ is a supporting hyperplane of the
convex subset $X\subset\S^n$. See Figure~\ref{fig:support}~(a).
\begin{figure}[htbp]
\includegraphics[width=1\linewidth]{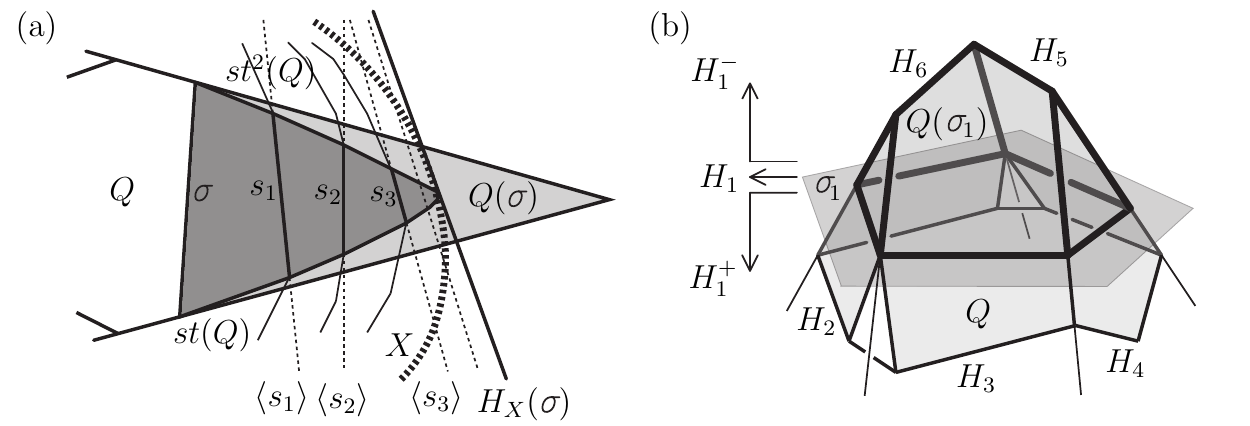}
\caption{\footnotesize (a) To each facet $\s$ of $Q$ we can
associate a directed gallery $Gal_{(Q,\s)}(P_j,s_j)$ in the
direction of $\s$, which again determines a supporting hyperplane
$H_X(\s)$ of $X$. (b) To each facet $\s_1$ of $Q$ we can assign a
cone-like polytope $Q(\s_1)$.} \label{fig:support}
\end{figure}

In this manner, to each facet $\s$ of $Q$, we can assign a
supporting hyperplane $H_X(\s)$ of $X$. Notice that the hyperplane
$H_X(\s)$ is not uniquely determined by the facet $\s$ because the
associated gallery is not uniquely determined by $\s$ either.
Therefore, we are rather interested in all possible locations of
$H_X(\s)$ in $\S^n$. As will be explained below, the restriction on
their location is given by the specified $n$-cell $Q$ and its
facets.

We may assume that the $n$-cell $Q\subset X$ is expressed as
$$
Q=\bigcap_{i=1}^m H_i^+,
$$
where $m\ge n+1$ and the $\{H_i^+\}$ is an irredundant family of
halfspaces bounded by hyperplanes $H_i$. Then the facets $\s_i$ of
$Q$ are of the form $\s_i=Q\cap H_i$ for $1\le i\le m$.

Consider the facet $\s_1$ of $Q$. Let $\s_2,\s_3,\ldots,\s_k$
($k<m$) be the facets of $Q$ that are adjacent to $\s_1$ along
ridges. Consider the $n$-polytope $Q(\s_1)$ defined as the
intersection of the $k$ halfspaces $H_1^-,H_2^+,\ldots,H_k^+$:
\begin{align} \label{eqn:q}
Q(\s_1)=H_1^-\cap H_2^+\cap\cdots\cap H_k^+.
\end{align}
See Figure~\ref{fig:support} (b). (The polytope $Q(\s_1)$ is
cone-like and its vertices in $\s_1$ are simple.) Consider also a
directed gallery $Gal_{(Q,\s_1)}(P_j,s_j)$ from $Q$ in the direction
of $\s_1$. By Lemma~\ref{lem:gal}~(2), it is a convex subset of
$\S^n$. However, the $(k-1)$ hyperplanes $H_2,\ldots,H_k$ support
$Q$ and hence $Gal_{(Q,\s_1)}(P_j,s_j)$. It follows that the set
$Gal_{(Q,\s_1)}(P_j,s_j)\setminus Q$ is contained in the polytope
$Q(\s_1)$:
$$
\bigcup_{j=1}^\infty P_j=Gal_{(Q,\s_1)}(P_j,s_j)\setminus Q\subset
Q(\s_1).
$$
Recall that each hyperplane $\langle s_j\rangle$ supports the star
$st^j(Q)$ of $Q$. Thus no $\langle s_j\rangle$ ($j\ge1$) can
intersect a neighborhood of $Q$ (namely, the interior of $st(Q)$)
but always intersects the interior of $Q(\s_1)$. Being the the limit
of the hyperplanes $\langle s_j\rangle$, the hyperplane $H_X(\s_1)$
cannot intersect $Q$ but must intersect $Q(\s_1)$. See
Figure~\ref{fig:support} (a).

If we define $Q(\s_i)$ analogously for each facet $\s_i$ of $Q$, the
analogous statements hold for the hyperplanes $H_X(\s_i)$:

\begin{lemma} \label{lem:restriction1}
Given an $n$-cell $Q$ in $X$, the hyperplanes $H_X(\s_i)$ and the
$n$-polytopes $Q(\s_i)$ associated to facets $\s_i$ of $Q$ satisfy
the following relations: for all $i$,
$$
H_X(\s_i)\cap Q=\emptyset\quad\text{and}\quad H_X(\s_i)\cap
Q(\s_i)\neq\emptyset.
$$
\end{lemma}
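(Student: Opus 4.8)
The plan is to note that both relations have, in effect, already been verified for the facet $\s_1$ in the discussion preceding the statement, and that that verification is completely symmetric under relabeling the facets of $Q$; so the proof amounts to running the same argument for an arbitrary facet $\s_i$. Concretely, I would fix a facet $\s_i$ of $Q$, let $\s_{j_1},\dots,\s_{j_{k-1}}$ be the facets of $Q$ adjacent to $\s_i$ along ridges, and form the cone-like polytope $Q(\s_i)$ as in \eqref{eqn:q}. Since none of the $n$-cells of $X$ is triangular, Lemma~\ref{lem:cone-like} shows none is cone-like, so a directed gallery $Gal_{(Q,\s_i)}(P_j,s_j)$ from $Q$ in the direction of $\s_i$ exists; by Lemma~\ref{lem:gal}(1) each facet $s_j$ lies in the boundary of $st^j(Q)$, and, $X$ being strongly residually convex by Theorem~\ref{thm:notriangle}, the proof of Theorem~\ref{thm:main} applied to $Q$ shows each $st^j(Q)$ is a convex $n$-polyball, so $\langle s_j\rangle$ supports $st^j(Q)$ and $H_X(\s_i)$ is the limit hyperplane of \eqref{eqn:support}.

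Next I would recall the two facts about the hyperplanes $\langle s_j\rangle$, $j\ge1$, that drive the argument. First, $\langle s_j\rangle$ supports $st^j(Q)\supseteq st(Q)$; since $X$ has empty boundary, $Q$ lies in the interior of the full-dimensional polyball $st(Q)$, and an open set contained in a closed halfspace lies in the corresponding open halfspace, so $Q\cap\langle s_j\rangle=\emptyset$. Second, as observed in the discussion preceding the statement, $Gal_{(Q,\s_i)}(P_j,s_j)\setminus Q=\bigcup_{j\ge1}P_j$ is contained in $Q(\s_i)$ (the hyperplanes spanning the facets of $Q$ adjacent to $\s_i$ support the convex gallery of Lemma~\ref{lem:gal}(2), and the gallery lies in the halfspace opposite $Q$ across $\langle\s_i\rangle$); in particular $s_j\subset P_j\subset Q(\s_i)$, so $\langle s_j\rangle\cap Q(\s_i)\neq\emptyset$ for every $j\ge1$.

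Finally I would take limits as $j\to\infty$. From $st(Q)\subseteq\langle s_j\rangle^+$ for all $j\ge1$, continuity gives $st(Q)\subseteq H_X(\s_i)^+$; since the interior of $st(Q)$ is open and contains $Q$, it lies in the open halfspace bounded by $H_X(\s_i)$, so $H_X(\s_i)\cap Q=\emptyset$. For the other relation, choose $x_j\in\langle s_j\rangle\cap Q(\s_i)$; by compactness of $Q(\s_i)$ a subsequence of $(x_j)$ converges to a point of $Q(\s_i)$ lying on $H_X(\s_i)$, so $H_X(\s_i)\cap Q(\s_i)\neq\emptyset$. As $\s_i$ was arbitrary, the lemma follows.

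I do not expect a genuine obstacle, since the statement merely consolidates the construction carried out just before it. The one point that deserves a little care is the \emph{strict} disjointness $H_X(\s_i)\cap Q=\emptyset$, as opposed to the weaker $Q\subseteq H_X(\s_i)^+$: this is precisely where one uses that $Q$ sits in the \emph{open} interior of the polyball $st(Q)$, hence implicitly that $X$ has empty boundary.
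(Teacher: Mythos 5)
Your proposal is correct and follows the same route as the paper: in the source, Lemma~\ref{lem:restriction1} has no separate proof environment; the two relations are established in the paragraph preceding the lemma for the single facet $\s_1$, and the lemma is then stated with the remark ``If we define $Q(\s_i)$ analogously for each facet $\s_i$ of $Q$, the analogous statements hold.'' You correctly identify that the content is exactly this discussion relabeled, and you usefully tighten two points the paper leaves implicit: (i) the strict disjointness $H_X(\s_i)\cap Q=\emptyset$ survives the limit because $Q$ sits in the \emph{open} interior of $st(Q)$ (using that $X$ has empty boundary), and since $st(Q)\subset\langle s_j\rangle^+$ for all $j\ge1$, the limiting closed halfspace $H_X(\s_i)^+$ still contains $st(Q)$, so the open set $\operatorname{int} st(Q)\supset Q$ lies in the open halfspace; (ii) the nonemptiness $H_X(\s_i)\cap Q(\s_i)\neq\emptyset$ is obtained by extracting a convergent subsequence of points $x_j\in\langle s_j\rangle\cap Q(\s_i)$ and using that $Q(\s_i)$ is closed in the compact sphere, rather than the paper's slightly stronger (and unneeded) claim about the $\langle s_j\rangle$ meeting the \emph{interior} of $Q(\s_i)$. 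No gap.
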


These restrictions on the location of $H_X(\s_i)$ are more
conveniently described in terms of duality, since the duals of the
halfspaces determined by $H_X(\s_i)$ are just points. The next
subsection is devoted to this description.

\subsection{Pavilions and $n+1$ hyperplanes in general position} \label{ssec:pav}

We continue to assume that $X\subset\S^n$ is a residually convex
$n$-complex such that none of its $n$-cells is triangular and that
$Q$ is a fixed $n$-cell in $X$. In our previous discussion we
expressed the $n$-cell $Q$ as
$$
Q=\bigcap_{i=1}^m H_i^+.
$$
Now, denote by $v_i=(H_i^+)^*$ the dual of the halfspace $H_i^+$.
Then each $v_i$ becomes a vertex of the dual polytope $Q^*$ of $Q$
(see Section~\ref{ssec:dual}):
$$
Q^*=\left[\bigcap_{i=1}^m
H_i^+\right]^*=\textup{conv}\left[\bigcup_{i=1}^m
(H_i^+)^*\right]=\textup{conv}\{v_1,v_1,\ldots,v_m\}.
$$
Recall also the definition \eqref{eqn:q} of the $n$-polytope
$Q(\s_1)$ associated to the facet $\s_1$ of $Q$:
$$
Q(\s_1)=H_1^-\cap H_2^+\cap\cdots\cap H_k^+.
$$
Its dual $Q(\s_1)^*$ is the convex hull of the vertices
$v_2,\ldots,v_k$ and $-v_1$, where $-v_1=(H_1^-)^*$ is the antipodal
point of $v_1$ (see Section~\ref{ssec:dual}):
\begin{align*}
Q(\s_1)^*&=\left(H_1^-\cap H_2^+\cap\cdots\cap
H_k^+\right)^*\\&=\textup{conv}\left\{(H_1^-)^*,(H_2^+)^*,\ldots,(H_k^+)^*\right\}\\
&=\textup{conv}\{-v_1,v_2,\ldots,v_k\}.
\end{align*}
Note that the vertices $v_2,\ldots,v_k$ of $Q(\s_1)^*$ (and $Q^*$)
are connected to $v_1$ by the edges of $Q^*$ which are dual to the
ridges $\s_1\cap\s_i$ ($2\le i\le k$) of $Q$.

Recall the definition \eqref{eqn:support} of the supporting
hyperplane $H_X(\s_1)$ of $X$ associated to the facet $\s_1$ of $Q$.
Now let $H_X(\s_1)^+$ be the halfspace which is bounded by
$H_X(\s_1)$ and which contains the $n$-complex $X$. Denote by
$x(\s_1)=[H_X(\s_1)^+]^*$ the dual point of $H_X(\s_1)^+$. In
Lemma~\ref{lem:restriction1} we summarized the restrictions on the
position of $H_X(\s_1)$. Dualizing these we obtain the following
conditions on the location of $x(\s_1)$:
\begin{quote}
Because $H_X(\s_1)^+$ contains $Q$ but $H_X(\s_1)$ does not
intersect $Q$, the point $x(\s_1)$ must be in the interior of $Q^*$.
On the other hand, because $H_X(\s_1)$ intersects $Q(\s_1)$, the
point $x(\s_1)$ cannot be an interior point of $Q(\s_1)^*$.
\end{quote}
These restrictions on $x(\s_1)$ motivate the following definition.
Recall that $S^\circ$ denotes the interior of a set $S$.

\begin{definition}[Pavilion] \label{def:pav}
Let $P$ be an $n$-polytope in $\S^n$. Let $v$ be a vertex of $P$ and
let $V(v)$ be the set of all vertices of $P$ that are connected to
$v$ by edges of $P$. Denote by $P(v)=\textup{conv}\left(\{-v\}\cup
V(v)\right)$ the convex hull of $-v$ and $V(v)$. The \emph{pavilion}
$\textup{pv}(v;P)$ of $v$ in $P$ is by definition
$$
\textup{pv}(v;P)=P^\circ\setminus P(v)^\circ.
$$
The \emph{base} $\underline{\textup{pv}}(v;P)$ of the pavilion
$\textup{pv}(v;P)$ is defined as
$$
\underline{\textup{pv}}(v;P)=P^\circ\cap\partial P(v).
$$
Note that the base $\underline{\textup{pv}}(v;P)$ is an open subset
of $\partial P(v)$. See Figure~\ref{fig:pav}.
\end{definition}

\begin{figure}[htbp]
\includegraphics[width=1\linewidth]{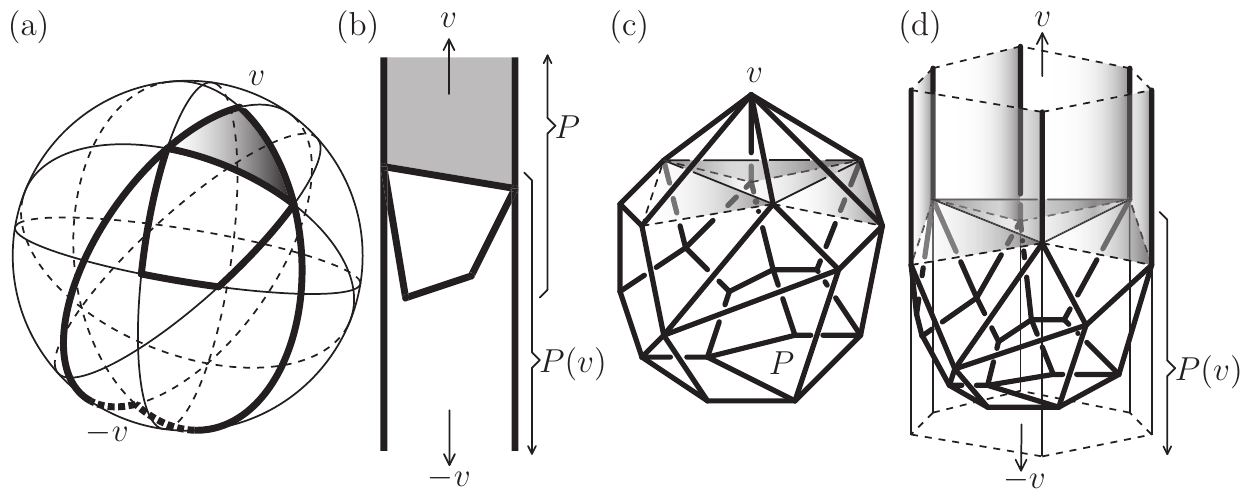}
\caption{\footnotesize Pavilions. (a) $P$ is a pentagon with
$\textup{pv}(v;P)$ shaded. (b) A view of (a) in an affine $2$-plane.
(c) $P$ is a $3$-polytope. The base $\underline{\textup{pv}}(v;P)$
of a pavilion is shaded and consists of four triangles. (d) A view
of (c) in an affine $3$-plane. The six facets containing $v$
determine a hexagonal cylinder. The pavilion $\textup{pv}(v;P)$ is
shaded.} \label{fig:pav}
\end{figure}

To summarize, the point $x(\s_1)=[H_X(\s_1)^+]^*$ we considered
above must be in the pavilion $\textup{pv}(v_1;Q^*)$ of $v_1$ in
$Q^*$. Similarly, by considering the analogous restrictions on
$H_X(\s_i)$ with respect to $Q$ and $Q(\s_i)$ given by
Lemma~\ref{lem:restriction1}, we obtain the following.

\begin{lemma} \label{lem:restriction2}
Let $Q$ be an $n$-cell in $X$. For each facet $\s_i$ of $Q$ ($1\le
i\le m$), let $H_X(\s_i)$ be the supporting hyperplanes of $X$
associated to $\s_i$. Let $H_X(\s_i)^+$ denote the halfspace which
is bounded by $H_X(\s_i)$ and which contains the $n$-complex $X$.
Then, for all $i$, the dual points of $H_X(\s_1)^+$
$$
x(\s_i)=[H_X(\s_i)^+]^*
$$
must satisfy
$$
x(\s_i)\in\textup{pv}(v_i;Q^*).
$$
\end{lemma}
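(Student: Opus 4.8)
The statement is the dual reformulation of Lemma~\ref{lem:restriction1}, so the plan is simply to make the passage to duals rigorous. The one piece of actual content I would isolate first, as a preliminary claim, is the following elementary duality fact: \emph{if $P\subset\S^n$ is an $n$-polytope and $H\subset\S^n$ is a hyperplane with closed halfspace $H^+$, then $(H^+)^*\in(P^*)^\circ$ if and only if $P\subset H^+$ and $P\cap H=\emptyset$.} I would prove this by passing to the cones over $\S^n$: writing $H^+=\{x\in\R^{n+1}:\langle b,x\rangle\le0\}$ with $b\neq o$, so that $(H^+)^*$ is represented by $b$ and $\c_{P^*}=(\c_P)^*$, the claim becomes that $b$ lies in the interior of $(\c_P)^*$ if and only if $\langle b,x\rangle<0$ for all $x\in\c_P\setminus\{o\}$. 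For the ``if'' direction, $x\mapsto\langle b,x\rangle$ is continuous and strictly negative on the compact set $\c_P\cap\S^n$, hence bounded above there by some $-\varepsilon<0$, and then every $b'$ within $\varepsilon$ of $b$ still satisfies $b'\in(\c_P)^*$, so $b$ is interior. For the ``only if'' direction, if $\langle b,x_0\rangle\ge0$ for some $x_0\in\c_P\setminus\{o\}$ then either $b\notin(\c_P)^*$ already, or $\langle b,x_0\rangle=0$ and then $b+\delta x_0\notin(\c_P)^*$ for every $\delta>0$; in either case $b$ is approached by points outside $(\c_P)^*$ and so is not an interior point. Here I would use only the duality facts of Section~\ref{ssec:dual}, in particular that $\c_P$ is line-free (since $P$ is a polytope), so that $P^*$ is genuinely $n$-dimensional and $(P^*)^\circ$ is its interior.

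Granting this, the lemma follows at once. Fix a facet $\s_i$ of $Q$. On the one hand $Q\subset X\subset H_X(\s_i)^+$ by the choice of the halfspace $H_X(\s_i)^+$, and $Q\cap H_X(\s_i)=\emptyset$ by Lemma~\ref{lem:restriction1}; the preliminary claim with $P=Q$ and $H=H_X(\s_i)$ then gives $x(\s_i)=[H_X(\s_i)^+]^*\in(Q^*)^\circ$. On the other hand $Q(\s_i)$ is an $n$-polytope and $H_X(\s_i)\cap Q(\s_i)\neq\emptyset$, again by Lemma~\ref{lem:restriction1}, so the right-hand side of the biconditional fails for $P=Q(\s_i)$, whence $x(\s_i)\notin(Q(\s_i)^*)^\circ$. (Note that for this second step there is no need to determine on which side of $H_X(\s_i)$ the polytope $Q(\s_i)$ lies.) Finally I would identify $Q(\s_i)^*$ with the polytope $Q^*(v_i)$ of Definition~\ref{def:pav}: the vertices of $Q^*$ joined to $v_i=\s_i^*$ by an edge are exactly the duals of the ridges $\s_i\cap\s_j$ of $Q$, i.e.\ the $v_j$ with $\s_j$ adjacent to $\s_i$ along a ridge, and these are precisely what span $Q(\s_i)^*$ together with $-v_i=(H_i^-)^*$, as computed just before the lemma. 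Thus $Q^*(v_i)=\textup{conv}(\{-v_i\}\cup V(v_i))=Q(\s_i)^*$, and combining the two memberships yields
$$
x(\s_i)\in(Q^*)^\circ\setminus(Q^*(v_i))^\circ=\textup{pv}(v_i;Q^*),
$$
which is the assertion.

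I do not expect a genuine obstacle here: the only place any work is required is the preliminary duality claim, which is a standard separation/compactness argument once one passes to cones, and everything else is bookkeeping with the definitions of $H_X(\s_i)$, of pavilions, and of the dual of a polytope. The one point to be careful about is to invoke line-freeness of the cones over $Q$ and over $Q(\s_i)$ -- guaranteed by the fact, already recorded when these polytopes were introduced, that both are $n$-polytopes -- so that ``interior of the dual polytope'' carries its intended meaning.
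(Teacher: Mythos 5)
Your proof is correct and follows the same route as the paper, which does not give Lemma~\ref{lem:restriction2} a displayed proof but instead establishes it in the informal discussion preceding it (dualizing the conclusions of Lemma~\ref{lem:restriction1} and computing $Q(\s_i)^*=\textup{conv}\{-v_i,v_2,\ldots,v_k\}$, which is exactly the set $P(v_i)$ of Definition~\ref{def:pav}). The only step the paper leaves implicit is precisely your preliminary claim that $(H^+)^*\in(P^*)^\circ$ iff $P\subset H^+$ and $P\cap H=\emptyset$, and your cone-plus-compactness argument is a correct and standard way to make that rigorous.
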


Assuming another Lemma~\ref{lem:thin} below, we are now ready to
prove Theorem~\ref{thm:pconvexity}.

\begin{proof}[Proof of Theorem~\ref{thm:pconvexity}]
Let $Q$ be the $n$-cell of $X$ whose dual $Q^*$ is thick. As before,
we may assume that the $n$-cell $Q\subset X$ is expressed as
$$
Q=\bigcap_{i=1}^m H_i^+,
$$
where $m\ge n+1$ and the $\{H_i^+\}$ is an irredundant family of
halfspaces bounded by hyperplanes $H_i$. Then the facets $\s_i$ of
$Q$ are of the form $\s_i=Q\cap H_i$ for $1\le i\le m$ and the
vertices of the dual $Q^*$ are $v_i=(H_i^+)^*$.

As in Section~\ref{ssec:gallery}, for each facet $\s_i$ of $Q$, we
choose a directed gallery from $Q$ in the direction of $\s_i$ to
obtain a supporting hyperplane $H_X(\s_i)$ of $X$. We let
$x(\s_i)=[H_X(\s_i)^+]^*$ be the dual point of $H_X(\s_i)^+$. Then
Lemma~\ref{lem:restriction2} tells us that
$$
x(\s_i)\in\textup{pv}(v_i;Q^*)
$$
for all $1\le i\le m$.

Suppose by way of contradiction that the $m$ points $x(\s_i)$ are
contained in a hyperplane $H\subset\S_n$. Then $H$ necessarily
intersects all pavilions $\textup{pv}(v_i;Q^*)$ in $Q^*$. However,
Lemma~\ref{lem:thin} below implies that if this is the case then the
polytope $Q^*$ must be thin, contrary to our assumption. Therefore,
no hyperplane can contain all $m$ points $x(\s_i)$ simultaneously.

Hence there are some $n+1$ points $x(\s_i)$ in general position,
that is, they are not contained in a common hyperplane. This fact
again implies that there are $n+1$ supporting hyperplanes
$H_X(\s_i)$ of $X$ that are in general position, that is, their
intersection is empty. Then the $n+1$ supporting hyperplanes
$H_X(\s_i)$ determine an $n$-simplex in $\S^n$, which contains $X$.
Therefore, the $n$-complex $X$ must be a properly convex subset of
$\S^n$ and this completes the proof of Theorem~\ref{thm:pconvexity}.
\end{proof}

\begin{lemma} \label{lem:thin}
An $n$-polytope $P$ in $\S^n$ is thin provided that there exists a
hyperplane $H\subset\S^n$ which intersects all pavilions
$\textup{pv}(v;P)$ of vertices $v$ of $P$.
\end{lemma}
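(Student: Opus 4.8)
The plan is to prove that, after a small generic perturbation, $H$ itself is a cutting plane for $P$ in the sense of Definition~\ref{def:thick}. Recall that a vertex-free hyperplane \emph{fails} to witness thinness precisely when some vertex $v$ of $P$, together with the set $V(v)$ of its edge-neighbours, lies entirely in one of the two closed halfspaces it bounds. So, starting from a hyperplane $H$ meeting every pavilion $\textup{pv}(v;P)=P^{\circ}\setminus P(v)^{\circ}$, I would first pass to a nearby hyperplane $H'$ that avoids all vertices of $P$ and still meets, for each vertex $v$, the genuinely open set $O_v:=P^{\circ}\setminus P(v)$; then I would show that such an $H'$ cannot leave any $\{v\}\cup V(v)$ on one side, which forces $P$ to be thin.

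For the perturbation step I would note that $O_v$ is nonempty (points of $P^{\circ}$ near the vertex $v$ miss the closed set $P(v)$, since a vertex of $P$ is never a convex combination of $-v$ and the remaining vertices) and that $\textup{pv}(v;P)\subset\overline{O_v}$ (a point of $P^{\circ}$ on $\partial P(v)$ is a limit of points of $P^{\circ}$ outside $P(v)$). Hence for each $v$ one can choose $y_v\in O_v$ with a ball $B(y_v,r_v)\subset O_v$ and with $y_v$ within distance $r_v/2$ of $H$; putting $r=\min_v r_v/2>0$, any hyperplane moved off $H$ by less than $r$ still passes through every $B(y_v,r_v)$ and so meets every $O_v$. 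Since the hyperplanes through the finitely many vertices of $P$ form a closed set with empty interior in the $n$-dimensional space of hyperplanes, I can choose such an $H'$ avoiding all vertices. This part is routine but must be stated carefully, exactly because pavilions are not open.

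The geometric heart of the argument is the identity
$$
P\ \subseteq\ R_v\cup P(v),\qquad R_v:=\textup{conv}\bigl(\{v\}\cup V(v)\bigr),
$$
for every vertex $v$ of $P$, which I would prove by duality. Write the dual polytope $P^{*}\subset\S_n$ as an irredundant intersection $\bigcap_{i=1}^{m}H_i^{+}$, so that $P=\textup{conv}\{v_1,\dots,v_m\}$ with $v_i=(H_i^{+})^{*}$; take $v=v_1$, and let $\s_2,\dots,\s_k$ be the facets of $P^{*}$ meeting $\s_1=P^{*}\cap H_1$ along a ridge, so that $V(v)=\{v_2,\dots,v_k\}$ and $P(v)=\textup{conv}\{-v_1,v_2,\dots,v_k\}$. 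Passing to cones in $\R^{n+1}$ gives $\c_{R_v}=\textup{cone}\{\tilde v_1,\dots,\tilde v_k\}$ and $\c_{P(v)}=\textup{cone}\{-\tilde v_1,\tilde v_2,\dots,\tilde v_k\}$, whose union is the single finitely generated — hence closed and convex — cone $W=\textup{cone}\{\pm\tilde v_1,\tilde v_2,\dots,\tilde v_k\}$. Dualising, $W^{*}=\c_{H_1}\cap\bigcap_{j=2}^{k}\c_{H_j^{+}}$, and this equals $\c_{\s_1}$ because the facet $\s_1$, regarded as a polytope inside its own hyperplane $H_1$, is cut out by its ridges $\s_1\cap\s_j$ $(2\le j\le k)$ — the irredundancy statement from Section~\ref{polycone}. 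Hence $W=(\c_{\s_1})^{*}=\c_{\s_1^{*}}$, and since $\s_1\subset P^{*}$ forces $P=P^{**}\subset\s_1^{*}$, we get $\c_P\subset W$, i.e.\ $P\subset R_v\cup P(v)$.

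Finally I would combine the two ingredients. Suppose, for contradiction, that $H'$ leaves some $\{v\}\cup V(v)$ in one halfspace; as $H'$ misses all vertices, that set lies in an \emph{open} hemisphere $U$, and then $R_v=\textup{conv}(\{v\}\cup V(v))\subset U$ because the convex hull of a subset of an open hemisphere lies in that hemisphere (the cone over an open hemisphere is a convex cone). By the identity above, $O_v=P^{\circ}\setminus P(v)\subset R_v\subset U$, so $H'$ is disjoint from $O_v$, contradicting the choice of $H'$. Thus no vertex of $P$ has all of $\{v\}\cup V(v)$ on one side of $H'$, so $H'$ is a cutting plane and $P$ is thin. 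The step I expect to be the real obstacle is establishing $P\subseteq R_v\cup P(v)$: everything hinges on getting the cone bookkeeping right, in particular on noticing that $\c_{R_v}\cup\c_{P(v)}$ is already a single convex cone (so that $W^{**}=W$), and it is precisely here that the antipodal vertex $-v$ appearing in the definition of the pavilion is used.
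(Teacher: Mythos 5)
The geometric core of your argument is sound: the containment $P\subseteq R_v\cup P(v)$ is correct, and your duality proof of it is a nice elaboration of something the paper asserts without justification (the paper invokes $\textup{pv}(v;P)\subset\textup{conv}(\{v\}\cup V(v))$ directly in its Case I). Your final contradiction, given a vertex-free $H'$ meeting all $O_v$, is also exactly right.

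The gap is in the perturbation step, and it is the same subtlety that splits the paper's proof into two cases. You claim that for each vertex $v$ one can find $y_v\in O_v$ and $r_v>0$ with $B(y_v,r_v)\subset O_v$ and $d(y_v,H)\le r_v/2$. This is simply false whenever $H\cap O_v=\emptyset$ (i.e.\ when $H$ touches the pavilion only along its base $\underline{\textup{pv}}(v;P)$ and not the open part $O_v$): if such a $y_v,r_v$ existed, then $H$ would pass within distance $r_v/2<r_v$ of $y_v$, forcing $H\cap B(y_v,r_v)\ne\emptyset\subset O_v$, a contradiction. Your justification that $O_v\ne\emptyset$ and $\textup{pv}(v;P)\subset\overline{O_v}$ is correct as stated, but those two facts do not give the quantitative relation you need between the inradius at $y_v$ and $d(y_v,H)$. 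Equivalently, your argument silently assumes $H$ already enters the interior $O_v$ of every pavilion; the hypothesis of the lemma only gives $H\cap\textup{pv}(v;P)\ne\emptyset$, which allows $H$ to graze the base.

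The paper treats this scenario separately as its Case II. There, if $H$ misses $O_{v_0}$ for some $v_0$ while still meeting $\textup{pv}(v_0;P)$, one argues that the base $\underline{\textup{pv}}(v_0;P)$ must be flat and contained in $H$, forcing $V(v_0)\subset H$; from this the polytope is shown to be a pyramid or bipyramid, and a cutting plane is then exhibited by a direct (and different) perturbation of $H$. To repair your proof you would need to add an analogous case analysis; the uniform "small generic perturbation" will not cover it.
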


\begin{proof}
Let $H$ be a hyperplane which intersects all pavilions
$\textup{pv}(v;P)$ of vertices $v$ of $P$. There are two
possibilities depending on whether or not $H$ intersect the
interiors of all pavilions $\textup{pv}(v;P)$.

\emph{Case I}. Suppose that $H$ intersect the interiors of all
pavilions $\textup{pv}(v;P)$. Then we can perturb $H$ slightly so
that $H$ still intersects all pavilions $\textup{pv}(v;P)$ but
contains no vertices of $P$. Let $v$ be a vertex of $P$. Then $v$ is
in one halfspace, say $H^+$, determined by $H$. We need to show that
there is a vertex $v'\in V(v)$ which is in the other halfspace
$H^-$. Suppose on the contrary that all vertices of $V(v)$ are in
$H^+$. Because no vertex of $P$ is in $H$, we have that both $v$ and
$V(v)$ are in the interior of $H^+$. Thus the convex hull
$\textup{conv}\left(\{v\}\cup V(v)\right)$ is also in the interior
of $H^+$ and this gives a contradiction because we have
$$
\textup{pv}(v;P)\subset\textup{conv}\left(\{v\}\cup V(v)\right)
$$
and the pavilion $\textup{pv}(v;P)$ cannot intersect $H$. Therefore,
there is a vertex $v'\in V(v)$ which is in the halfspace $H^-$.
Since $v$ is arbitrary, this shows that $H$ is a cutting plane for
$P$ and hence $P$ is thin.

\emph{Case II}. Suppose that $H$ does not intersect the interior of
some pavilion $\textup{pv}(v_0;P)$. Note that the base
$\underline{\textup{pv}}(v_0;P)$ is an open subset of $\partial
P(v_0)$ and $\partial P(v_0)$ is \emph{concave} toward
$\textup{pv}(v_0;P)$. Thus, in this case, the base
$\underline{\textup{pv}}(v_0;P)$ has to be flat so that
$$
H\cap \textup{pv}(v_0;P)=\underline{\textup{pv}}(v_0;P)
$$
and hence the set $V(v_0)$ also has to be in $H$, that is,
$V(v_0)\subset H$. Let $v_0\in H^+$ without loss of generality.
Because $P$ is a (convex) polytope, this implies that those vertices
of $P$ which are not in $\{v_0\}\cup V(v_0)$, if any, have to be in
the interior of the halfspace $H^-$. There are two subcases to be
considered:

(1) If there is such a vertex $v_1$ of $P$, then we must have that
$V(v_1)=V(v_2)$ because otherwise the base
$\underline{\textup{pv}}(v_1;P)$ of the pavilion is contained in the
interior of $H^-$ and hence the pavilion $\textup{pv}(v_1;P)$ cannot
intersect $H$. It follows that $P$ is a bipyramid with tips
$\{v_0,v_1\}$ and with base the $(n-1)$-polytope
$\textup{conv}V(v_0)=\textup{conv}V(v_1)$. Now, we can perturb $H$ a
little bit so that $H$ still separates $v_0$ and $v_1$ and so that
$H$ does not intersect $V(v_0)$ but intersects the interior of
$\textup{conv}V(v_0)$. Then $H$ becomes a cutting plane for $P$.

(2) If there is no such vertex, then $P$ is a pyramid with apex
$v_0$ over the $(n-1)$-polytope $\textup{conv}V(v_0)$. In this case,
if we push $H$ slightly toward the apex $v_0$ then $H$ becomes a
cutting plane for $P$.

Therefore, in both subcases, $P$ has a cutting plane and is
necessarily thin.
\end{proof}

\subsection{Speculations}

In this subsection we shall again consider those residually convex
$n$-complexes which contain no triangular $n$-cells. We speculate
upon other approaches to proper convexity than the one provided by
Theorem~\ref{thm:pconvexity}.

Figure~\ref{fig:pentagon} (c) illustrates
Theorem~\ref{thm:pconvexity}: the given residually convex
$2$-complex $X\subset\S^2$ consists only of quadrilaterals and a
single pentagon $Q$. The dual of $Q$ is again a pentagon and, hence,
is thick. Thus $X$ satisfies the assumption of
Theorem~\ref{thm:pconvexity} and must be properly convex. Indeed,
since the rest of polygons in $X$ other than $Q$ are quadrilaterals,
each edge $\s_i$ ($1\le i\le5$) of $Q$ uniquely determines a gallery
in the direction of $\s_i$ from $Q$, which takes up the whole
triangle $Q(\s_i)$ and which uniquely determines a supporting line
$H_X(\s_i)$ (see Section~\ref{ssec:gallery}). Among those five
supporting lines, two pairs of them coincide but, as guaranteed by
the proof of the theorem, the remaining distinct three are in
general position bounding a $2$-simplex, whose interior is equal to
the $2$-complex $X$ in this case.

On the other hand, Figure~\ref{fig:pentagon} (b) explains why the
thickness condition is necessary: the given residually convex
$2$-complex $X\subset\S^2$ consists only of quadrilaterals. Each
quadrilateral in $X$ uniquely determines four supporting lines to
$X$, but two pairs of them always coincide to give rise to only two
distinct supporting lines to $X$. The $2$-complex $X$ is equal to
the domain bounded by the two supporting lines and hence is not
properly convex.

However, a generic residually convex $2$-complex without triangles
looks like the one in Figure~\ref{fig:pentagon} (a), which consists
of quadrilaterals and pentagons. In fact, one can obtain such a
generic $2$-complex using only quadrilaterals. See
Figure~\ref{fig:strict1} (b) and compare with the non-generic
example in Figure~\ref{fig:strict1} (a). This fact implies that
there are other causes than thickness which force complexes to be
properly convex. Observe that, in contrast with the complexes in
Figure~\ref{fig:pentagon} (b) and (c), the complex in
Figure~\ref{fig:pentagon} (a) has the following property. In
general, the underlying set of the star $st^k(Q)$ of a cell $Q$ can
be regarded as a polytope. The combinatorial complexity of the
polytope $st^k(Q)$ in Figure~\ref{fig:pentagon} (a) grows very fast
as $k$ goes to infinity. On the other hand, in
Figure~\ref{fig:pentagon} (b) and (c), the combinatorial complexity
of the stars $st^k(Q)$ is limited to only that of quadrilaterals or
pentagons. This observation raises the following issue:
\begin{figure}[htbp]
\includegraphics[width=1\linewidth]{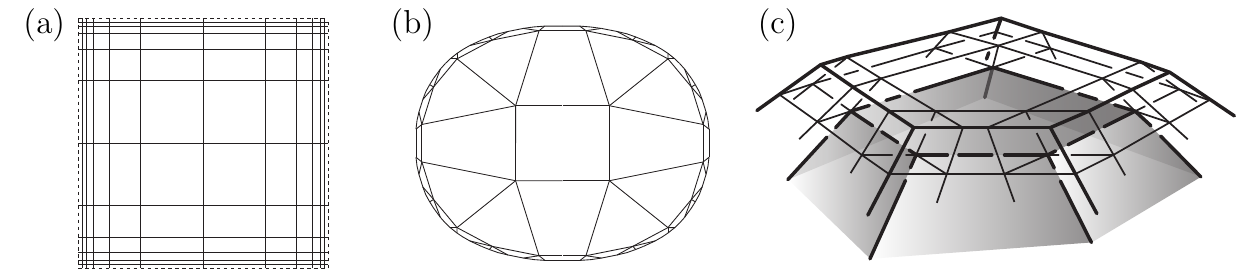}
\caption{\footnotesize (a) A properly convex domain consisting only
of squares. This example is uninteresting because it is a product of
two properly convex domains. (b) A properly convex domain. It is a
strictly residually convex $2$-complex and consists only of
quadrilaterals. The union of any two adjacent quadrilaterals is a
hexagon. (c) Using cubes and prisms, one can construct the stars
$st^k(Q)$ of a simple polytope $Q$ so that the combinatorics of
$st^k(Q)$ and $Q$ are the same.} \label{fig:strict1}
\end{figure}

Instead of considering those galleries starting from a fixed cell
$Q$ in the direction of its facets, we could also consider galleries
starting from facets in the boundary of a star $st^k(Q)$ for
sufficiently large $k$. If the combinatorial complexity of the stars
$st^k(Q)$ (viewed as polytopes) grows unlimitedly as $k$ increases,
then so too does the chance that there are many distinct supporting
hyperplanes associated to galleries starting from the facets in the
boundary of $st^k(Q)$, so that we can always choose $n+1$ such in
general position. Thus one may ask:
\begin{quote}
Question: \textsl{Find conditions which guarantee that the
combinatorial complexity of $st^k(Q)$ strictly increases as $k$
increases.}
\end{quote}
This question is interesting in view of the fact that properly
convex real projective structures behave very similarly to metric
spaces of non-positive curvature (see
Section~\ref{ssec:convex-rpn}); answers to this question can
possibly turn out to be restrictions on the fundamental domains and
the gluing maps for such spaces. A number of reasonable approaches
to this question are as follows:

(1) Figure~\ref{fig:strict1} (b) motivates the following condition
in addition to residual convexity: for each adjacent pair of
$n$-cells $P_1$ with $k_1$ facets and $P_2$ with $k_2$ facets, we
require that the underlying set of $P_1\cup P_2$ be an $n$-polytope
with $k_1+k_2-2$ facets. In other words, we require that no two
facets in the boundary of $P_1\cup P_2$ span a common hyperplane. We
may call this property as \emph{strict residual convexity}. In the
case when the notion of angle makes sense, this condition amounts to
not allowing right-angled polytopes.

Even when we do not require strict residual convexity, there are
other possible answers to the above question.

(2) As we observed in Figure~\ref{fig:pentagon} (b) and (c),
quadrilaterals are not good for our current purposes. Similar
examples are also possible in general dimension with $n$-cubes
taking the role of quadrilaterals, if $Q$ is a simple polytope. See
Figure~\ref{fig:strict1} (c). Even if we disallow $n$-cubes,
however, by taking product with a $2$-dimensional example, we may
obtain a complex consisting of $n$-prisms which is not properly
convex. It seems that a complex without cubes and with a non-prism
cell has good chance to be properly convex.

\begin{figure}[htbp]
\includegraphics[width=1\linewidth]{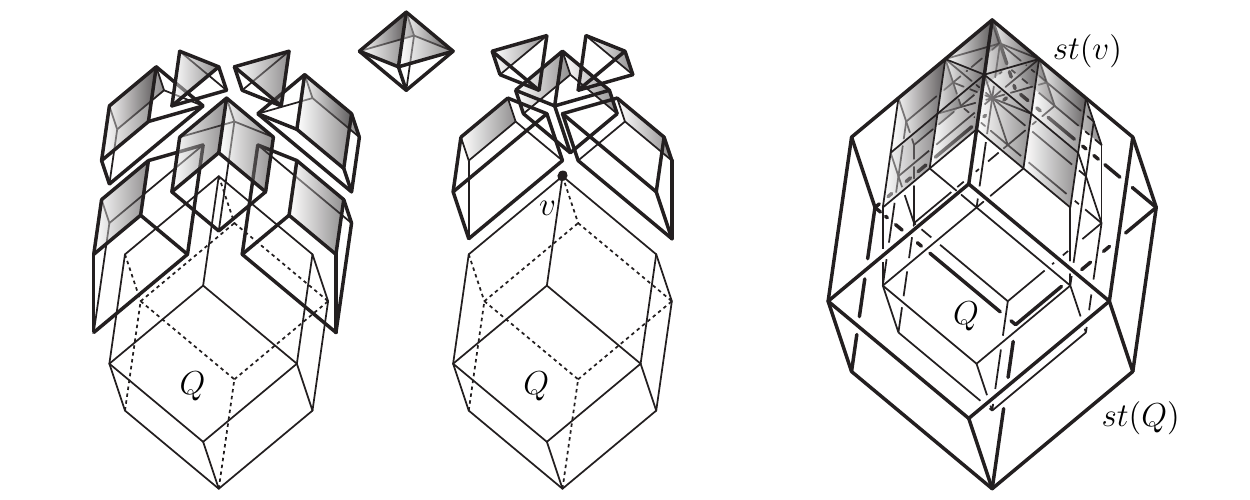}
\caption{\footnotesize The $3$-polytope $Q$ and its star $st(Q)$ are
rhombic dodecahedra. The star $st(v)$ of a non-simple vertex $v$ of
$Q$ contains four pyramids.} \label{fig:strict2}
\end{figure}

(3) Suppose that $X$ contains an $n$-polytope $Q$ which has a
non-simple vertex. Then it is very likely that the combinatorial
complexity of $st^k(Q)$ strictly increases as $k$ increases:
Figure~\ref{fig:strict2} exhibits a way to construct the star
$st(Q)$ of a rhombic dodecahedron $Q$ so that the combinatorics of
$Q$ and $st(Q)$ are the same. The rhombic dodecahedron $Q$ has
non-simple vertices. The star $st(v)$ of one of those non-simple
vertices is shown in the picture. Observe that the star $st(v)$
contains four tetrahedra, which are prohibited in our current
discussion because they are triangular. Moreover, the star $st(v)$
also contains some cubes. Thus it is very unlikely that one can
construct the star $st(Q)$ of a non-simple polytope $Q$ without
using triangular polytopes so that the combinatorics of $Q$ and
$st(Q)$ are the same, even though cubes are allowed.

\section{Applications to real projective structures} \label{sec:rpn}

In this section we introduce real projective structures and prove
Theorem~\ref{introthm:main}.

\subsection{Convex real projective structures}
\label{ssec:convex-rpn}

Let $X$ be a smooth manifold and $G$ a Lie group acting on $X$. An
\emph{$(X,G)$-structure} on a manifold $M$ is a maximal atlas
$\{(U_i, \phi_i)\}$ on $M$, where the family $\{U_i\}$ forms an open
covering of $M$ and the maps $\phi_i: U_i\to X$ are coordinate
charts such that the restriction of the transition map $ \phi_j
\circ \phi_i^{-1}$ to each component of $\phi_i(U_i\cap U_j)$ is the
restriction of an element of $G$. Let $M$ and $N$ be manifolds with
$(X,G)$-structures. A map $f:M\to N$ is an \emph{$(X,G)$-map} if,
for each pair of charts $\phi_i:U_i\to X$ and $\psi_j:V_j\to X$ for
$M$ and $N$, respectively, the restriction of the composition
$\psi_j\circ f\circ\phi_i^{-1}$ to each component of $\phi_i(U_i\cap
f^{-1}(V_j))$ is the restriction of an element of $G$.

Let $M$ be a manifold with $(X,G)$-structure. Let $p:\tilde{M}\to M$
be the universal covering space of $M$ and identify $\pi_1(M)$ with
the group of covering transformations. Then there is a unique
$(X,G)$-structure on $\tilde{M}$ for which $p$ is an $(X,G)$-map.
Furthermore, the Development Theorem (see \cite{gold-lec}) says that
there exists a pair $(dev,\rho)$ where $dev:\tilde{M}\to X$ is an
$(X,G)$-map and $\rho:\pi_1(M)\to G$ is a homomorphism such that
$$
dev\circ\gamma=\rho(\gamma)\circ dev
$$
for each $\gamma\in\pi_1(M)$. If $(dev',\rho')$ is another such
pair, there exists $g\in G$ such that $dev'=g\circ dev$ and
$\rho'(\gamma)=g\rho(\gamma)g^{-1}$ for each $\gamma\in\pi_1(M)$.

A \emph{real projective structure} is an $(X,G)$-structure where $X$
is the real projective space $\R\textup{P}^n$ and $G$ is the group
$\textup{Aut}(\R\textup{P}^n)$ of projective automorphisms. The
universal cover $\S^n$ of $\R\textup{P}^n$ is called the
\emph{projective $n$-sphere} and its group $\textup{Aut}(\S^n)$ of
projective automorphisms is isomorphic to the group
$SL^{\pm}(n+1,\R)$ of real matrices of determinant $\pm1$. A real
projective structure can also be defined as a
$(\S^n,\textup{Aut}(\S^n))$-structure (see \cite[Exercise
4.5]{gold-lec}). For the sake of convenience, we shall adopt the
latter as our definition of real projective structures.

Let $M$ be a \emph{real projective $n$-manifold}, that is, a
manifold with a real projective structure. If the developing map
$$
dev:\tilde{M}\to\S^n
$$
is an embedding onto a convex (resp. properly convex) domain
$\Omega\subset\S^n$ (see Section~\ref{sec:prelim}), then the
structure on $M$ is said to be \emph{convex} (resp. \emph{properly
convex}) and the manifold $M$ is called a \emph{convex} (resp.
\emph{properly convex}) real projective manifold.

Convex real projective structures enjoy some nice properties, which
we explain as follows. Let $M$ be a convex real projective
$n$-manifold. Then $M$ is isomorphic to the quotient
$\Omega/\Gamma$, where $\Omega\subset\S^n$ is a convex domain and
$\Gamma\subset\textup{Aut}(\S^n)$ is a discrete subgroup acting
freely and properly discontinuously on $\Omega$. In particular, the
fundamental group of $M$ is identified with $\Gamma$ and hence
linear. Furthermore, because $\Omega$ is convex, the universal cover
of $M$ is contractible and any two points $x$ and $y$ of $M$ can be
connected by a line segment which is the projection of a line
segment in $\Omega$ connecting a lift $\tilde{x}\in\Omega$ of $x$ to
a lift $\tilde{y}\in\Omega$ of $y$. This property resembles the
notion of geodesic completeness of Riemannian metrics. For this
reason, convex real projective structures can be regarded as natural
analogues of complete Riemannian metrics.

Properly convex real projective structures are expected to resemble
non-positively curved metrics. For example, Benoist \cite{benoist,
benoist4} showed the followings: Let $M$ be a compact properly
convex real projective $n$-manifold. As above, represent $M$ as the
quotient $M=\Omega/\Gamma$, where $\Omega\subset\S^n$ is a properly
convex domain and $\Gamma\subset\textup{Aut}(\S^n)$ acts on $\Omega$
cocompactly. Then $\Omega$ is strictly convex if and only if
$\Gamma$ is Gromov-hyperbolic. (Here, strict convexity of $\Omega$
means that the boundary $\partial\Omega$ does not contain any open
line segment.) Furthermore, if $n=3$ and $\Omega$ is neither
strictly convex nor reducible, then $M$ admits the JSJ-decomposition
along embedded tori into hyperbolic pieces. In particular, such $M$
admits a Riemannian metric of non-positive curvature (see
\cite{leeb}).

\subsection{Obtaining real projective manifolds} \label{sec:manifold}

In this section, we present a version of the Poincar\'e fundamental
polyhedron theorem for real projective structures, which will
complement our main theorem in Section~\ref{ssec:convexity-theorem}.

Let $\mathcal{P}$ be a finite family of $n$-polytopes in $\S^n$.
Denote by $\Sigma$ the collection of all facets of the polytopes in
$\mathcal{P}$. A \emph{projective facet-pairing} for $\mathcal{P}$
is a set
$$
\Phi=\{\phi_\s\in\textup{Aut}(\S^n)\,|\,\s\in\Sigma\}
$$
of elements of $\textup{Aut}(\S^n)$ indexed by $\Sigma$ such that
\begin{itemize}
\item
for each facet $\s$ of $P\in\mathcal{P}$ there is a facet $\s'$ of
$P'\in\mathcal{P}$ such that $\phi_\s(\s)=\s'$;
\item
the polytopes $\phi_\s(P)$ and $P'$ are situated so that
$\phi_\s(P)\cap P'=\s'$;
\item
the maps $\phi_\s$ and $\phi_{\s'}$ satisfy the relation
$\phi_{\s'}=\phi_\s^{-1}$.
\end{itemize}

Let $\Phi$ be a projective facet-pairing for $\mathcal{P}$. Then
$\Phi$ induces an equivalence relation on the disjoint union
$\Pi=\bigsqcup_{P\in\mathcal{P}}P$. The corresponding quotient space
$M$ of $\Pi$ is said to be obtained by gluing together the polytopes
of $\mathcal{P}$ by $\Phi$. Let $M'$ denote the space $M$ removed
with its cells of codimension $\ge2$. The space $M'$ has a natural
structure of a real projective orbifold, which is a manifold
provided that $\phi_{\s}(\s)\ne\s$ for every facet $\s\in\Sigma$.
While the following discussion has a straightforward generalization
in the context of real projective orbifolds, we assume, for
simplicity, that $M'$ is a real projective manifold.

In what follows, we shall obtain a necessary condition for the real
projective structure on $M'$ to extends to $M$ and for the space $M$
to be a real projective manifold.

For this purpose, note first that the equivalence relation on $\Pi$
also induces an equivalence relation on the collection of ridges of
the polytopes in $\mathcal{P}$. More precisely, let $e:=e_1$ be a
ridge of $P_1\in\mathcal{P}$. Choose a facet $\s_1$ of $P_1$
containing $e_1$. Then there is a facet $\s_1'$ of
$P_2\in\mathcal{P}$ such that $\phi_{\s_1}(\s_1)=\s_1'$. Let
$e_2=\phi_{\s_1}(e_1)$ and let $\s_2$ be the facet of $P_2$ other
than $\s_1'$ which contains $e_2$. Then there is a facet $\s_2'$ of
$P_3\in\mathcal{P}$ such that $\phi_{\s_2}(\s_2)=\s_2'$. Continuing
in this manner, we obtain a sequence $\{e_i\}$ of ridges, a sequence
$\{P_i\}$ of polytopes, a sequence $\{\phi_{\s_i}\}$ of
facet-pairing transformations, and a sequence $\{\s_i, \s_i'\}$ of
pairs of facets. Because the family $\mathcal{P}$ is finite and
there are only finitely many ridges in a polytope, the sequence of
ridges is periodic and hence all four sequences are periodic. Let
$r$ be the least common period of these four sequences. Note that
the period $r$, as well as the two conditions we shall consider
below, are independent of our choice above between $\s_1$ and
$\s_1'$.

We set $h(e)=\phi_{\s_r}\circ\cdots\circ\phi_{\s_1}$ and consider
the following sequence of polytopes in $\S^n$
$$
P_1,\;
\phi_{\s_1}^{-1}(P_2),\;\phi_{\s_1}^{-1}\phi_{\s_2}^{-1}(P_3),\;\cdots,\phi_{\s_1}^{-1}\phi_{\s_2}^{-1}\cdots\phi_{\s_{r-1}}^{-1}(P_r).
$$
Observe that all polytopes in the sequence share the ridge $e$ in
common and each successive polytopes are adjacent. Thus, if we put
the standard Riemannian metric on $\S^n$ and consider the link
$\L(e;P)$ for each polytope $P$ in the above sequence, then we
obtain a sequence $\{\alpha_i\}$ of segments in
$\S^1=L(e)^\bot\subset\S^n$. Let
$\L(e)=(\alpha_1\sqcup\cdots\sqcup\alpha_r)/_\sim$ denote the
natural identification space of these segments.

Now, for the space $M$ to be a real projective manifold, it is
necessary that, for each ridge $e$, we have $h(e)=id$ and the
isometry $L(e)=\S^1$. It turns out that these conditions are also
sufficient. The proof of the following proposition is analogous to
the usual proofs of the Poincar\'e fundamental polyhedron theorem
for constant curvature Riemannian metrics (see, for example,
\cite{ep} and \cite{rat}) and we omit it.

\begin{proposition} \label{prop:poincare}
Let $\mathcal{P}$ be a finite family of $n$-polytopes in $\S^n$. Let
$\Phi$ be a projective facet-pairing for $\mathcal{P}$. Let $M$ be
the space obtained by gluing together the polytopes of $\mathcal{P}$
by $\Phi$. Then $M$ is a real projective manifold provided that, for
each ridge $e$ of a polytope in $\mathcal{P}$, we have
\renewcommand{\labelenumi}{\textup{(\arabic{enumi})}}
\begin{enumerate}
\item
$h(e)=id$;
\item
$\L(e)$ is isometric to the unit circle $\S^1$.
\end{enumerate}
\end{proposition}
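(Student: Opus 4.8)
The statement is the Poincar\'e fundamental polyhedron theorem for real projective structures, and the plan is the classical one. Since $M'$ (the complement of the codimension-$\ge2$ skeleton) is already a real projective manifold, it suffices to produce, around every point lying in a cell of codimension $k\ge2$, a chart into $\S^n$ compatible with the charts of $M'$; one does this by developing the polytopes of $\mathcal{P}$ that surround the cell. The whole argument is a double induction: on the dimension $n$, and, for fixed $n$, on the codimension of the cell, with ridges (codimension $2$) treated first.

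\emph{Codimension $2$} (this is also the base case $n=2$, where ridges are vertices). Let $e=e_1$ be a ridge, and let $P_1,\dots,P_r$, $\s_i,\s_i'$, $\phi_{\s_i}$ be the associated ridge cycle as set up just before the statement, with $h(e)=\phi_{\s_r}\circ\cdots\circ\phi_{\s_1}$. Put $D_1=id$ and $D_{i+1}=D_i\circ\phi_{\s_i}^{-1}$, so that the $i$-th member of the sequence $P_1,\;\phi_{\s_1}^{-1}(P_2),\;\phi_{\s_1}^{-1}\phi_{\s_2}^{-1}(P_3),\;\dots$ is $D_i(P_i)$. By the defining properties of a projective facet-pairing, consecutive $D_i(P_i)$ meet exactly along a common facet and lie on opposite local sides of it, and all of them contain the ridge $e_\S=D_1(e_1)$. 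Hypothesis (1), $h(e)=id$, forces $D_{r+1}=D_1$, so the chain closes up consistently around $e_\S$; hypothesis (2), $\L(e)\cong\S^1$, says precisely that the link-arcs $\L(e_\S;D_i(P_i))$ in $L(e_\S)^{\bot}\cong\S^1$, laid end to end, sweep out $\S^1$ exactly once — with no overlap beyond the shared facets and no gap. Hence $\bigcup_{i=1}^{r}D_i(P_i)$ is a full neighborhood of $(e_\S)^{\circ}$ in $\S^n$, tiled with pairwise disjoint interiors, and the resulting map onto a neighborhood of $\pi(e_1^{\circ})$ in $M$ is a homeomorphism; inverting it gives a chart, manifestly an $(\S^n,\textup{Aut}(\S^n))$-map relative to $M'$. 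In particular $M$ is a manifold along its ridges.

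\emph{Codimension $k\ge3$.} Pass to links. The glued link $\Lambda_f=\bigsqcup_{i\in\mathcal{I}(f)}\L(f_i;P_i)/\sim_f$ of a codimension-$k$ cell $f$ is again an instance of the proposition's setup, in dimension $k-1$: its pieces are the $(k-1)$-polytopes $\L(f_i;P_i)\subset\S^{k-1}$, and each $\phi_\s$ with $f\subset\s$ induces on the relevant orthogonal slice $\cong\S^{k-1}$ an element of $\textup{Aut}(\S^{k-1})$, the collection of which is readily checked to be a projective facet-pairing $\Phi_f$. Its ridges are the images $\bar e$ of the ridges $e$ of $M$ with $f\subset e$, and by the link identity \eqref{links2} (in the form \eqref{isom'}) one has $\L(\bar e;\Lambda_f)=\L(e;\cdot)$, so hypothesis (2) for $e$ gives hypothesis (2) for $\bar e$; likewise the cycle transformation of $\bar e$ is the one induced by $h(e)=id$, hence trivial. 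By the inductive hypothesis $\Lambda_f$ is a real projective $(k-1)$-manifold, and, by the codimension-$2$ step applied inside $\Lambda_f$, its points on codimension-$2$ cells have Euclidean neighborhoods; the usual argument (removing the codimension-$\ge2$ skeleton and killing the small loops around codimension-$2$ cells, which now bound disks) shows $\Lambda_f$ is simply connected, so its developing map is a covering of the simply connected $\S^{k-1}$ ($k-1\ge2$), hence an isometry $\Lambda_f\cong\S^{k-1}$. Coning this off gives a ball-chart around $\pi(f^{\circ})$ into $\S^n$. This completes the induction; $M$ then carries the $(\S^n,\textup{Aut}(\S^n))$-structure and is a manifold, and the Development Theorem \cite{gold-lec} supplies the associated $dev$ and $\rho$.

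The only genuinely non-routine point is the codimension-$2$ analysis: reading hypotheses (1) and (2) as ``trivial monodromy'' and ``the ridge links tile $\S^1$ exactly once,'' and deducing that the developed polytopes around the ridge form a genuine embedded open neighborhood — no overlap, no gap, no multiple covering. The remaining work is bookkeeping: verifying that $\Phi_f$ is a bona fide projective facet-pairing of the link-polytopes, that hypotheses (1)--(2) descend along links via Lemma~\ref{links1} and \eqref{isom'}, and ordering the double induction so that codimension-$2$ cells are handled before the higher-codimension ones in each dimension.
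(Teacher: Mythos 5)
The paper itself omits the proof, referring to Epstein--Petronio and Ratcliffe, whose arguments are \emph{global}: one builds the abstract ``tiling space'' from words in the facet-pairings, shows that the evaluation map to $\S^n$ is a covering, and invokes the simple connectivity of the ambient model space $\S^n$. Your proof is instead a \emph{local} chart-by-chart argument by double induction on codimension, reducing everything to links. Your codimension-$2$ step is correct and is exactly the right reading of hypotheses (1) and (2). However, the codimension-$\ge3$ step has a genuine gap, and it is precisely at the point where the global structure of the classical proof is doing real work.

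The gap is the assertion that the link $\Lambda_f$ of a codimension-$k$ cell is simply connected. The parenthetical justification --- ``removing the codimension-$\ge2$ skeleton and killing the small loops around codimension-$2$ cells, which now bound disks'' --- does not prove simple connectivity. It merely records the standard presentation of $\pi_1$ of any manifold with a polytopal decomposition: $\pi_1(\Lambda_f)$ is the quotient of $\pi_1(\Lambda_f \setminus K)$ (a free group on the dual $1$-skeleton) by the normal closure of small loops around codimension-$2$ cells. That quotient is not trivial in general; saying that these small loops bound disks is a tautology once one knows $\Lambda_f$ is a manifold near its ridges, and it tells you nothing about the image. Concretely, your induction only yields that $\Lambda_f$ is a closed real projective $(k-1)$-manifold whose universal cover develops onto $\S^{k-1}$; hence $\Lambda_f \cong \S^{k-1}/\Gamma$ for some finite group $\Gamma$ acting freely and projectively. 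The ridge hypotheses (1) and (2) guarantee that $\Gamma$ has no element fixing a codimension-$2$ direction, but they do not by themselves exclude a $\Gamma$ acting freely on the link sphere (for instance an antipodal involution when $k-1=2$, giving $\Lambda_f\cong\mathbb{RP}^2$, or a lens-space quotient when $k-1=3$). Ruling out such $\Gamma$ is exactly what the global covering argument accomplishes: one shows that the developed tiles cover $\S^n$ and that this covering is trivial because $\S^n$ --- not each link --- is simply connected, and only then reads off that each vertex stabilizer in the resulting groupoid is trivial. A purely local induction on links cannot see this; you need either to run the tiling-space argument or to supply a separate argument for why the holonomy $\pi_1(\Lambda_f)\to\textup{Aut}(\S^{k-1})$ is trivial.

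A smaller point, not by itself fatal but worth noting: your inductive hypothesis produces $\Lambda_f$ as a \emph{real projective} $(k-1)$-manifold, i.e.\ one with a $(\S^{k-1},\textup{Aut}(\S^{k-1}))$-structure, not a priori a spherical space form. The passage ``compact $\Rightarrow$ the developing map is a covering'' is fine once one knows it is defined on $\Lambda_f$ rather than only on $\widetilde{\Lambda_f}$, but this again presupposes the simple connectivity that is in question, or else a completeness argument (pulling back the round metric to $\widetilde{\Lambda_f}$ and checking completeness) that needs to be spelled out in the projective setting.
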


\subsection{Convexity theorem for real projective structures}
\label{ssec:convexity-theorem}

We are now ready to prove Theorem~\ref{introthm:main}. To apply the
results obtained in Section~\ref{sec:convexity} and
Section~\ref{sec:pconvexity} more conveniently, however, we prove
the following equivalent theorem which is stated in terms of
$(\S^n,\textup{Aut}(\S^n))$-structures.

\begin{theorem} \label{thm:mmain}
Let $\mathcal{P}$ be a finite family of $n$-polytopes in the
projective $n$-sphere $\S^n$. Let
$\Phi=\{\phi_\s\in\textup{Aut}(\S^n)\,|\,\s\in\Sigma\}$ be a
projective facet-pairing for $\mathcal{P}$, where $\Sigma$ is the
collection of all facets of the polytopes in $\mathcal{P}$. Let $M$
be a real projective $n$-manifold obtained by gluing together the
polytopes in $\mathcal{P}$ by $\Phi$. Assume the following
condition:
\begin{quote}
for each facet $\s$ of $P\in\mathcal{P}$, if $\s'$ is a facet of
$P'\in\mathcal{P}$ such that $\phi_\s(\s)=\s'$, then the union
$\phi_\s(P)\cup P'$ is a convex subset of $\S^n$.
\end{quote}
Then the following assertions are true:
\renewcommand{\labelenumi}{\textup{(\Roman{enumi})}}
\begin{enumerate}
\item
If $\mathcal{P}$ contains no triangular polytope, then $M$ is a
convex real projective manifold;
\item
If, in addition, $\mathcal{P}$ contains a polytope $P$ whose dual
$P^*$ is thick, then $M$ is a properly convex real projective
manifold.
\end{enumerate}
\end{theorem}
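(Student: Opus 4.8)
The plan is to turn the universal cover $\tilde M$ into an $n$-complex with empty boundary in the sense of Definition~\ref{def:complex}, observe that the convexity hypothesis on $\Phi$ is precisely residual convexity of this complex, and then invoke Corollary~\ref{cor:notriangle} and Theorem~\ref{thm:pconvexity}. First I would fix a developing pair $(dev,\rho)$ for the given real projective structure, with $dev:\tilde M\to\S^n$ and $\rho:\pi_1(M)\to\textup{Aut}(\S^n)$, and set $X=\tilde M$. Lifting the tessellation of $M$ by the images of the polytopes of $\mathcal{P}$ gives a cell decomposition of $X$ into copies of the $P\in\mathcal{P}$. Since $\mathcal{P}$ is finite and $\Phi$ pairs every facet, $M$ is compact without boundary, so this decomposition is locally finite, $X$ is a simply-connected $n$-manifold with $\partial X=\emptyset$, and $dev$ is a local diffeomorphism onto $\S^n$ (this last point uses that $M$ is a \emph{manifold}, equivalently that the two ridge conditions of Proposition~\ref{prop:poincare} are satisfied). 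Pulling the round metric of $\S^n$ back through $dev$ then makes the interior of $X$ locally isometric to $\S^n$; and because in these pulled-back metrics each facet-gluing map becomes an identity map between two isometric facets of $\S^n$, the gluings are isometries. Hence $X$ is an $n$-complex with empty boundary, and $dev$ is its developing map.

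Next I would check condition (3) of Lemma~\ref{lem:residue}. Let $\s$ be a facet of $X$, lying between two $n$-cells $P_1,P_2$, so $res(\s)=P_1\cup P_2$. Tracing through the gluing, the restriction of $dev$ to $res(\s)$ is, up to post-composition with an element of $\textup{Aut}(\S^n)$, the map carrying $P_1$ onto $\phi_\s(P)$ and $P_2$ onto $P'$, where $\phi_\s(\s)=\s'$ as in the hypothesis. The hypothesis says $\phi_\s(P)\cup P'$ is convex, and elements of $\textup{Aut}(\S^n)$ preserve convexity of subsets of $\S^n$ (being linear on $\R^{n+1}$); moreover $\phi_\s(P)\cap P'=\s'$ by the definition of a projective facet-pairing, and each of $\phi_\s(P)$, $P'$ lies in an open hemisphere, so their union is a proper subset of $\S^n$. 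Therefore $dev$ restricted to $res(\s)$ is injective with compact, proper, convex image, i.e.\ $res(\s)$ is a convex $n$-polyball. By Lemma~\ref{lem:residue}(3), $X$ is residually convex.

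Now whether an $n$-cell of $X$ is triangular, and whether its dual is thick, depends only on which polytope of $\mathcal{P}$ it is a copy of. For assertion (I): if no $P\in\mathcal{P}$ is triangular, then no $n$-cell of $X$ is triangular, so Corollary~\ref{cor:notriangle} gives that $dev$ is an isometric embedding of $X$ onto a convex proper domain $\Omega\subsetneq\S^n$. Since $dev$ is a $\rho$-equivariant homeomorphism onto $\Omega$, the free properly discontinuous deck action of $\pi_1(M)$ on $X$ transports to a free properly discontinuous action of $\rho(\pi_1(M))$ on $\Omega$, and the induced homeomorphism $M\cong\Omega/\rho(\pi_1(M))$ is an isomorphism of real projective manifolds; hence $M$ is a convex real projective manifold. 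For assertion (II): if in addition some $P\in\mathcal{P}$ has thick dual, the corresponding $n$-cell $Q$ of $X$ has $Q^*$ thick, so Theorem~\ref{thm:pconvexity} strengthens the conclusion to $\Omega$ being properly convex, and the same descent shows $M$ is a properly convex real projective manifold.

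The substantive work has all been done in Sections~\ref{sec:convexity}--\ref{sec:pconvexity}; the main obstacle in the present argument is purely a matter of care: identifying the hypothesis ``$\phi_\s(P)\cup P'$ is convex'' with condition (3) of Lemma~\ref{lem:residue} for $X=\tilde M$, which requires one to be consistent about which projective chart develops each of two adjacent cells (so that $dev(res(\s))$ is genuinely projectively equivalent to $\phi_\s(P)\cup P'$), and verifying that $\tilde M$ with the pulled-back metric really does satisfy all four axioms of an $n$-complex with empty boundary, which rests on $M$ being compact and a manifold.
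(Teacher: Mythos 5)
Your proposal follows exactly the same route as the paper: endow $\tilde{M}$ with the pulled-back round metric to make it an $n$-complex, identify the hypothesis on $\Phi$ with condition (3) of Lemma~\ref{lem:residue} (residual convexity), and then cite Corollary~\ref{cor:notriangle} and Theorem~\ref{thm:pconvexity}. The paper's proof is essentially a one-paragraph version of yours that leaves the axiom-checking and the $\rho$-equivariant descent to $M$ implicit; your more careful verification of the $n$-complex axioms and of the projective equivalence $dev(res(\s))\cong\phi_\s(P)\cup P'$ is the intended content of that paragraph.
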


\begin{proof}
Let $dev:\tilde{M}\to\S^n$ be the associated developing map of the
universal covering space $\tilde{M}$ of $M$. Regard the projective
sphere $\S^n$ as the standard Riemannian sphere and pullback the
Riemannian metric to $\tilde{M}$ via $dev$. Then the above condition
on the facet-pairing for $\mathcal{P}$ and the assumption that $M$
is a real projective $n$-manifold, imply that $\tilde{M}$ is a
residually convex $n$-complex (as defined in
Definition~\ref{def:complex} and Definition~\ref{def:rconvexity}).
Now the conclusions of the theorem follow immediately from
Corollary~\ref{cor:notriangle} and Theorem~\ref{thm:pconvexity}.
\end{proof}

\begin{remark} \label{rem:vin-kap}
(1) It is not difficult to see that an orbifold version of
Theorem~\ref{thm:mmain} is also true.

(2) Let $P\subset\S^n$ be an $n$-polyhedron (which is not
necessarily a polytope). Suppose that
$\Gamma\subset\textup{Aut}(\S^n)$ is a group generated by
(projective) reflections in the hyperplanes spanned by facets of
$P$. In \cite{vinberg} Vinberg provided necessary and sufficient
conditions for $\Gamma$ to be a discrete subgroup with fundamental
domain $P$. In such case, he also showed that the orbit
$\Gamma(P)\subset\S^n$ of $P$ under $\Gamma$ is a convex subset and
$\Gamma$ acts properly discontinuously on the interior $\Omega$ of
$\Gamma(P)$.

It is easy to see that gluing by reflections necessarily gives rise
to residually convex structures. Thus, in some special cases, our
result provides another proof that the domain $\Omega$ above is
convex. Namely, if $P$ is a non-triangular $n$-polytope, if $\Gamma$
is known to be discrete, and if all stabilizer subgroups of points
of $P$ are finite, then $\Gamma(P)=\Omega$ is a residually convex
$n$-complex without triangular polytopes and hence
Corollary~\ref{cor:notriangle} applies.

On the other hand, because our gluing maps are not necessarily
reflections, our results do cover complementary part of Vinberg's
convexity assertion. For instance, it is well-known that
cocompact/cofinite hyperbolic reflection groups are non-existent in
higher dimensions. More generally, a similar non-existence assertion
is also true for cocompact (projective) reflection groups acting on
strictly convex domains (see \cite{js1}).

(3) In his paper \cite{kapovich}, after producing real projective
structures on Gromov-Thurston manifolds, Kapovich showed that these
structures are in fact convex. There he deals with polyhedral
complexes which are similar to our residually convex $n$-complexes.
But, because the polyhedra he considers have infinitely many facets,
his complexes are assumed to satisfy more properties than residual
convexity and are rather complicated to describe. His proof modifies
Vinberg's arguments and applies small cancelation theory to the
$2$-skeleton of the dual complexes.
\end{remark}

\medskip
Department of Mathematics,

University of California,

Davis, CA 95616, USA,

\medskip
\verb"zlee@math.ucdavis.edu"

\end{document}